\definecolor{navy}{rgb}{0,0,.5}
\definecolor{darkred}{rgb}{1,0,0}
\newcommand{\llb}{\ensuremath{\llbracket}}
\newcommand{\rrb}{\ensuremath{\rrbracket}}
\newcommand{\llpar}{(\negthinspace(}
\newcommand{\rrpar}{)\negthinspace)}
\def\KK{K}
\def\Ka{\overline{K}}
\def\Ra{\overline{R}}
\def\ka{\overline{k}}
\def\CC{{\mathbb C}}
\def\RR{{\mathbb R}}
\def\R{{\mathbb R}}
\def\ZZ{{\mathbb Z}}
\def\Z{{\mathbb Z}}
\def\QQ{{\mathbb Q}}
\def\Q{{\mathbb Q}}
\def\AA{{\mathbb A}}
\def\PP{{\mathbb P}}
\def\TT{{\mathbb T}}
\def\GG{{\mathbb G}}
\def\red{\operatorname{red}}
\def\Spec{\operatorname{Spec}}
\def\Hom{\operatorname{Hom}}
\def\trop{\operatorname{trop}}
\def\Star{\operatorname{Star}}
\def\cU{{\mathcal U}}
\def\cV{{\mathcal V}}
\def\cW{{\mathcal W}}
\def\cX{{\mathcal X}}
\def\cY{{\mathcal Y}}
\def\cZ{{\mathcal Z}}
\def\fU{{\mathfrak U}}
\def\fX{{\mathfrak X}}
\def\fY{{\mathfrak Y}}
\def\fZ{{\mathfrak Z}}
\def\<{{\langle}}
\def\>{{\rangle}}
\DeclareMathOperator{\val}{val}
\DeclareMathOperator{\Gal}{Gal}
\newcommand{\ignore}[1]{}
\numberwithin{equation}{subsection}
\newtheorem{thm}[equation]{Theorem}
\newtheorem{theorem}[equation]{Theorem}
\newtheorem{prop}[equation]{Proposition}
\newtheorem{lem}[equation]{Lemma}
\newtheorem{lemma}[equation]{Lemma}
\newtheorem{cor}[equation]{Corollary}
\newtheorem*{thm*}{Theorem}
\newtheorem*{conjA}{Conjecture A}
\newtheorem*{conjB}{Conjecture B}
\theoremstyle{definition}
\newtheorem{definition}[equation]{Definition}
\newtheorem{exam}[equation]{Example}
\theoremstyle{remark}
\newtheorem{rem}[equation]{Remark}
\newcommand{\spe}{\mathrm{sp}}
\newcommand{\VF}{\mathrm{VF}}
\newcommand{\RES}{\mathrm{RES}}
\newcommand{\Var}{\mathrm{Var}}
\newcommand{\Vol}{\mathrm{Vol}}
\newcommand{\gal}{\widehat{\mu}}
\newcommand{\gro}{\mathbf{K}}
\def\Q{\mathbb{Q}}
\def\LL{{\mathbb L}}
\begin{document}
\title[A tropical motivic Fubini theorem]{A tropical motivic Fubini theorem with applications to Donaldson-Thomas theory}

\author{Johannes Nicaise}
\author{Sam Payne}
\address{Johannes Nicaise, Imperial College,
Department of Mathematics, South Kensington Campus,
London SW72AZ, UK, and KU Leuven, Department of Mathematics, Celestijnenlaan 200B, 3001 Heverlee, Belgium} \email{j.nicaise@imperial.ac.uk}
\address{Sam Payne, Mathematics Department, Yale University, New Haven, CT 06511,USA}
\email{sam.payne@yale.edu}

\begin{abstract}
We present a new tool for the calculation of Denef and Loeser's motivic nearby fiber and motivic Milnor fiber: a motivic Fubini theorem for the tropicalization map, based on Hrushovski and Kazhdan's theory of motivic volumes of semi-algebraic sets. As applications, we prove a conjecture of Davison and Meinhardt on motivic nearby fibers of weighted homogeneous polynomials, and give a very short and conceptual new proof of the integral identity conjecture of Kontsevich and Soibelman, first proved by L\^e Quy Thuong.  Both of these conjectures emerged in the context of motivic Donaldson-Thomas theory.
 \end{abstract}

\maketitle

\section{Introduction}

Let $k$ be a field of characteristic zero that contains all roots of unity.
Denef and Loeser's motivic nearby fiber, motivic vanishing cycles, and motivic Milnor fiber are subtle invariants of hypersurface singularities over $k$.
 They were defined as elements of $\mathcal{M}^{\gal}_k=\gro^{\gal}(\Var_k)[\LL^{-1}]$, the Grothendieck ring of $k$-varieties with an action of the profinite group $\gal$ of roots of unity, localized
 with respect to the class $\LL$ of the affine line (or a suitable relative variant of this ring) \cite[3.5.3]{DL}.
 These invariants should be viewed as motivic incarnations of the
 nearby and vanishing cycles complexes and the topological Milnor fiber, respectively, where the $\gal$-action reflects the monodromy.
 They play a central role in various applications in birational geometry and singularity theory, for instance
 in the calculation of the Hodge spectrum \cite{GLM}. They are also central tools in motivic Donaldson-Thomas theory, where the motivic vanishing cycles and the motivic Milnor fiber appear as geometric upgrades of the virtual Euler characteristic and the Behrend function.

\subsection{The tropical motivic Fubini theorem}  We present a new tool for the calculation of these invariants, based on tropical geometry and Hrushovski and Kazhdan's theory of motivic integration \cite{HK}.  The motivic vanishing cycles are defined by subtracting the class of the hypersurface from the motivic nearby fiber, so we restrict our attention to the other two invariants. The theory of Hrushovski and Kazhdan assigns to every semi-algebraic set $S$ over the field $K_0=k\llpar t \rrpar$ a motivic volume $\Vol(S)$ in $\gro^{\gal}(\Var_k)$, and we give natural interpretations of the motivic nearby fiber and the motivic Milnor fiber as motivic volumes of semi-algebraic sets (Corollary~\ref{cor:compar}).    One advantage of this approach is that the invariants are well-defined already in $\gro^{\gal}(\Var_k)$, without inverting $\LL$ (see Remark \ref{rem:compar}).  Another more striking advantage is that we can use semi-algebraic decompositions of these semi-algebraic sets to compute their motivic volumes, and thereby exploit natural connections to tropical geometry.  In particular, we present a new method to compute such motivic volumes: a motivic Fubini theorem for the tropicalization map (Theorem~\ref{thm:fubini}), which we state as follows.

\begin{thm*}
 Let $Y$ be a $\KK_0$-variety.
Let $n$ be a positive integer and let $S$ be a semi-algebraic subset of $\mathbb{G}^n_{m,\KK_0}\times_{\KK_0}Y$.
 Denote by $$\pi:\mathbb{G}^n_{m,\KK_0}\times_{\KK_0}Y\to \mathbb{G}^n_{m,\KK_0}$$ the projection morphism.
 Then the function
 $$(\trop \circ \pi)_*\mathbf{1}_S:\Q^n\to \gro^{\gal}(\Var_k):w\mapsto \Vol(S\cap (\trop\circ \pi)^{-1}(w))$$
 is constructible, and
 $$\Vol(S)=\int_{\Q^n}(\trop\circ \pi)_*\mathbf{1}_S \,d\chi'$$ in
 $\gro^{\gal}(\Var_k)$.
\end{thm*}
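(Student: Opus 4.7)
The plan is to apply the Hrushovski--Kazhdan cell decomposition for semi-algebraic sets over $\KK_0$ in order to reduce $S$ to a finite disjoint union of pieces whose tropicalization fibers have uniform motivic volume, and then verify the formula piece by piece. Both sides of the identity are additive under finite disjoint semi-algebraic decompositions of $S$ (the left by additivity of $\Vol$, the right by additivity of integration against $\chi'$ once constructibility is in place on each piece), so, granting constructibility cell-by-cell, one is reduced to the case of a single standard cell.

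Concretely, the key step is to invoke the structure theorem of Hrushovski--Kazhdan to decompose $S$, up to a motivic-volume-preserving semi-algebraic bijection, into cells of the form
\[
C_\alpha = (\trop|_{\GG_{m,\KK_0}^n})^{-1}(\sigma_\alpha) \times_{\KK_0} T_\alpha,
\]
where $\sigma_\alpha$ is a $\Q$-rational (possibly half-open) polytope in $\Q^n$ and $T_\alpha$ is a semi-algebraic subset of $Y$, possibly with residue-field data twisting the torus fiber. On such a cell, the slice over any $w \in \sigma_\alpha$ is $(\trop|_{\GG_{m,\KK_0}^n})^{-1}(w) \times_{\KK_0} T_\alpha$, and its motivic volume is independent of $w \in \sigma_\alpha$; this establishes the constructibility of $w \mapsto \Vol(S \cap (\trop \circ \pi)^{-1}(w))$.

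For such a cell, the claimed identity reduces to checking
\[
\Vol\!\left((\trop|_{\GG_{m,\KK_0}^n})^{-1}(\sigma_\alpha)\right) \,=\, \chi'(\sigma_\alpha) \cdot [\GG_{m,k}]^n,
\]
which is a direct consequence of the definition of the $\chi'$-measure as the bounded Euler characteristic on $\Q$-rational semi-linear sets, together with the Hrushovski--Kazhdan computation of the motivic volume of preimages under $\trop$ of basic rational polytopes. Multiplying by $\Vol(T_\alpha)$ on each cell and summing over $\alpha$ yields the Fubini formula for $S$.

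The main obstacle will be arranging the cell decomposition so that it is compatible with the projection $\pi$ onto $\GG_{m,\KK_0}^n$: one needs the base polytopes $\sigma_\alpha$ to lie in $\Q^n$ rather than in a divisible hull of the value group, and one needs the cells on which the fiberwise motivic volume is constant to align precisely with the tropicalization fibers of $\trop \circ \pi$. This should follow from a careful application of the Hrushovski--Kazhdan structure theorem together with a refinement argument, but the bookkeeping — in particular, handling the interaction between the semi-algebraic structure on the $Y$-factor and the polyhedral structure on the torus factor, and keeping track of the $\widehat{\mu}$-action coming from the angular components — is where the real technical work will live.
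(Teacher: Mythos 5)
Your proposed reduction is not the one the paper uses, and as stated it has a genuine gap. The Hrushovski--Kazhdan structure theorem does allow you to decompose $S$, \emph{up to motivic-volume-preserving semi-algebraic bijections}, into standard pieces; but those bijections do not commute with $\trop\circ\pi$. So even after invoking that theorem you cannot conclude anything about the fiberwise volumes $\Vol\bigl(S\cap(\trop\circ\pi)^{-1}(w)\bigr)$: the structure theorem computes the class $[S]\in\gro(\VF_{\KK_0})$, but the Fubini statement is a strictly finer assertion about the fibration over $\Q^n$, and the reshuffling bijections destroy it. Moreover, as a set-theoretic decomposition (without bijections), cells of the proposed product form $(\trop|_{\GG^n_m})^{-1}(\sigma_\alpha)\times T_\alpha$ are far from sufficient. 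For instance, with $n=1$ and $Y=\GG_{m,\KK_0}$, the set $S=\{(x,y):\val(x)=\val(y)\}$ is not a finite disjoint union of such rectangles. You do flag the interaction between the $Y$-factor and the torus factor as ``where the real technical work will live,'' but this is not bookkeeping that can be pushed through a refinement argument: it requires a different organizing idea.

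The paper's argument introduces exactly that idea. After embedding $Y$ in a torus (so one may take $Y=\Spec\KK_0$) and reducing by additivity to a subvariety $X$ of a torus together with a constructible preimage constraint, the proof splits in two. In the sch\"on case (Proposition~\ref{prop:schonfubini}), a $\Q$-admissible tropical fan $\Sigma$ yields a genuine set-theoretic partition of $S$ into pieces $X_\gamma=X(\Ka)\cap\trop^{-1}(\mathring\gamma)$ that are, by construction, compatible with $\trop$; the volume of each $X_\gamma$ is computed as $\chi'(\mathring\gamma)[\inn_\gamma X]$ via an explicit semi-algebraic bijection built from \'etale local models, and Lemma~\ref{lemm:init} identifies the fiber volume as $[\inn_\gamma X]$ on $\mathring\gamma$. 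For the general case, Proposition~\ref{prop:schon} uses Luxton--Qu \cite[7.10]{LQ} on intrinsic torus embeddings to partition $X$ into pieces with sch\"on embeddings into new tori $T'$, together with monomial morphisms $T'\to T$ that restrict to bijections on the pieces. These induce affine linear maps on cocharacter lattices, and the elementary Fubini property of $\chi'$ for affine linear maps (Proposition~\ref{prop:fubini}) then transports the conclusion from $T'$ back to $T$. None of these ingredients --- sch\"onness, tropical fans, initial degenerations, Luxton--Qu, intrinsic tori, and the monomial-morphism bookkeeping --- appear in your proposal, and the HK structure theorem cannot substitute for them because it is not compatible with the tropical fibration.
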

\noindent Here $\trop$ is the tropicalization map and $\chi'$ is the so-called bounded Euler characteristic, the unique additive invariant
 on the Boolean algebra generated by polyhedra in $\Q^n$ that assigns the value $1$ to every closed polyhedron.
 In many situations, one can show that the function $(\trop \circ \pi)_*\mathbf{1}_S$ is constant
 on polyhedral subsets of $\Q^n$ with trivial bounded Euler characteristic. The Fubini theorem then allows us to discard
 the contribution of these pieces to $\Vol(S)$ without knowing anything about the motivic volumes of the fibers of the tropicalization map, which may be difficult to control.

This approach seems to be surprisingly effective.  Indeed, in each of our applications below, we prove the desired identity of motivic volumes by first giving an inclusion of semi-algebraic sets.  We then tropicalize the complement and use a $\GG_m$-action to show that the volumes of the fibers are constant on polyhedral subsets of $\Q^n$ with trivial bounded Euler characteristic.

\subsection{Applications}
We use our motivic Fubini theorem to solve the Davison-Meinhardt conjecture on motivic nearby fibers of weighted homogeneous polynomials \cite[5.5]{DM}.
We also give a very short and conceptual proof of the integral identity conjecture of Kontsevich and Soibelman \cite[\S4.4]{KS}, which was first proved by L\^e Quy Thuong in \cite{thuong}. Both of these conjectures emerged in motivic Donaldson-Thomas theory; let us recall their statements.

\begin{conjA}[Davison-Meinhardt, 2011]
Let $Y$ be a smooth $k$-variety with the trivial $\GG_{m,k}$-action, and let $\GG_{m,k}$ act on $\AA^n_k$ with weights $w_1,\ldots,w_n> 0$.
 Let $$f:\AA^n_k\times_k Y\to \AA^1_k$$ be a $\GG_{m,k}$-equivariant function, where $\GG_{m,k}$ acts on $\AA^1_k$ with weight $d>0$.
 Then the motivic nearby fiber of $f$ is equal to $[f^{-1}(1)]$ in $\mathcal{M}^{\gal}_k$, where the $\gal$-action on $f^{-1}(1)$ factors through $\mu_d(k)$ and is given by
      $$\mu_d(k)\times f^{-1}(1)\to f^{-1}(1):(\zeta,(x_1,\ldots,x_n,y))\mapsto (\zeta^{w_1}x_1,\ldots,\zeta^{w_n}x_n,y).$$
\end{conjA}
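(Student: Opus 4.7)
The plan is to express the motivic nearby fiber as the motivic volume of an explicit semi-algebraic set via Corollary~\ref{cor:compar}, apply the tropical motivic Fubini theorem (Theorem~\ref{thm:fubini}) along the projection to $\GG_m^n$, and use the $\GG_m$-equivariance of $f$ to localize the computation at the single tropical point $v_0 := (w_1/d, \ldots, w_n/d) \in \Q^n$, whose fiber realizes $[f^{-1}(1)]$ with the prescribed $\mu_d$-action.

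By Corollary~\ref{cor:compar}, the motivic nearby fiber equals $\Vol(S)$ for
\[
S \;=\; \{(x, y) \in (\AA^n \times Y)(\cO_{K_0}) \,:\, f(x, y) = t\}.
\]
Stratifying $\AA^n$ by the coordinate vanishing loci, the restriction of $f$ to each stratum remains a $\GG_m$-equivariant weighted homogeneous function of degree $d$ in the surviving coordinates, and $f^{-1}(1)$ stratifies compatibly in a way that respects the $\mu_d$-action. It therefore suffices to prove the open-torus case $\Vol(S^\circ) = [f^{-1}(1) \cap (\GG_m^n \times Y)]$ with the conjectural $\mu_d$-action, where $S^\circ = S \cap (\GG_m^n \times Y)(\cO_{K_0})$. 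Applying Theorem~\ref{thm:fubini} to $S^\circ$ with $\pi : \GG_m^n \times Y \to \GG_m^n$ gives
\[
\Vol(S^\circ) \;=\; \int_{\Q^n} V(v) \, d\chi'(v), \qquad V(v) := \Vol\bigl(S^\circ \cap (\trop \circ \pi)^{-1}(v)\bigr).
\]

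To evaluate $V(v_0)$, pass to the ramified extension $K := K_0(t^{1/d})$ and substitute $x_i = t^{w_i/d} u_i$ with $u_i \in \cO_K^*$. The $\GG_m$-equivariance of $f$ yields $f(x, y) = t \cdot f(u, y)$, so the equation $f(x, y) = t$ becomes $f(u, y) = 1$. This identifies the tropical fiber at $v_0$ with the $\cO_K^*$-unit locus of $f^{-1}(1) \cap (\GG_m^n \times Y)$, whose motivic volume over $K_0$ is $[f^{-1}(1) \cap (\GG_m^n \times Y)]$ equipped with the $\Gal(K/K_0) \cong \mu_d$-action descending from $\zeta \cdot t^{1/d} = \zeta \, t^{1/d}$; up to the identification $\zeta \leftrightarrow \zeta^{-1}$, this matches the action of the conjecture. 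Since $\chi'(\{v_0\}) = 1$, this fiber contributes exactly the claimed expression to the Fubini integral.

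The remaining task, and the main obstacle, is to show that $\int_{\Q^n \setminus \{v_0\}} V \, d\chi' = 0$. The tropicalization of $S^\circ$ is controlled by the piecewise linear function $\alpha(v) := \min_{I \in \supp(f)} \sum v_i I_i$, which decomposes $\Q^n$ polyhedrally according to which subset of $\supp(f)$ attains the minimum. The $\GG_m$-action on $\AA^n \times Y$ --- which under $x_i \mapsto \lambda^{w_i} x_i$ for $\lambda \in K_0^*$ translates tropicalization by $\val(\lambda) \cdot (w_1, \ldots, w_n)$ while rescaling the defining equation by $\lambda^d$ --- combined with automorphisms $t \mapsto u\,t$ of $K_0$ (for $u \in \cO^*$) that absorb this unit factor, is expected to show that $V$ is constant on each open cell of this decomposition; meanwhile positivity of the weights $w_i$ should force every cell other than $\{v_0\}$ to be positive-dimensional and unbounded, hence to have trivial bounded Euler characteristic. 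The technical work lies in verifying both claims uniformly across cells --- in particular, in controlling the higher-order conditions beyond the leading term of $f$ and the degenerate case in which $\supp(f)$ fails to span $\Q^n$.
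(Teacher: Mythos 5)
Your overall framework (Corollary~\ref{cor:compar} to realize the nearby fiber as a motivic volume, Theorem~\ref{thm:fubini} to integrate along the tropicalization, and localization at the single point $v_0 = (w_1/d,\ldots,w_n/d)$) is the right one, and the identification of the fiber at $v_0$ with $[f^{-1}(1)]$ carrying the $\mu_d$-action is correct. But the final paragraph, which carries all the weight, is a sketch rather than a proof, and the specific mechanism you propose to make it work has two genuine problems.

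First, Proposition~\ref{prop:torstable} (constancy of $V$ along lines in the direction of the weight vector) requires the semi-algebraic set to be \emph{stable} under the $\Ka^\times$-action. Your $S^\circ$ is cut out by $f(x,y)=t$; acting by $\lambda$ sends this to $f=\lambda^{-d}t$, so $S^\circ$ is not torus-stable, and translation-invariance is the wrong kind of symmetry here. Second, the automorphisms $t\mapsto ut$ you invoke to ``absorb the unit factor'' only exist for $u\in\cO^*$; these fix the valuation of $t$ and hence do not move tropicalizations at all, so they cannot produce constancy along a line. The correct invariance is conical, not translational: for sets defined over $k$ one can reparametrize $t\mapsto t^q$, which rescales tropicalizations by $q$, and this is exactly the content of Proposition~\ref{prop:fan}, the key tool you are missing.

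The paper makes this work by performing the change of variables $x_i\mapsto t^{-w_i/d}x_i$ \emph{globally}, not just at $v_0$. This turns $\cV(\Ra)$ into a subset of $\cW(\Ka)$, where $\cW = f^{-1}(1)\times_k R$ is a constant family — hence a semi-algebraic set defined over $k$ in the sense of Section~\ref{ss:tropfib} — and reduces the problem to showing that the complement of $\cW(\Ra)$ inside the image has volume zero. That complement tropicalizes into the region $\Gamma=\{v: v_i\geq -w_i/d\ \forall i,\ v_i<0\ \exists i\}$, and Proposition~\ref{prop:fan} gives a complete fan on which $V$ is constant; each intersection $\sigma\cap\Gamma$ with a cone $\sigma$ is easily seen to have $\chi'=0$ because of the ``half-open box minus a point inside a cone'' geometry, forced by the positivity of the $w_i$. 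Without the global change of variables you cannot invoke Proposition~\ref{prop:fan}, and without Proposition~\ref{prop:fan} you have no structural reason why $V$ should be piecewise constant on a \emph{conical} decomposition — which is the precise fact needed to make $\chi'$ kill everything away from $v_0$. You acknowledge these as ``the technical work''; they are in fact the heart of the proof and are not routine.
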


Our formulation is
equivalent with the one in \cite[5.5]{DM} except that they ask for an equality in the localized Grothendieck ring of varieties over the base $Y$; we will discuss this refinement in Proposition \ref{prop:DMrel}.
  Davison and Meinhardt proved their conjecture in the special case where $w_i=1$ for all $i$ \cite[5.9]{DM-quiver}, extending an earlier result of Behrend, Bryan and Szendr{\H o}i, who handled the case where also $d=1$ \cite[2.12]{BBS}.  We prove the general case in Theorem~\ref{thm:DM}.  Our argument also yields a natural generalization, in which $\AA^n_k$ is replaced by a $\GG_{m,k}$-invariant subvariety of a circle compact toric variety.  See Theorem~\ref{thm:genDM}.

The following statement was conjectured by Kontsevich and Soibelman, who described it as crucial to their theory of motivic Donaldson-Thomas invariants \cite[\S4.4]{KS}.
\begin{conjB}[Kontsevich-Soibelman, 2008]
Let $d_1$, $d_2$ and $d_3$ be nonnegative integers and let $\GG_{m,k}$ act diagonally on $$U=\AA^{d_1}_k\times_k \AA^{d_2}_k\times_k \AA^{d_3}_k$$
with weight $1$ on the first factor, with weight $-1$ on the second factor, and trivially on the third factor.
 Let $$f:U\to \AA^1_k$$ be a $\GG_{m,k}$-equivariant function, where $\GG_{m,k}$ acts trivially on the target $\AA^1_k$, and such that $f(0,0,0)=0$.
 Denote by $f|_{\AA^{d_3}_k}$ be the restriction of $f$ to $\AA^{d_3}_k$ {\em via} the embedding $\AA^{d_3}_k\to U:z\mapsto (0,0,z)$.
  Then the restriction of the motivic nearby fiber of $f$ to $\AA^{d_1}_k\subset f^{-1}(0)$ equals $\LL^{d_1}$ times the motivic Milnor fiber of $f|_{\AA^{d_3}_k}$ at $0$,
 where we view both objects as elements of
   $\mathcal{M}^{\gal}_k$.
\end{conjB}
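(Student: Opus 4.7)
The plan is to realize both sides of the desired identity as motivic volumes of explicit semi-algebraic sets via Corollary~\ref{cor:compar}, and then use the $\GG_{m,k}$-action together with Theorem~\ref{thm:fubini} to prove that their difference vanishes. Writing $R=k\llb t\rrb$, I would represent the restriction of the motivic nearby fiber of $f$ to $\AA^{d_1}_k$ as the motivic volume of
\[
S=\{(x,y,z)\in R^{d_1+d_2+d_3}\,:\,f(x,y,z)=t,\ \val(y_i)>0,\ \val(z_j)>0\},
\]
and $\LL^{d_1}$ times the motivic Milnor fiber of $f|_{\AA^{d_3}_k}$ at the origin as the motivic volume of
\[
A=R^{d_1}\times\{z\in R^{d_3}\,:\,f(0,0,z)=t,\ \val(z_j)>0\},
\]
using that $\Vol(R^{d_1})=\LL^{d_1}$. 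The first observation is that the $\GG_{m,k}$-equivariance of $f$, with weights $(1,-1,0)$ on $U$ and weight $0$ on $\AA^1_k$, forces every monomial of $f$ to have equal total degree in the $x$- and $y$-variables; hence $f(x,0,z)=f(0,0,z)$ identically, which identifies $A$ with $S\cap\{y=0\}$. It therefore suffices to prove that $\Vol(S\setminus A)=0$ in $\gro^{\gal}(\Var_k)$, which is stronger than what the conjecture asserts in $\mathcal{M}^{\gal}_k$.

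The next step is to stratify $S\setminus A$ by which coordinates $y_i$ are nonzero, so that (by additivity of $\Vol$) the vanishing reduces, after relabeling, to showing that
\[
S^{*}=\{(x,y,z)\in S\,:\,y_i\neq 0\text{ for all }i=1,\dotsc,d_2\}
\]
has zero motivic volume. I would apply Theorem~\ref{thm:fubini} with $n=d_2$ and $\pi$ the projection onto the $y$-coordinate torus $\GG_{m,\KK_0}^{d_2}$. The $\GG_{m,\KK_0}$-action $\lambda\cdot(x,y,z)=(\lambda x,\lambda^{-1}y,z)$ preserves $S^{*}$, since $f$ is invariant and the valuation conditions on $y_i$ and $z_j$ are preserved, and it acts on $\trop\circ\pi$ by translation by $-\val(\lambda)(1,\dotsc,1)$. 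Consequently the constructible function $g(w)=\Vol(S^{*}\cap(\trop\circ\pi)^{-1}(w))$ on $\Q^{d_2}$ will be supported in $\Q^{d_2}_{>0}$ and invariant under translation by the diagonal vector.

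The third step is to check that any such constructible function has vanishing $\chi'$-integral. By additivity this amounts to showing that every semi-linear subset $P\subset\Q^{d_2}_{>0}$ invariant under diagonal translation has $\chi'(P)=0$. Each diagonal orbit inside $\Q^{d_2}_{>0}$ is an open ray $\{w+c(1,\dotsc,1)\,:\,c>-\min_i w_i\}$, and such a ray has $\chi'=0$ because $\chi'([a,\infty))=1=\chi'(\{a\})+\chi'((a,\infty))$ forces $\chi'((a,\infty))=0$; a Fubini argument for $\chi'$ itself along the quotient of $\Q^{d_2}$ by the diagonal then yields $\chi'(P)=0$. Combined with Theorem~\ref{thm:fubini}, this gives $\Vol(S^{*})=0$, and hence the conjecture.

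I expect the main obstacle to lie not in the Fubini step, which fits the template described after the statement of Theorem~\ref{thm:fubini}, but in the bookkeeping at the level of the comparison isomorphism: I need Corollary~\ref{cor:compar} to deliver $S$ and $A$ as the correct semi-algebraic representatives of the two sides with matching $\gal$-actions, and I must check that the evident product decomposition of $A$ is $\gal$-equivariant, so that the argument really gives the identity in $\gro^{\gal}(\Var_k)$ rather than merely after forgetting the monodromy.
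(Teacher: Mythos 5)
Your overall strategy matches the paper's (realize both sides via Corollary~\ref{cor:compar}, identify $A$ with $S\cap\{y=0\}$ using the elementary observation that $\GG_{m,k}$-equivariance forces $f(x,0,z)=f(0,0,z)$, and reduce to showing the complementary piece has vanishing volume by a tropical Fubini argument). But there is a genuine gap in Step~2. You claim that the action $\lambda\cdot(x,y,z)=(\lambda x,\lambda^{-1}y,z)$ preserves $S^{*}$, asserting that ``the valuation conditions on $y_i$ and $z_j$ are preserved.'' This is false: the defining conditions $\val(x_i)\geq 0$ and $\val(y_j)>0$ are \emph{not} both preserved for any $\lambda$ with $\val(\lambda)\neq 0$. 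If $\val(\lambda)>0$ the condition $\val(\lambda^{-1}y_j)>0$ can fail; if $\val(\lambda)<0$ the condition $\val(\lambda x_i)\geq 0$ can fail. So Proposition~\ref{prop:torstable} does not apply, and the claimed translation-invariance of $g(w)$ is wrong. In fact, if $g$ were supported in $\Q^{d_2}_{>0}$ and invariant under the full diagonal translation it would be identically zero, which is not the case: for $f=xy$, $d_1=d_2=1$, $d_3=0$, the function $g$ equals $\LL-1$ on the bounded half-open interval $(0,1]$ and is $0$ elsewhere (the correct integral vanishes, but not for your reason). Your Step~3 is vacuous as written.

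The paper's proof of Theorem~\ref{thm:KS} avoids this by making two changes you should adopt. First, project onto the full torus $\GG^{d_1}_{m}\times\GG^{d_2}_{m}$, not just the $y$-torus; projecting onto $y$ alone cannot express the coupled constraints on $x$- and $y$-valuations in a form amenable to the $\chi'$-vanishing argument. Second, and crucially, remove the valuation conditions on $(x,y)$ from the semi-algebraic set and transfer them to the domain of integration: consider the set $S'$ consisting of points with $(x,y)$ in the open torus orbit and $z\in Z(\overline{R})$ reducing to $p$, with no valuation restrictions on $(x,y)$; this set \emph{is} stable under the full $\GG_m(\overline{K})$-action, so Proposition~\ref{prop:torstable} applies and the constructible function $\varphi$ on $\Q^{d_1}\times\Q^{d_2}$ is constant along lines in direction $w\in\Z^{d_1}_{>0}\times\Z^{d_2}_{<0}$. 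One then integrates $\varphi$ over the constraint region $\Gamma=\Q^{d_1}_{\geq 0}\times\Q^{d_2}_{>0}$, and the key combinatorial fact is that the intersection of $\Gamma$ with each fiber of the projection in the direction $w$ is a half-open bounded segment (if $d_1>0$) or an open half-line (if $d_1=0$), hence has bounded Euler characteristic zero. (The paper also first reduces, by induction on $d_1+d_2$ and additivity over torus-orbit strata, to the open orbit $O=(\overline{K}^{\times})^{d_1}\times(\overline{K}^{\times})^{d_2}$.) Your first step and its $\gal$-equivariance are fine; it is the torus action and the choice of projection that need to be repaired.
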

 \noindent This statement, widely known as the integral identity conjecture, was proved by L\^e Quy Thuong \cite{thuong}, and his proof also  uses Hrushovski-Kazhdan motivic integration.  Our tropical motivic Fubini theorem allows us to substantially
  simplify the proof, and to generalize it in the following way: we allow
  arbitrary positive weights on $\AA^{d_1}_k$ and arbitrary negative weights on $\AA^{d_2}_k$, and we replace the factor $\AA^{d_3}_k$ by any $k$-variety with trivial
  $\GG_{m,k}$-action.  See Theorem~\ref{thm:KS}.  Our argument also gives a further generalization, in which $\AA^{d_1}_k$ is replaced by a connected $\GG_{m,k}$-invariant subvariety of a circle compact toric variety, and $\AA^{d_2}_k$ is replaced by an affine toric variety with repelling fixed point.  See Theorem~\ref{thm:genKS}.

  In all of these applications, we prove equalities in $\gro^{\gal}(\Var_k)$, without inverting $\LL$.  The resulting statements are stronger, because $\LL$ is a zero-divisor \cite{borisov}.

\subsection{Plan of the paper}
In Section \ref{sec:motvol}, we explain the construction of the motivic volume of Hrushovski and Kazhdan, which is based on the model theory of algebraically closed valued fields.  We have made an effort to present the results in geometric terms in Theorem~\ref{thm:HK}.
We have given a similar presentation in \cite{NPS} in the setting where the base field
is an algebraic closure of $K_0$, rather than $K_0$ itself.  For the applications in this paper it is essential to keep track of the Galois action of $\gal$ on the motivic volume, which requires the more careful analysis given here.  We then prove an explicit formula for the motivic volume in terms of a
 strict normal crossings model (Theorem~\ref{thm:snc}), and show how to realize the motivic nearby fiber and motivic Milnor fiber as motivic volumes of semi-algebraic sets (Corollary~\ref{cor:compar}).

  In Section \ref{sec:fubini} we prove the tropical motivic Fubini theorem (Theorem~\ref{thm:fubini}). The proof proceeds in two steps:
  we first consider the sch{\"o}n case, where one can construct an explicit semi-algebraic decomposition of the semi-algebraic set into elementary pieces.
  We then use a result of Luxton and Qu \cite{LQ} to decompose an arbitrary semi-algebraic set into sch{\"o}n pieces.
    In Section \ref{sec:appli}, we present two applications: the proofs of Conjectures A and B (Theorems \ref{thm:DM} and \ref{thm:KS}). Finally, in Section \ref{sec:general}, we explain how to refine the constructions relatively over a base scheme, and how to generalize Conjectures A and B to invariant subvarieties of toric varieties equipped with a $\GG_m$-action.

\subsection{Notation}\label{ss:notation} Let $k$ be field of characteristic zero that contains all roots of unity.\footnote{The condition that $k$ contain all roots of unity is not strictly necessary; we include it to make $\gal$ a profinite group, rather than merely a profinite group scheme over $k$. This avoids additional descent arguments in the proofs. However, we do not want to assume that $k$ is algebraically closed: even if one is ultimately interested in the case $k=\CC$, one needs to consider finitely generated extensions of $\CC$ to study relative motivic invariants over a base variety in Section \ref{ssec:rel}.} We set $\KK_0=k\llpar t\rrpar$ and $R_0=k\llb t \rrb$.
We denote by $\KK$ the field of Puiseux series $$\KK=\bigcup_{n>0}k\llpar t^{1/n} \rrpar$$
 and we fix an algebraic closure $\Ka$ of $\KK$.
  The $t$-adic valuation on $\KK_0$ extends uniquely to a valuation
 $$\val:\Ka^{\times}\to \Q$$ on $\Ka$. We further extend it to $\Ka$ by setting $\val(0)=\infty$, and we extend the natural order on $\Q$ to $\Q\cup\{\infty\}$
 by declaring that $q\leq \infty$ for all $q$ in $\Q\cup \{\infty\}$.
 We write $R$ for the valuation ring in $\KK$, and $\widehat{R}$ for its $t$-adic completion. We also write $\Ra$ for the valuation ring in $\Ka$; its residue field is an algebraic closure $\ka$ of $k$.

The Galois group $\mathrm{Gal}(\KK/\KK_0)$ is canonically isomorphic to the profinite group
$$\gal=\lim_{\stackrel{\longleftarrow}{n}}\mu_n(k)$$
of roots of unity in $k$. This isomorphism also defines a splitting of the short exact sequence
 $$1\to \gal\to \Gal(\Ka/\KK_0)\to \Gal(\ka/k)\to 1.$$
The group $\gal$ acts continuously on $\Ka$ from the left by means of the rule: $\zeta \ast t^{1/n}=\zeta t^{1/n}$ for $\zeta\in \mu_n(k)$, $n>0$.
 We will consider the {\em inverse} right action of $\gal$ on $\Ka$ so that $\gal$ acts on $\Spec \Ka$ and $\Spec \Ra$ from the {\em left}.
 This convention will be important for the comparison results in Section \ref{sec:DL}.

 Let $M$ be a free $\ZZ$-module of finite rank and let $\TT$ be the split $R$-torus with character lattice $M$.
   Then we can consider the tropicalization map
  $$\trop:\TT(\Ka)\to \Hom(M,\QQ):x\mapsto \left(m\mapsto \val(\chi^m(x))\right).$$
  Let $w$ be an element of $\Hom(M,\Q)$, and write $w=v/d$ for some positive integer $d$ and some element $v$ in $M^{\vee}$.
 Consider the left $\mu_d(k)$-action on $\TT_{k}$ with weight vector $v$; that is,
 each element $\zeta$ in $\mu_d(k)$ acts on the character $\chi^m$ by multiplication with $\zeta^{\langle v,m\rangle}$, for every $m\in M$.
 This induces a left $\gal$-action
 on $\TT_{k}$ that we call the $\gal$-action with weight vector $w$. The $k$-variety $\TT_{k}$ endowed with this action will be denoted by
 $\TT^w_{k}$, and we will write $\TT^w$ and $\TT^w_{\KK}$ for the varieties $\TT^w_{k}\times_k R$ and $\TT^w_{k}\times_k K$ endowed with the diagonal $\gal$-actions.
 Then multiplication with $t^{\langle w,\cdot\rangle}$ defines a $\gal$-equivariant bijection between $\TT^w(\Ra)=\Hom(M,\Ra^{\times})$ and $\trop^{-1}(w)\subset \Hom(M,\Ka^{\times})$.

For every $\Ra$-scheme $\cX$, we denote by $\spe_{\cX}$ the specialization map
 $$\spe_{\cX}:\cX(\Ra)\to \cX(\ka)$$
 defined by reducing coordinates modulo the maximal ideal in $\Ra$. For every scheme $\cY$ over $R_0$, resp.~$R$, we will also write $\spe_{\cY}$ instead of
 $\spe_{\cY\times_{R_0}\Ra}$, resp.~$\spe_{\cY\times_{R}\Ra}$. If $C$ is a constructible subset of $\cY_k$, then we will usually write $\spe^{-1}_{\cY}(C)$ instead of
 $\spe^{-1}_{\cY}(C(\ka))$ to simplify the notation.

 If $X$ is a $K$-scheme of finite type, then an $R$-model of $X$ is a flat $R$-scheme
 of finite type $\cX$ endowed with an isomorphism $\cX_K\to X$.
 By a variety over a field, we mean a scheme of finite type.

\subsection*{Acknowledgements} We are grateful to Ben Davison, Antoine Ducros, Arthur Forey, Davesh Maulik and Vivek Shende for inspiring discussions. Davesh Maulik has informed us that he has another proof of the integral identity of Kontsevich and Soibelman, based on a residue formula for motivic vanishing cycles.
Maulik's paper was still in preparation at the moment of writing.
  Johannes Nicaise is supported by the ERC Starting Grant MOTZETA (project 306610) of the European Research Council, and  by long term structural funding (Methusalem
grant) of the Flemish Government.  Sam Payne is supported in part by NSF CAREER DMS--1149054.

\section{Motivic volumes with Galois action}\label{sec:motvol}
\subsection{Good Galois actions on schemes}
 Let $\cX$ be an $R$-scheme of finite type equipped with a left action of $\gal$ such that
 the morphism $\cX\to\Spec R$ is equivariant. We say that the $\gal$-action on $\cX$ is {\em good} if we can cover
 $\cX$ with $\gal$-stable affine open subschemes $\cU$ such that
 $\gal$ acts continuously on $\mathcal{O}(\cU)$, where we consider the profinite topology on $\gal$ and the discrete topology on $\mathcal{O}(\cU)$.
 The continuity of this action is equivalent to the property that the action on each element of $\mathcal{O}(\cU)$ factors through $\mu_n(k)$ for some $n>0$.

  If the $\gal$-action on $\cX$ is good, then by Galois descent, $X_0=\cX_K/\gal$ is a variety over $K_0$ and the natural map of $K$-varieties
  $$\cX_K\to X_0\times_{K_0}K$$ is a $\gal$-equivariant isomorphism.
 Conversely, let $X_0$ be a variety over $K_0$ and set $X=X_0\times_{K_0}K$, endowed with the Galois action of $\gal$.
 Let $\cX$ be an $R$-model of $X$ such that the $\gal$-action on $X$ extends to $\cX$, and assume that we can cover $\cX$ with $\gal$-stable affine open subschemes.
 Then the $\gal$-action on $\cX$ is good.

If the structure map $\cX\to \Spec R$ factors through $\Spec k$, then the $\gal$-action on $\cX$ is good if and only
if it factors through a finite quotient $\mu_n(k)$ and we can cover $\cX$ with $\gal$-stable affine open subschemes.
Thus, in this case, our definition is equivalent to the one in \cite[\S2.4]{DL}.

\subsection{Polyhedra and constructible sets}
Let $V$ be a finite dimensional affine space over $\QQ$, that is, a torsor under a finite dimensional $\QQ$-vector space.
A {\em polyhedron} in $V$ is a finite intersection of closed rational half-spaces in $V$. In other words, it is a set of the form
$$\{v\in V\,|\,f_i(v)\geq 0\mbox{ for }i=1,\ldots,r\}$$
where $f_1,\ldots,f_r$ are affine linear maps from $V$ to $\QQ$.
 A {\em constructible} subset $\Gamma$ of $V$ is a finite Boolean combination of polyhedra.

 There exists a unique $\ZZ$-valued invariant $\chi'$ on the Boolean algebra of constructible subsets in $V$ that is additive on disjoint unions
 and that assigns the value $1$ to every non-empty polyhedron. This invariant $\chi'$ is called the {\em bounded Euler characteristic}.
  It is clear from the definition that it is invariant
 under affine
 linear automorphisms of $V$.
 One can compute $\chi'(\Gamma)$ for
 every constructible subset $\Gamma$ of $V$ in the following way. We choose an isomorphism of affine spaces $V\to \QQ^n$ for some $n\geq 0$.
  There is a canonical subset $\Gamma_{\R}$ of $\R^n$ associated with $\Gamma$, defined by the same system of $\Q$-linear inequalities as $\Gamma$.
  Then one can show that the compactly supported Euler characteristic of
 $\Gamma_{\R}\cap [-r,r]^n$ stabilizes for sufficiently large $r\in \RR$; the limit value is precisely $\chi'(\Gamma)$.

Constructible sets with vanishing bounded Euler characteristic will play an important role in the applications in Sections~\ref{sec:appli} and \ref{sec:general}.  Typical examples include half-open line segments and open half-lines, as well as products of these with arbitrary constructible sets.

\subsection{Semi-algebraic sets}\label{ss:semialg}
 Let $X$ be a variety over $\Ka$.
 A semi-algebraic subset of $X$ is a finite Boolean combination of subsets of $X(\Ka)$ of the form
  \begin{equation}\label{eq:sa}
  \{x\in U(\Ka)\,|\,\val(f(x))\leq \val(g(x))\} \subset X(\Ka)
  \end{equation} where $U$ is an affine open subvariety of $X$ and $f,\,g$ are regular functions on $U$.
 If $X$ is of the form $X_0\times_{\KK_0}\Ka$, for some variety $X_0$ over $K_0$, then we say that $S$ is defined over $\KK_0$ if we can write it as a finite Boolean combination of sets of the form \eqref{eq:sa} such that
  $U$, $f$ and $g$ are defined over $\KK_0$. Note that this property depends on the choice of $X_0$; if we want to make this choice explicit, we will also call $S$ a semi-algebraic subset of $X_0$
  (even though it is not an actual subset of $X_0$).

  \begin{exam}  Let $T_0$ be a split $\KK_0$-torus with cocharacter lattice $N$.
 Then for every constructible subset $\Gamma$ of $N_{\Q}$, the set $$\trop^{-1}(\Gamma)\subset T_0(\Ka)$$
 is a semi-algebraic subset of $T_0$.
\end{exam}
\begin{exam}
 Let $\cX$ be an $\Ra$-scheme of finite type and let $C$ be a constructible subset of $\cX_{\ka}$. Then $\spe^{-1}_{\cX}(C)$ is a semi-algebraic subset of $\cX_{\Ka}$.
 To see this, it suffices to consider the case where $\cX$ is affine and $C$ is closed in $\cX_{\ka}$.
  If $(z_1,\ldots,z_r)$ is a tuple of generators of the $\Ra$-algebra $\mathcal{O}(\cX)$, and $(f_1,\ldots,f_s)$ is a tuple of elements of $\mathcal{O}(\cX)$ such that
  $C$ is the set of common zeroes of the functions $f_i$, then
  $$\spe_{\cX}^{-1}(C)=\{x\in \cX(\Ka)\,|\,\val(z_i(x))\geq 0,\,\val(f_j(x))>0\mbox{ for all }i,j\}.$$
  This is a finite Boolean combination of sets of the form \eqref{eq:sa}. The same argument shows that, if $\cX_0$ is an $R_0$-scheme of finite type
and $C_0$ is a constructible subset of $(\cX_0)_k$, then $\spe^{-1}_{\cX_0}(C_0)$ is a semi-algebraic subset of $(\cX_0)_{K_0}$.
  \end{exam}

If $X$ and $X'$ are varieties over $\Ka$ and
 $S$ and $S'$ are semi-algebraic subsets of $X$ and $X'$, respectively, then a morphism of semi-algebraic sets $f:S\to S'$ is a map whose graph is semi-algebraic
 in $X\times_{\Ka} X'$. If $X=X_0\times_{\KK_0} \Ka$ and $X'=X'_0\times_{\KK_0}\Ka$ and $S$ and $S'$ are defined over $\KK_0$,
 then we say that $f$ is defined over $\KK_0$ if its graph has this property. It follows from Robinson's quantifier elimination for algebraically closed valued fields
 that the image of a morphism of semi-algebraic sets is again a semi-algebraic set. If the morphism is defined over $\KK_0$, then the same holds for its image.

 We denote by $\VF_{\KK_0}$ the category of semi-algebraic sets defined over $\KK_0$; it comes equipped with a base change functor $\VF_{\KK_0}\to \VF_{\Ka}$ to the category of semi-algebraic sets over $\Ka$.
 For every object $S$ in $\VF_{\KK_0}$, there is a natural action of the Galois group $\gal$ on the set $S$, and $\KK_0$-morphisms of semi-algebraic sets are equivariant with respect to this action.

\begin{prop}\label{prop:galequiv}
Let $X_0$ be a variety over $K_0$, and let $S$ be a semi-algebraic subset of $X=X_0\times_{K_0}\Ka$. Then $S$ is defined over $K_0$ if and only if
$S$ is stable under the Galois action of $G=\Gal(\Ka/\KK_0)$ on $X_0(\Ka)$.
\end{prop}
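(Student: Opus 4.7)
The plan is to dispatch the easy direction $(\Rightarrow)$ directly from the definitions and to reduce the converse to Robinson's quantifier elimination for $\mathrm{ACVF}$. For the forward direction, I would check $G$-stability on generators: a basic set $\{x\in U(\Ka):\val(f(x))\le\val(g(x))\}$ with $U$, $f$, $g$ defined over $K_0$ is automatically $G$-stable, since $f(\sigma x)=\sigma(f(x))$ for $\sigma\in G$ and the valuation on $\Ka$ is $G$-invariant (being the unique extension of the $t$-adic valuation on $K_0$); hence $\val(f(\sigma x))=\val(f(x))$, and the property is inherited by Boolean combinations.

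For the converse, assume $S$ is $G$-stable. Only finitely many parameters from $\Ka$ appear in any description of $S$, and since $\Ka/K_0$ is algebraic these parameters lie in some finite Galois extension $L/K_0$ inside $\Ka$. Set $H=\Gal(L/K_0)$ and write $S=\{x\in X(\Ka):\phi(x,\bar a)\}$ for a quantifier-free formula $\phi$ in the language of valued fields on $X_0$ (with whatever $K_0$-parameters the geometry of $X_0$ requires) and an extra parameter tuple $\bar a\in L^m$. A short computation using the $G$-invariance of the valuation shows that $\sigma S$ is defined by $\phi(x,\sigma\bar a)$ for every $\sigma\in G$; combined with $G$-stability of $S$, this yields that $\phi(x,\sigma\bar a)$ defines $S$ itself for every $\sigma\in H$.

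Next, the finite $H$-orbit of $\bar a$ in $\AA^m_\Ka$ is reduced and $G$-stable, so it descends to a zero-dimensional $K_0$-subscheme $V_0\subset\AA^m_{K_0}$ cut out by polynomials $P_1,\dots,P_r\in K_0[\bar y]$. The $K_0$-formula
\[
\Phi(x)\;:=\;\exists\bar y\,\Bigl(\bigwedge_{i=1}^{r} P_i(\bar y)=0\;\wedge\;\phi(x,\bar y)\Bigr)
\]
has truth set $\bigcup_{\sigma\in H}\{x:\phi(x,\sigma\bar a)\}=S$ in $\Ka$ by the preceding step. Robinson's quantifier elimination then supplies a quantifier-free formula $\psi(x)$ with parameters in $K_0$ equivalent to $\Phi$ over $\mathrm{ACVF}$, and distributing $\psi$ over a cover of $X_0$ by $K_0$-affine opens exhibits $S$ as a Boolean combination of basic sets of the form \eqref{eq:sa} with data defined over $K_0$.

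The main obstacle, as I see it, is not a single hard step but the bookkeeping required to translate between the purely syntactic "quantifier-free over $K_0$" output of QE and the geometric notion of semi-algebraic over $K_0$ from \S\ref{ss:semialg}: one must work on a $K_0$-affine cover of $X_0$ from the outset so that every atomic condition produced by QE can be placed on a $K_0$-affine piece using $K_0$-rational regular functions, and so that conditions such as $f=0$ or $\val(f)=\val(g)$ are rewritten as Boolean combinations of the specific inequalities allowed by the paper's definition. Once this framework is fixed, all the substantive content is packaged inside Robinson's quantifier elimination.
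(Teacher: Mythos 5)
Your proof is correct and follows essentially the same strategy as the paper: absorb the finitely many $\Ka$-parameters into a finite Galois extension $L/K_0$, use $G$-stability of $S$ to see that all Galois conjugates of the defining formula cut out the same set, package the conjugate data into a $K_0$-scheme (your $V_0 = \Spec L$), express $S$ as a projection/existential quantification over that scheme, and invoke quantifier elimination for ACVF. The paper phrases the middle steps geometrically---it takes $X' = X_0 \times_{K_0} K'$ viewed as a $K_0$-variety, forms the semi-algebraic set $S'$ on $X'$ defined over $K_0$ by the same formulas, and shows that the image of $S'$ under $X'(\Ka)\to X_0(\Ka)$ equals $S$---but this is precisely your $\exists\bar y$ over $V_0$ in scheme-theoretic clothing, so the two arguments are the same modulo translation between the syntactic and geometric viewpoints.
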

\begin{proof}
The condition is clearly necessary; we will prove that it is also sufficient.
 Suppose that $S$ is stable under the $G$-action. We may assume that $X_0$ is affine;
 then $S$ is a finite Boolean combination of sets of the form
 $$\{x\in X(\Ka)\,|\,\val(f(x))\leq \val(g(x))\}$$ with $f,g\in \mathcal{O}(X)$.
We can find a finite extension $K'$ of $K_0$ in $\Ka$ such that all the functions $f$ and $g$ that appear in these expressions
are defined over $K'$.
Set $X'=X_0\times_{K_0}K'$. We view $X'$ as a variety over $K_0$
by forgetting the $K'$-structure. Let $S'$ be the semi-algebraic subset of $X'(\Ka)$ defined by the same formulas as $S$; this is a semi-algebraic subset of $X'$ defined over $K_0$.
  Since $S$ is stable under the $G$-action on $X_0(\Ka)$, the image of $S'$ under the projection map $X'(\Ka)\to X_0(\Ka)$ is equal to $S$.
 Now it follows from quantifier elimination that $S$ is defined over $K_0$.
\end{proof}

 \begin{exam}\label{exam:descent}
We will use Proposition \ref{prop:galequiv} in the following way. Let $X_0$ be a variety over $\KK_0$, and let $\cX$ be an $R$-model of $X=X_0\times_{\KK_0}K$ such that the Galois action of $\gal$ on $X$ extends
to an action on $\cX$.
  Let $C$ be a constructible subset of $\cX_k$ that is stable under the action of $\gal$. Then $\spe^{-1}_{\cX}(C)$ is stable under the $G$-action on
   $X_0(\Ka)$. Hence, it is a semi-algebraic subset of $X_0$. If the $\gal$-action on $\cX$ is good, this can also be seen directly:
we can form the quotient $\cX_0=\cX/\gal$ in the category of schemes. This is an $R_0$-scheme of finite type whose generic fiber is canonically isomorphic with $X_0$.
If we denote by $C_0$ the image of $C$ under the projection morphism $\cX\to \cX_0$, then $C_0$ is a constructible subset of $(\cX_0)_k$ and $\spe^{-1}_{\cX}(C)=\spe_{\cX_0}^{-1}(C_0)$.
 \end{exam}

\subsection{Grothendieck rings of varieties and semi-algebraic sets}
The piecewise geometry of varieties and semi-algebraic sets is encoded in various Grothendieck rings.
 We first consider the Grothendieck ring $\gro^{\gal}(\Var_k)$ of $k$-varieties with $\gal$-action. As an abelian group, it is defined by the following presentation:
\begin{itemize}
\item {\em Generators}: isomorphism classes of $k$-varieties $X$ endowed with a good $\gal$-action.
 Isomorphism classes are taken with respect to $\gal$-equivariant isomorphisms.
\item {\em Relations}: we consider two types of relations.
\begin{enumerate}
\item {\em Scissor relations}: if $X$ is a $k$-variety with a good $\gal$-action and $Y$ is a $\gal$-stable closed subvariety of $X$, then
$$[X]=[Y]+[X\setminus Y].$$
\item {\em Trivialization of linear actions}: let $X$ be a $k$-variety with a good $\gal$-action, and let $V$ be a $k$-vector space
of dimension $d$ with a good linear action of $\gal$.
Then $$[X\times_k V]=[X\times_k \AA^d_k]$$ where the $\gal$-action on $X\times_k V$ is the diagonal action and the action on $\AA^d_k$ is trivial.
\end{enumerate}
\end{itemize}
The group $\gro^{\gal}(\Var_k)$ has a unique ring structure such that $[X]\cdot [X']=[X\times_k X']$ for all $k$-varieties $X$, $X'$ with good $\gal$-action. Here the $\gal$-action
on $X\times_k X'$ is the diagonal action.
 The identity element in $\gro^{\gal}(\Var_k)$ is the class of the point $\Spec k$.
 We write $\LL$ for the class of $\AA^1_k$ (with the trivial $\gal$-action) in the ring $\gro^{\gal}(\Var_k)$.

\begin{rem}
The trivialization of linear actions is a standard operation in the theory of motivic integration, in order to obtain well-defined motivic measures and a change of variables
formula; see for instance \cite[\S2.4]{DL}. All of the standard cohomological realizations respect this trivialization. In the context of Hrushovski and Kazhdan's theory of motivic integration,
 this relation naturally appears when one identifies all of the fibers of the tropicalization map
 $$\trop:(K^{\times})^n\to \Q^n$$ in the definition of the motivic volume (see Theorem~\ref{thm:HK}).
\end{rem}

Now, we define the Grothendieck ring $\gro(\VF_{\KK_0})$ of semi-algebraic sets over $\KK_0$.
The underlying group is the free abelian group on isomorphism classes $[S]$ of semi-algebraic sets $S$ over $\KK_0$
modulo the relations $$[S]=[S']+[S\smallsetminus S']$$ for all semi-algebraic sets $S'\subset S$. Here isomorphism classes are taken in the category $\VF_{\KK_0}$, that is, with respect to semi-algebraic bijections
 defined over $\KK_0$.
The group $\gro(\VF_{\KK_0})$ has a unique ring structure such that $$[S]\cdot [S']=[S\times S']$$ in $\gro(\VF_{\KK_0})$ for all semi-algebraic sets $S$ and $S'$.
 The identity element in $\gro(\VF_{\KK_0})$ is the class of the point, that is, the semi-algebraic set $[X_0(\Ka)]$ with $X_0=\Spec \KK_0$.

\begin{exam}\label{exam:ball}
Let
$$B=\{x\in \Ka\,|\,\val(x)>0\}$$ be the open unit ball in $\Ka$. This is a semi-algebraic set defined over $\KK_0$. We can write the class of $B$ in $\gro(\VF_{\KK_0})$ as
$$[B]=1 + [\trop^{-1}(\Q_{>0})]$$ (the class of the point $\{0\}$ plus the class of the punctured ball).
\end{exam}

\subsection{A refinement of the motivic volume}
In \cite{HK}, Hrushovski and Kazhdan have defined the {\em motivic volume} of a semi-algebraic set over $\Ka$.
 More precisely, they constructed a ring morphism
 $$\Vol:\gro(\VF_{\Ka})\to \gro(\Var_{\ka})$$ from the Grothendieck ring of semi-algebraic sets over $\Ka$ to the Grothendieck ring of varieties over $\ka$.
 Their construction is based in an essential way on the model theory of algebraically closed valued fields.
  The main lines are presented from a geometric perspective in \cite{NPS} and \cite{Ni-HK}.
  If the semi-algebraic set is defined over the subfield $\KK_0$ of $\Ka$, then the motivic volume can be refined
  in order to reflect the action of the Galois group $\Gal(\Ka/\KK_0)$. We will now explain this refinement, again presenting
  the results of Hrushovski and Kazhdan in a more geometric language.

\begin{thm}[Hrushovski-Kazhdan]\label{thm:HK}
There exists a unique ring morphism
$$\Vol:\gro(\VF_{\KK_0})\to \gro^{\gal}(\Var_k)$$ that satisfies the following properties.
\begin{enumerate}
\item \label{it:smoothvol} Let $X_0$ be a smooth variety over $\KK_0$, and let $\cX$ be a smooth $R$-model of $X=X_0\times_{\KK_0}\KK$ such that the Galois action of $\gal$ on $X$ extends to a good action on $\cX$.
 Then $S=\cX(\Ra)$ is defined over $\KK_0$, and $\Vol([S])=[\cX_k]$ in $\gro^{\gal}(\Var_k)$.

\item Let $\Gamma$ be a constructible subset of $\Q^n$, for some $n\geq 0$, and set $S'=\trop^{-1}(\Gamma)$. Then $S'$ is defined over $\KK_0$, and
$$\Vol([S'])=\chi'(\Gamma)(\LL-1)^n$$ in $\gro^{\gal}(\Var_k)$.
\end{enumerate}
\end{thm}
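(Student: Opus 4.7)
The plan is to deduce the theorem from Hrushovski and Kazhdan's original construction of the motivic volume over an algebraically closed valued field, as recalled geometrically in \cite{NPS}, by carefully tracking the Galois action under descent from $\Ka$ to $\KK_0$. First I would invoke \cite{HK} to obtain the unrefined ring morphism $\Vol_{\Ka}\colon\gro(\VF_{\Ka})\to\gro(\Var_{\ka})$ characterized by the analogous two properties (with $k,\KK_0$ replaced by $\ka,\Ka$). The strategy is then to show that, for a semi-algebraic set $S$ defined over $\KK_0$, the class $\Vol_{\Ka}([S])$ lifts canonically to $\gro^{\gal}(\Var_k)$ and that this assignment extends to a ring morphism on $\gro(\VF_{\KK_0})$.

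The main input is the Hrushovski--Kazhdan decomposition expressing every class in $\gro(\VF)$ as a finite sum over pieces coming from the leading-term structure $\mathrm{RV}$, which is built from the residue field and the value group. By Proposition~\ref{prop:galequiv}, $\KK_0$-definability is equivalent to stability under $G=\Gal(\Ka/\KK_0)$, and by Robinson's quantifier elimination for algebraically closed valued fields the HK decomposition of a $\KK_0$-definable set can be chosen $G$-equivariantly. The resulting residue-field pieces are varieties over $\ka$ carrying a continuous $G$-action; using the splitting $1\to\gal\to G\to\Gal(\ka/k)\to 1$ from~\S\ref{ss:notation}, Galois descent along $\Gal(\ka/k)$ converts them into $k$-varieties with a good $\gal$-action, producing well-defined classes in $\gro^{\gal}(\Var_k)$. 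The trivialization-of-linear-actions relation is precisely what is forced on us: the $\gal$-equivariant identifications between different tropicalization fibers introduce torus twists by weight vectors, as encoded by the varieties $\TT^w_k$ of~\S\ref{ss:notation}.

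Verifying the two characterizing properties is then direct. For~(1), smoothness of $\cX$ means the fibers of $\spe_{\cX}$ are open balls of motivic volume $1$, since $\Vol([\trop^{-1}(\Q_{>0})])=\chi'(\Q_{>0})(\LL-1)=0$: the bounded Euler characteristic of a half-open ray vanishes. Additivity then yields $\Vol([\cX(\Ra)])=[\cX_k]$ with the prescribed $\gal$-action coming from the good action on $\cX$. For~(2), additivity of both sides reduces the problem to the case of a single rational weight $\Gamma=\{w\}$; the $\gal$-equivariant bijection $\TT^w(\Ra)\to\trop^{-1}(w)$ of~\S\ref{ss:notation}, combined with property~(1) applied to the smooth model $\TT^w$, gives $\Vol([\trop^{-1}(w)])=[\TT^w_k]$, which equals $(\LL-1)^n$ in $\gro^{\gal}(\Var_k)$ precisely thanks to the trivialization of the weight-$w$ linear $\gal$-action. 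Uniqueness follows because the Hrushovski--Kazhdan decomposition writes every class in $\gro(\VF_{\KK_0})$ as a $\Z$-linear combination of products of the two types of classes described in~(1) and~(2), and the ring structure then pins down their images.

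The hard part is the Galois-equivariant refinement of the HK machinery itself: one must verify that the decomposition isomorphism and the integration map from $\mathrm{RV}$-pieces to $\Var_{\ka}$ can both be made $G$-equivariant in the $\KK_0$-definable language, that the resulting residue-field actions are continuous (so that good $\gal$-actions appear after descent), and that the defining relations of $\gro^{\gal}(\Var_k)$—especially the trivialization of linear actions—capture exactly the identifications forced by $\gal$-equivariant comparisons of tropicalization fibers with distinct weight vectors. This refinement is implicit but not highlighted in \cite{HK}, and going beyond the $\Ka$-version treated in~\cite{NPS} requires the more careful bookkeeping of $\gal$-actions advertised at the start of Section~\ref{sec:motvol}.
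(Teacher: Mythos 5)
Your proposal identifies the right conceptual ingredients but takes a materially different route from the paper and leaves its central step unresolved. The paper does \emph{not} start from the unrefined morphism $\Vol_{\Ka}\colon\gro(\VF_{\Ka})\to\gro(\Var_{\ka})$ and then descend: it directly invokes Hrushovski and Kazhdan's morphism $\Theta\colon \gro(\RES[\ast])\otimes_{\Z}\gro(\Q[\ast])\to\gro(\VF_{\KK_0})$ and the map $\mathcal{E}'$ from \cite[Thm.~10.5(4)]{HK}, both of which are already formulated over the base field $\KK_0$ (HK's theory applies to henselian, not just algebraically closed, fields), and then uses \cite[4.3.1]{HL} to identify the target $!\gro(\RES)$ with $\gro^{\gal}(\Var_k)$. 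The ``Galois-equivariant refinement of the HK machinery'' that you flag as the hard part is therefore not something implicit in \cite{HK} waiting to be extracted: it \emph{is} the content of the theorems being cited, and your framing of it as bookkeeping understates the gap.

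The lifting strategy itself has a concrete problem. The forgetful base-change map $\mathrm{Res}^{\gal}_{\{1\}}\colon\gro^{\gal}(\Var_k)\to\gro(\Var_{\ka})$ is not injective, so there is no ``canonical lift'' of $\Vol_{\Ka}([S])$ for a $\KK_0$-definable $S$; the refined class has to be built from the $\KK_0$-definable data itself, not recovered from its $\ka$-shadow. Producing such a construction (together with additivity, multiplicativity, and well-definedness on $\gro(\VF_{\KK_0})$) amounts to redoing the HK decomposition in the $\KK_0$-definable category, which is precisely what $\Theta$ and $\mathcal{E}'$ provide. Your verification of property~(1) is also too coarse: saying the fibers of $\spe_{\cX}$ are balls of volume~$1$ does not by itself determine what the constructed morphism assigns to $[\cX(\Ra)]$, since the Grothendieck ring $\gro(\VF_{\KK_0})$ carries no fibered additivity. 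The paper instead exhibits an explicit semi-algebraic bijection, defined over $\KK_0$, between $\cX(\Ra)$ and $(\cX_k\times_k R)(\Ra)$, using \'etale local models over $\AA^n_R$ and the rigidity of formal completions to check $\gal$-equivariance; this is the step your argument would need to supply to make the identification $\Vol([\cX(\Ra)])=[\cX_k]$ rigorous.
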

\begin{proof}
We unravel some of the central results in \cite{HK}.
 We will not explain all of the notations, as this is not strictly necessary to follow the argument, but we provide precise references for the reader.

 Hrushovski and Kazhdan constructed a surjective morphism of rings
$$\Theta:\gro(\RES[\ast])\otimes_{\ZZ}\gro(\Q[\ast])\to \gro(\VF_{\KK_0})$$
and gave an explicit description of its kernel -- see Theorem~8.8 and Corollary~10.3 of \cite{HK}.
 Here $\gro(\RES[\ast])$ and $\gro(\Q[\ast])$ are certain graded Grothendieck rings of varieties with $\gal$-action and constructible sets in $\Q$-affine spaces, respectively.
  Informally speaking, the relations that generate the kernel express that the fibers of the tropicalization map are $\GG^n_{m,R}(R)$-torsors,
 and that the open unit ball in $K$ from Example \ref{exam:ball} can also be described as $\spe^{-1}_{\AA^1_{R_0}}(0)$.
 In Theorem~10.5(4) of \cite{HK} and its proof, Hrushovski and Kazhdan have also defined a ring morphism
 $$ \mathcal{E}':\gro(\VF_{\KK_0})\to \gro^{\gal}(\Var_k).$$
 To be precise, in \cite{HK} the target of $\mathcal{E}'$ is a quotient of $\gro(\RES[\ast])$ they denote by $!\gro(\RES)$, but this ring is canonically isomorphic to
 $\gro^{\gal}(\Var_k)$ by \cite[4.3.1]{HL} (there it was assumed that $k$ is algebraically closed, but the proof remains valid if we only assume that $k$ contains all the roots of unity). We set $\Vol=\mathcal{E}'$ and we will prove that it satisfies, and is uniquely determined by, the properties in the statement.

  The ring $\gro(\Q[\ast])$ is generated by the classes of pairs $(\Gamma,n)$ where $n$ is a nonnegative integer and $\Gamma$ is
  a constructible subset of $\Q^n$. The image of the class of $(\Gamma,n)$ under $\mathcal{E}'\circ \Theta$ is precisely
  $\chi'(\Gamma)(\LL-1)^n$.
 Thus it suffices to prove the following two claims:
 \begin{enumerate}[(a)]
 \item \label{it:claim1} The ring $\gro(\VF_{\KK_0})$ is generated by elements of the form $[S\times \trop^{-1}(\Gamma)]$ where $S$ is as in the statement of Theorem~\ref{thm:HK} and $\Gamma$ is a polyhedron in $\Q^n$ for some $n\geq 0$.
 \item \label{it:claim2} The morphism $\mathcal{E}'$ sends the class of $S$ in $\gro(\VF_{\KK_0})$ to the class of $\cX_k$ in $\gro^{\gal}(\Var_k)$.
 \end{enumerate}
 These statements imply the existence and uniqueness of the morphism $\Vol$.

  Let us prove these claims.
  The ring $\gro(\RES[\ast])$ is generated by equivalence classes of pairs $(Y,n)$ where $n$ is a nonnegative integer and $Y$ is a $k$-variety of
  pure dimension $d\leq n$ endowed with a good action of $\widehat{\mu}$ (here we are implicitly using the identifications explained in \cite[\S4.3]{HL}).
     Partitioning $Y$ into subvarieties, we may assume that $Y$ is smooth.
  We set $$\cY=Y\times_k R$$ and we endow it with the diagonal $\widehat{\mu}$-action.
  We write $B$ for the open unit ball in $\Ka$ as in Example \ref{exam:ball}.
   It follows easily from the constructions in \cite{HK} that the image of the class of $(Y,n)$ under $\Theta$ is $$[\cY(\Ra)][B^{n-d}]\in \gro(\VF_{\KK_0}).$$
    This proves claim \eqref{it:claim1}.

 It remains to prove claim \eqref{it:claim2}. Let $\cX$ and $S$ be as in the statement of Theorem~\ref{thm:HK}.
 We set $$\cX'=\cX_k\times_k R$$ and we endow it with the diagonal $\widehat{\mu}$-action.
 By definition, the image of $[\cX(\Ra)]$ under $\mathcal{E}'$ is $$[\cX_k]\in \gro^{\gal}(\Var_k).$$
 Thus it suffices to show that there exists a semi-algebraic bijection between $\cX(\Ra)$ and $\cX'(\Ra)$ that is defined over $\KK_0$.
 Working locally on $\cX$ and $\cX'$, we may assume that there exist \'etale $R$-morphisms $\cX\to \AA^n_R$ and $\cX'\to \AA^n_R$ that coincide on the special fibers (we are not requiring any $\gal$-equivariance here). Set $\cZ=\cX\times_{\AA^n_R}\cX'$ and denote by $\Delta$ the image of the diagonal morphism
 $\cX_k\to \cZ_k$. Using the henselian property for $R$, we see that $S'=\spe^{-1}_{\cZ}(\Delta)$ is the graph of a bijection between $\cX(\Ra)$ and $\cX'(\Ra)$.
  We will prove that  $S'$ is defined over $\KK_0$.

 The set $S'$ is stable under the Galois action of $\Gal(\ka/k)$ by construction. Thus by Proposition~\ref{prop:galequiv},
  it is enough to prove that $S'$ is stable under the action of $\gal$ on $\cX(\Ra)\times \cX'(\Ra)$, that is, the bijection defined by $S'$ is $\gal$-equivariant.
  Denote by $\fX$ and $\fX'$ the formal $t$-adic completions  of $\cX$ and $\cX'$, and denote by $\fZ$ the open formal subscheme of the formal $t$-adic completion of $\cZ$ supported on the open subscheme $\Delta$ of $\cZ_k$.
   Then $\fZ$ is the graph of an isomorphism of formal $\widehat{R}$-schemes $h:\fX\to \fX'$. The induced isomorphism $h_k$ between the special fibers is $\gal$-equivariant
   by construction. This implies that $h$ is $\gal$-equivariant, because every continuous action of $\gal$ on
    $\fX$ or $\fX'$ by $\widehat{R}$-automorphisms that are trivial on the special fiber, is trivial (to see this, linearize the action on the completed local rings).
  \end{proof}

If $S$ is a semi-algebraic set defined over $\KK_0$, we will write $\Vol(S)$ for $\Vol([S])$.
 If $X$ is a variety over $\KK_0$, then we can view $X(\Ka)$ as a semi-algebraic set defined over $\KK_0$; we will usually write $\Vol(X)$ instead of $\Vol(X(\Ka))$.
 It follows immediately from the definitions that the motivic volume has the following properties with respect to extensions of the base field $K_0$.
 Let $K'_0$ be a finite extension of $K_0$ in $\Ka$. Denote by $k'$ the residue field of $K'_0$ and let $\gal'$ be the inertia group of $K'_0$; this is an open subgroup of $\gal$.
Let
$$\mathrm{Res}^{\gal}_{\gal'}:\gro^{\gal}(\Var_k)\to \gro^{\gal'}(\Var_{k'})$$ be the morphism defined by base change to $k'$ and restricting the $\gal$-action to $\gal'$. Then
 the diagram
$$\xymatrix{
\gro(\VF_{\KK_0}) \ar[d] \ar[r]^{\Vol} &\gro^{\gal}(\Var_k)\ar[d]^{\mathrm{Res}^{\gal}_{\gal'}} \\
\gro(\VF_{\KK'_0}) \ar[r]_{\Vol} &\gro^{\gal'}(\Var_{k'})} $$
 commutes, where the left vertical morphism is the base change morphism.
Likewise, the diagram
$$\xymatrix{
\gro(\VF_{\KK_0}) \ar[d] \ar[r]^{\Vol} &\gro^{\gal}(\Var_k)\ar[d]^{\mathrm{Res}^{\gal}_{\{1\}}} \\
\gro(\VF_{\Ka}) \ar[r]_{\Vol} &\gro(\Var_{\ka})} $$
 commutes, where the left vertical morphism is the base change morphism and $\mathrm{Res}^{\gal}_{\{1\}}$ is the morphism that performs base change to $\ka$ and forgets the $\gal$-action.

\begin{exam}\label{exam:ballvol}
If $B$ is the open unit ball in $\Ka$ from Example \ref{exam:ball}, then
$\Vol(B)=1$ in $\gro^{\gal}(\Var_k)$ because $\chi'(\Q_{>0})=0$.
\end{exam}

\if false
 The formula for the motivic volume in Theorem \ref{thm:HK}\eqref{it:smoothvol} admits the following useful generalization.

\begin{prop}\label{prop:goodredvol}
Let $X_0$ be a smooth variety over $\KK_0$, and let $\cX$ be a smooth $R$-model of $X=X_0\times_{\KK_0}\KK$ such that the Galois action of $\gal$ on $X$ extends to a good action on $\cX$. Let $C$ be a subscheme of $\cX_k$ that is stable under the action of $\gal$.
 Then $S=\spe^{-1}_{\cX}(C)$ is defined over $\KK_0$, and $\Vol(S)=[C]$ in $\gro^{\gal}(\Var_k)$.
\end{prop}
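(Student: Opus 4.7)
The plan is to bootstrap Theorem~\ref{thm:HK}\eqref{it:smoothvol} from the full-space case $C=\cX_k$ to an arbitrary $\gal$-stable $C$ by cutting out the complement. That $S=\spe^{-1}_{\cX}(C)$ is defined over $\KK_0$ is immediate from Example~\ref{exam:descent} using the $\gal$-stability of $C$, so the content of the proposition is really the volume formula.

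First, I would reduce to the case in which $C$ is closed in $\cX_k$. Stratifying $\cX_k$ by $\gal$-stable locally closed subsets writes $C$ as a finite disjoint union of pieces of the form $Z\setminus W$, where $Z\supseteq W$ are closed $\gal$-stable subsets of $\cX_k$. Scissor additivity in both $\gro(\VF_{\KK_0})$ (for $\Vol\circ\spe^{-1}_{\cX}$) and $\gro^{\gal}(\Var_k)$ (for the class map) then reduces the claim to the closed case. In that case, view $C$ as a closed subscheme of $\cX$ and form the open subscheme $\cX':=\cX\setminus C$: it is a smooth $R$-scheme with the same generic fiber $X$ as $\cX$ and special fiber $U:=\cX_k\setminus C$, and the $\gal$-action on $\cX$ restricts to $\cX'$ by $\gal$-stability of $C$. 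Applying Theorem~\ref{thm:HK}\eqref{it:smoothvol} to both $\cX$ and $\cX'$ gives $\Vol(\cX(\Ra))=[\cX_k]$ and $\Vol(\cX'(\Ra))=[U]$. Since $\cX'(\Ra)=\spe^{-1}_{\cX}(U)$ and $\cX(\Ra)=S\sqcup\spe^{-1}_{\cX}(U)$, additivity of $\Vol$ yields
$$\Vol(S)=[\cX_k]-[U]=[C]$$
in $\gro^{\gal}(\Var_k)$, as required.

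The main obstacle I anticipate is verifying that the open subscheme $\cX'=\cX\setminus C$ actually inherits a \emph{good} $\gal$-action, so that Theorem~\ref{thm:HK}\eqref{it:smoothvol} legitimately applies to $\cX'$. Concretely, I need to cover $\cX'$ by $\gal$-stable affine open subschemes; the natural approach is to start from such a cover of $\cX$ and refine it inside $\cX'$, using that the $\gal$-action on each affine open factors through a finite quotient $\mu_n(k)$ and that finite orbits of points in a separated scheme lie in a common affine open neighborhood. The same refinement trick is what underlies the existence of a $\gal$-stable locally closed stratification in the reduction to closed $C$.
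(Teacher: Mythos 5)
Your proof is correct, and it is genuinely different from the one in the paper. The paper proves this proposition by the same strategy as Claim~(b) in the proof of Theorem~\ref{thm:HK} and as Theorem~\ref{thm:snc}: reduce to $\cX=\cX_k\times_k R$ via the $\gal$-equivariant bijection already constructed, use Noetherian induction to replace $C$ by a smooth open stratum, and then build an explicit $\gal$-equivariant semi-algebraic bijection $\spe^{-1}_{\cX}(C)\to C(\Ra)\times B^{r-s}$ from local \'etale models over $\AA^r_R$ and the henselian property. Your approach is shorter and more conceptual: take the open complement $\cX'=\cX\smallsetminus C$ and subtract. It derives the statement purely from two applications of Theorem~\ref{thm:HK}\eqref{it:smoothvol} plus additivity of $\Vol$ and the scissor relation, without touching \'etale local models. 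The trade-off is that you must verify $\cX'$ is itself an admissible input to Theorem~\ref{thm:HK}\eqref{it:smoothvol}: it is smooth, flat, of finite type over $R$ with generic fiber $X$ (note finite type requires checking quasi-compactness of $\cX'$, which holds because $\cX'\cap \Spec A$ is a finite union of the affine generic fiber $\Spec A_K$ and finitely many basic opens $D(f_i)$ cutting out $C$ in the special fiber of each affine chart), and the $\gal$-action restricts to a good action. The paper's route packages these verifications into the explicit local bijections it constructs.

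One small imprecision in your sketch of the good-action step: the general fact that \emph{any} finite set of points in a separated scheme lies in a common affine open is false (it fails already for non-projective proper threefolds), so you should not appeal to it. What you actually need is weaker and follows directly from the hypotheses: the orbit of any $x\in\cX'$ is contained in a single $\gal$-stable affine open $\cU$ of $\cX$ because the action on $\cX$ is good, and within $\cU=\Spec A$ you can use prime avoidance to find $f$ in the ideal of $C\cap\cU$ not vanishing at any point of the orbit, then replace $f$ by the $\gal$-invariant product $g=\prod_{\zeta\in\mu_n(k)}\zeta\cdot f$ to obtain a $\gal$-stable affine open $D(g)\subset\cX'$ containing the orbit, on which the action remains continuous. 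With this correction, your argument is complete and is an attractive alternative to the proof in the paper.
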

\begin{proof}
 The semi-algebraic set $S$ is defined over $\KK_0$ by Proposition \ref{prop:galequiv}.
To establish Claim \ref{it:claim2} in the proof of Theorem \ref{thm:HK}, we have constructed a $\gal$-equivariant semi-algebraic bijection between $\cX(\Ra)$
 and $(\cX_k \times_k R)(\Ra)$. Since this bijection commutes with the respective specialization maps, we may assume that $\cX=\cX_k\times_k R$ (endowed with the diagonal $\gal$-action). By the additivity of the motivic volume and Noetherian induction, is suffices to show that the proposition holds if we replace $C$ by some $\gal$-stable non-empty open subscheme. Thus we may assume that $C$ is smooth over $k$ and that the $\gal$-action on $C$ factors through a free action of $\mu_n(k)$ for some $n>0$.
  Let $\gal'$ be the kernel of the projection $\gal\to \mu_n(k)$. Generically, $C$ is defined in $\cX_k$ by an ideal $I$ generated by a regular sequence $(f_1,\ldots,f_s)$
  $\cX_k\to \AA^r_k$ such that $C$ is the inverse image of the linear subspace $\AA^s_k$, for some $r\geq s\geq 0$.  Now both $\cX$ and $C\times_k \AA^{s}_{R}$ are $\gal$-equivariant \'etale covers of
  $\AA^r_{R}\times_R \AA^{s}_R$. Denote by $\cZ$ their fiber product over $\AA^r_{R}\times_R \AA^s_{R}$  and by $\Delta$ the image of $C$ in $\cZ_k$ under the diagonal embedding.
   We denote by $B$ the open unit ball in $\Ka$; it has motivic volume $1$ by Example \ref{exam:ballvol}.
  Then $\spe^{-1}_{\cZ}(\Delta)$ is the graph of a semi-algebraic bijection  $$\spe^{-1}_{\cX}(C)\to
  \spe^{-1}_{C\times_k \AA^{r-s}_{R}}(C)=C(\Ra)\times B^{r-s}$$ that is defined over $\KK_0$ because it is $\gal$-equivariant.
   The equality $\Vol(S)=[C]$ in $\gro^{\gal}(\Var_k)$ now follows from Theorem \ref{thm:HK}.
\end{proof}
\fi

\subsection{Comparison with the motivic nearby fiber of Denef and Loeser}\label{sec:DL}
 Let $\cX$ be a $R_0$-scheme of finite type. Assume that $\cX$ is regular and that its special fiber $\cX_k$ is a divisor with strict normal crossings support.
  In this section, we will establish an explicit formula for
  the motivic volume $\Vol(\cX(\Ra))\in \gro^{\gal}(\Var_k)$ of the semi-algebraic set $\cX(\Ra)$. This formula will then allow us to compare the motivic volume with the motivic nearby fiber of Denef and Loeser.

 We write $$\cX_k=\sum_{i\in I}N_i E_i$$ where $E_i,i\in I$ are the irreducible components of $\cX_k$ and the coefficients $N_i$ are their multiplicities in $\cX_k$.
 For every non-empty subset $J$ of $I$, we set
 $$E_J=\bigcap_{j\in J}E_j,\quad E_J^o=E_J\setminus \left(\bigcup_{i\notin J}E_i\right).$$
 The sets $E^o_J$ and $E_J$ are locally closed subsets of $\cX_k$, and we endow them with their induced reduced subscheme structure.
  By the definition of a strict normal crossings divisor, all of the schemes $E_J$ and $E_J^o$ are regular.
 As $J$ ranges through the non-empty subsets of $I$, the subschemes $E_J^o$ form a partition of $\cX_k$.

 Set $e=\mathrm{lcm}\{N_i\,|\,i\in I\}$.
 Let $\widetilde{\cX}$ be the normalization of $\cX\times_{k\llb t\rrb}k\llb t^{1/e}\rrb$ and set
 $$\widetilde{E}_J^o=(\widetilde{\cX}\times_{\cX}E_J^o)_{\red}$$ for every non-empty subset $J$ of $I$.
  Then the group $\mu_e(k)$ acts on $\widetilde{E}_J^o$, and this action factors through a free action of $\mu_{N_J}(k)$ where
  $N_J=\gcd\{N_j\,|\,j\in J\}$. This makes $\widetilde{E}_J^o$ into a $\mu_{N_J}(k)$-torsor over $E_J^o$  -- see \cite[\S2.3]{Ni-tame} and \cite[3.2.2]{BuNi}.
  We denote by $h:\widetilde{\cX}\to \cX$ the projection morphism from $\widetilde{\cX}$ to $\cX$.

  \begin{thm}\label{thm:snc}
  Let $C$ be a locally closed subset of $\cX_k$ and set $S=\spe^{-1}_{\cX}(C)$.
  Then we have
  $$\Vol(S)=\sum_{\emptyset \neq J \subset I}(1-\LL)^{|J|-1}[\widetilde{E}_J^o\cap h^{-1}(C)]$$ in $\gro^{\gal}(\Var_k)$.
  \end{thm}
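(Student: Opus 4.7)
The plan is to decompose $S$ according to the strata of $\cX_k$, and then evaluate each contribution using the two fundamental formulas of Theorem~\ref{thm:HK}: the smooth-model formula (part~(1)) and the tropicalization formula (part~(2)). By additivity of $\Vol$ and the partition $\cX_k = \coprod_J E_J^o$, we have $\Vol(S) = \sum_{\emptyset\neq J\subset I}\Vol(\spe_\cX^{-1}(C\cap E_J^o))$, and since $\widetilde E_J^o \cap h^{-1}(C) = \widetilde E_J^o \cap h^{-1}(C\cap E_J^o)$, it suffices to establish, for each non-empty $J$, the identity
\[
\Vol\bigl(\spe_\cX^{-1}(C_J)\bigr) \;=\; (1-\LL)^{|J|-1}\,[\widetilde E_J^o \cap h^{-1}(C_J)], \qquad C_J := C\cap E_J^o.
\]
Restricting $\cX$ to $U_J := \cX \setminus \bigcup_{i\notin J} E_i$, I may further assume $I = J$.

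Next I would analyze the local structure. Étale-locally on $\cX$, choose coordinates $(x_j)_{j\in J}, (y_k)_k$ with $t = u \prod_{j\in J} x_j^{N_j}$ for a unit $u$ and $E_j = V(x_j)$. With $n_j := N_j/N_J$ (so $\gcd(n_j)=1$), this factors as $t = u\,\bigl(\prod x_j^{n_j}\bigr)^{N_J}$. After passing to $R_0\llb t^{1/e}\rrb$ and normalizing, $\widetilde{\cX}$ splits étale-locally into $N_J$ sheets indexed by the $N_J$-th roots $v$ of $u$; on each sheet, $\widetilde{\cX}$ satisfies $s^{e/N_J} = v\prod x_j^{n_j}$. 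This identifies $\widetilde E_J^o$ étale-locally with $\{(y,v) : v^{N_J} = u(0,y)\}$ mapping to $E_J^o = \{y\}$, the announced $\mu_{N_J}$-torsor with $\mu_{N_J}$-action $v\mapsto \zeta v$ induced by the $\gal$-action on $t^{1/e}$.

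The geometric core of the argument is the construction of a $\gal$-equivariant semi-algebraic bijection, defined over $K_0$,
\[
\Phi:\spe_\cX^{-1}(C_J)\;\xrightarrow{\ \sim\ }\;\bigl[(\widetilde E_J^o \cap h^{-1}(C_J))\times_k R\bigr](\Ra)\;\times\;\trop^{-1}(\Gamma),
\]
where $\Gamma\subset\Q^{|J|-1}$ is the open $(|J|-1)$-simplex obtained by projecting $\{w\in\Q_{>0}^J : \sum_j N_j w_j = 1\}$ onto the complement of some coordinate $j_0$. On local coordinates $(x_j, y_k)$, the map records (i) the tropical data $(\val(x_j))_{j\neq j_0}\in\Gamma$; (ii) the specialization $\spe(y_k)\in C_J$; and (iii) the leading coefficient $v := \bigl(\prod_j \xi_j^{n_j}\bigr)^{-1}$, where $\xi_j := x_j/t^{\val(x_j)}\in\overline R^{\times}$. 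The equation $u(x,y)\prod x_j^{N_j} = t$ forces $v^{N_J} = u(0,\spe(y))$, so $(\spe(y),v)$ indeed defines a point of $\widetilde E_J^o \cap h^{-1}(C_J)$, while the higher-order corrections to the $\xi_j$'s and $y_k$'s fill out open unit balls and thus combine to give a point of the smooth $R$-model $(\widetilde E_J^o \cap h^{-1}(C_J))\times_k R$. Applying $\Vol$ multiplicatively, Theorem~\ref{thm:HK}(1) contributes $[\widetilde E_J^o \cap h^{-1}(C_J)]$ from the first factor, and Theorem~\ref{thm:HK}(2) contributes
\[
\chi'(\Gamma)\,(\LL-1)^{|J|-1} \;=\; (-1)^{|J|-1}(\LL-1)^{|J|-1} \;=\; (1-\LL)^{|J|-1}
\]
from the second, using the standard fact $\chi'(\text{open }k\text{-simplex})=(-1)^k$.

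The hardest step will be constructing $\Phi$: one must verify that the leading-coefficient map indeed lands in the torsor $\widetilde E_J^o$ rather than merely $E_J^o$, that $\Phi$ is $\gal$-equivariant (matching the geometric $\mu_{N_J}$-action on $v$ with the arithmetic $\gal$-action descending from $t^{1/e}$), and that $\Phi$ patches consistently across the étale charts used to describe $\cX$ while remaining semi-algebraic over $K_0$. These compatibility checks are precisely where the careful bookkeeping of the Galois structure of $\widetilde\cX$ from Step~2 becomes essential, and where the fact that the formula is truly an identity in $\gro^{\gal}(\Var_k)$, not just in $\gro(\Var_{\overline k})$, is verified.
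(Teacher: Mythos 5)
Your proposal follows essentially the same route as the paper's proof: reduce by additivity to a single stratum $E_J^o$, construct a semi-algebraic bijection from $\spe_\cX^{-1}(C_J)$ to a product of an $\Ra$-point set of a smooth $R$-model with a tropical preimage of an open $(|J|-1)$-simplex, and then evaluate the two factors using parts~(1) and~(2) of Theorem~\ref{thm:HK}. The numerology is right and the reduction steps are correct. However, the heart of the argument --- the construction of $\Phi$ and the verification that it is defined over $K_0$ --- is exactly where your sketch stops short, and this is not merely a matter of unwritten routine details.

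Two concrete points. First, your description of $\Phi$ via leading coefficients is informal where it most needs to be precise: you record $\spe(y)$ and the unit $v=(\prod\xi_j^{n_j})^{-1}$, and assert that ``the higher-order corrections fill out open unit balls'' so as to assemble an $\Ra$-point of $(\widetilde E_J^o\cap h^{-1}(C_J))\times_k R$. But it is not obvious that these corrections can be packaged into a well-defined point of that specific smooth $R$-model in a coordinate-free, chart-independent way. The paper sidesteps this by introducing the auxiliary toric $R'$-scheme $\cU=\Spec R'[u_j]/(t^{1/N_J}-\prod u_j^{M_j})$, constructing a smooth $\mu_{N_J}$-equivariant morphism $g:\cY\to\cU$, lifting it to an \'etale map $\cY\to\cU\times\AA^m_{R'}$, and then invoking the henselian property of $R$ to produce the bijection as $\spe_\cZ^{-1}(\Delta)$ for a suitable diagonal. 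This mechanism guarantees both that the bijection exists and that it commutes with specialization, without needing to name coordinates explicitly. Second, the tropical factor: you project $\{w\in\Q_{>0}^J:\sum_j N_j w_j=1\}$ to the complement of a coordinate $j_0$, but such a projection has no reason to respect the $\gal$-action and hence no reason to descend to $K_0$. The paper instead chooses a basis $v_1,\ldots,v_r$ of $\Z^J$ with $v_1=(M_j)$ and uses the $\mu_{N_J}(k)$-invariant monomials $t^{-b_i/N_J}\prod_j u_j^{v_{i,j}}$ to produce a $K_0$-rational isomorphism $\cU_{K'}\to\GG_{m,K'}^{r-1}$; this is precisely what makes the identification of $\spe_\cU^{-1}(O)$ with a tropical preimage defined over $K_0$.

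You correctly flag the $\gal$-equivariance of $\Phi$ as the hard part, but the proposal does not actually carry it out. The paper's argument here is a genuine input: it passes to formal completions along $\widetilde C$, shows the henselian bijection is induced by an isomorphism $\alpha':\fY\to\fY'$ of formal $\fU$-schemes, and then uses the rigidity of finite-order automorphisms acting trivially on the fiber over $O$ (by linearizing on completed local rings). Without some version of this rigidity argument, the claim that your explicitly written $\Phi$ is $\gal$-equivariant remains unverified, and this is exactly where the refinement from $\gro(\Var_{\overline k})$ to $\gro^{\gal}(\Var_k)$ lives. In short: your outline is sound and parallels the paper, but the construction of $\Phi$ needs the $\cU$/\'etale/henselian machinery and the formal-completion rigidity argument to become a proof.
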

  \begin{proof}
    Our proof follows similar lines as that of claim \eqref{it:claim2} in the proof of Theorem~\ref{thm:HK}, but we need to replace the \'etale-local
    model $\AA^n_R$ to take the singularities of $\cX_k$ into account.
        By additivity, we may assume that $C$ is a closed subset of the stratum $E_J^o$ for some non-empty subset $J$ of $I$.
        We set $M_j=N_j/N_J$ for every $j\in J$. Set $R'=k\llb t^{1/N_J} \rrb$ and denote by $\cY$ the normalization of $\cX\times_{R_0}R'$.
         The scheme $\cY$ carries a natural $\mu_{N_J}(k)$-action that is compatible with the Galois action on $R'$.
        It follows from \cite[3.2.2]{BuNi} that the natural morphism $\widetilde{\cX}\to \cY$ induces a $\mu_{N_J}(k)$-equivariant isomorphism
     of $E^o_J$-schemes   $$\widetilde{E}_J^o\to \cY\times_{\cX} E^o_J.$$
     We write $\widetilde{C}$ for the inverse image of $C$ in $\cY\times_{\cX} E^o_J$. Under the above isomorphism, it corresponds to
     the closed subset $h^{-1}(C)$ of $\widetilde{E}_J^o$.

 Working locally on $\cX$, we may assume that $\cX$ is affine, that $I=J$, and that $E_j$ is defined by a global equation $f_j=0$ on $\cX$ for every $j\in J$.
  Then we can write $$t=u\prod_{j\in J}f^{N_j}_j$$ with $u$ an invertible function on $\cX$. The proof of \cite[2.3.2]{Ni-tame} reveals that
  there exists a $\mu_{N_J}(k)$-equivariant isomorphism of $\cX$-schemes
  $$\cY\to \Spec \mathcal{O}(\cX)[T]/(1-uT^{N_J})$$ where $\mu_{N_J}(k)$ acts on the target by multiplication on $T$ and such that
   $t^{1/N_J}\in \mathcal{O}(\cY)$ is identified with $T^{-1}\prod_{j\in J}f_j^{M_j}$.
   We choose integers $a_j,\,j\in J$ such that $\sum_{j\in J}a_jM_j=1$, and we set $g_j=T^{-a_j}f_j$ for every $j\in J$.
  Then
  $$t^{1/N_J}=\prod_{j\in J}g_j^{M_j} $$
and the functions $g_j$ give rise to a smooth morphism
 $$g:\cY\to \cU=\Spec R'[u_j,j\in J]/(t^{1/N_J}-\prod_{j\in J}u_j^{M_j})$$ such that $\widetilde{E}_J^o$ is the inverse image of the origin $O$ in $\cU_k$.
  We endow $\cU$ with the left $\mu_{N_J}(k)$-action induced by the Galois action on $R'$ and the rule that $ u_j\ast \zeta=\zeta^{-a_j}u_j$ for every $\zeta\in \mu_{N_J}(k)$.
   Then the morphism $g$ is $\mu_{N_J}(k)$-equivariant.
 Shrinking $\cX$ if necessary, we can further arrange that $g$ lifts to an \'etale morphism
 $$\cY\to \cU\times_{R'}\AA^m_{R'}$$ for some $m\geq 0$ (which is not necessarily $\mu_{N_J}$-equivariant).
 Corestricting this morphism over $O\times_k \AA^m_k$ we obtain
an \'etale morphism $\widetilde{E}_J^o\to \AA^m_k$.
 If we set $\cY'=\cU\times_k \widetilde{E}_J^o$, then
this morphism gives rise, at its turn, to an \'etale morphism
  $$\cY'\to \cU\times_{R'} \AA^m_{R'}.$$
We endow $\cY'$ with the diagonal $\mu_{N_J}(k)$-action.

 We consider the fibered product $$\cZ=\cY\times_{(\cU\times_{R'} \AA^m_{R'})} \cY'.$$
  Denote by $\Delta$ the image of $\widetilde{C}$ under the diagonal map $\widetilde{E}_J^o\to \cZ_k$.
  The henselian property for $R$ implies that $\spe^{-1}_{\cZ}(\Delta)$ is the graph of a semi-algebraic bijection $\alpha$ between
  $\spe^{-1}_{\cY}(\widetilde{C})$ and $$\spe^{-1}_{\cY'}(\widetilde{C})=\spe^{-1}_{\cU}(O)\times \spe^{-1}_{\widetilde{E}_J^o\times_k R'}(\widetilde{C}).$$
     Now we make the following claims.
  \begin{enumerate}[(a)]
  \item \label{it:claim1b} The bijection $\alpha$ is defined over $\KK_0$.
    \item \label{it:claim2b} The volume of $\spe^{-1}_{\widetilde{E}_J^o\times_k R'}(\widetilde{C})$ is equal to $[\widetilde{C}]$ in $\gro^{\gal}(\Var_k)$.
  \item \label{it:claim3b} The volume of $\spe^{-1}_{\cU}(O)$ is equal to $(1-\LL)^{|J|-1}$.
  \end{enumerate}
  These claims together yield the desired formula for $\Vol(\spe^{-1}_{\cX}(C))$.

 In order to prove claim \eqref{it:claim1b} it suffices to show that $\alpha$ is $\gal$-equivariant, by Proposition~\ref{prop:galequiv} (applied to the graph of $\alpha$).
 We denote by $\fY$ and $\fY'$ the formal completions of $\cY$ and $\cY'$ along $\widetilde{C}$, respectively.
 Then, once again, the formal completion of $\cZ$ along $\Delta$ is the graph of an isomorphism
  $\alpha':\fY\to \fY'$. The bijection $\alpha$ is map induced by $\alpha'$
  on $\Ra$-points.
   By construction, both $\fY$ and $\fY'$ come equipped with a smooth $\mu_{N_J}(k)$-equivariant morphism to $\fU$, the formal completion of $\cU$ at $O$,
   and $\alpha'$ is an isomorphism of formal $\fU$-schemes.
     Now the result follows from the fact that every finite order automorphism of $\fY$ or $\fY'$ over $\fU$
     that acts trivially on the fiber over $O$ is the identity (this can again be seen
     by linearizing the action on the completed local rings).

  Next, we prove claim \eqref{it:claim2b}. By additivity, we may assume that $C$ is a smooth closed subvariety of $E_J^o$.
  Then, locally on $E_J^o$, we can find an \'etale morphism
  $E_J^o\to \AA^r_k$ such that $C$ is the inverse image of the linear subspace $\AA^s_k$, for some $r\geq s\geq 0$.
  Now both $\widetilde{E}_J^o\times_k R'$ and $\widetilde{C}\times_k \AA^{r-s}_{R'}$ are equivariant \'etale covers of
  $\AA^r_{R'}$. Denote by $\cV$ their fiber product over $\AA^r_{R'}$  and by $\Delta'$ the image of $\widetilde{C}$ in $\cV_k$ under the diagonal embedding.
   We denote by $B$ the open unit ball in $\Ka$; it has motivic volume $1$ by Example \ref{exam:ballvol}.
  Then $\spe^{-1}_{\cV}(\Delta')$ is the graph of a semi-algebraic bijection  $$\spe^{-1}_{\widetilde{E}_J^o\times_k R'}(\widetilde{C})\to
  \spe^{-1}_{\widetilde{C}\times_k \AA^{r-s}_{R'}}(\widetilde{C})=\widetilde{C}(\Ra)\times B^{r-s}$$ that is defined over $\KK_0$ because it is $\gal$-equivariant.
  Claim \eqref{it:claim2b} now follows from the definition of the motivic volume.

  Finally, we prove claim \eqref{it:claim3b}. We denote by $K'$ the fraction field of $R'$ and we set $r=|J|$ and $v_1=(M_j,\,j\in J)$.
  Since the entries $M_j$ are coprime, $v_1$ can be extended to a basis $v_1,\ldots,v_r$ of $\ZZ^J$. For every $i$ in $\{2,\ldots,r\}$ we set
  $b_i=\sum_{j\in J}a_jv_{i,j}$.
  The $(r-1)$-tuple of $\mu_{N_J}(k)$-invariant invertible functions
  $$(t^{-b_2/N_J}\prod_{j\in J}u_j^{v_{2,j}},\ldots,t^{-b_r/N_J}\prod_{j\in J}u_j^{v_{r,j}})$$ defines an isomorphism $\cU_{K'}\to \mathbb{G}^{r-1}_{m,K'}$ that descends to $\KK_0$.
  This isomorphism identifies $\spe^{-1}_{\cU}(O)$ with
  $$\trop^{-1}(\Gamma)\subset (\Ka^{\times})^n$$ where $\Gamma$ is an open $(r-1)$-simplex in $\Q^{r-1}$.
  Since the bounded Euler characteristic of an open $(r-1)$-simplex is equal to $(-1)^{r-1}$,
  the definition of the motivic volume now implies that $\spe^{-1}_{\cU}(O)=(1-\LL)^{|J|-1}$.
  \end{proof}

Using Theorem~\ref{thm:snc}, one can compare the motivic volume to other motivic invariants that appear in the literature. We are mainly interested in
 the motivic nearby fiber of Denef-Loeser \cite[3.5.3]{DL}.
  The motivic nearby fiber was defined as a motivic incarnation of the complex of nearby cycles associated with a morphism of $k$-varieties
  $f:U\to \AA^1_k$ with smooth generic fiber. It is an object $\psi^{\mathrm{mot}}_f$ that lies in the localized Grothendieck ring of varieties with $\gal$-action over the zero
  locus $f^{-1}(0)$ of $f$.
  For every subvariety $C$ of $f^{-1}(0)$ we can restrict $\psi^{\mathrm{mot}}_f$ over $C$ by base change and then view the result as an element in
  $\mathcal{M}^{\gal}_k=\gro^{\gal}(\Var_k)[\LL^{-1}]$ by forgetting the $C$-structure.
     In particular, if $x$ is a closed point on $U$ such that $f(x)=0$, then by restricting $\psi^{\mathrm{mot}}_f$ over $x$ we obtain an element in
  $\mathcal{M}^{\widehat{\mu}}_k$ that is called the {\em motivic Milnor fiber} of $f$ at $x$ and denoted by $\psi^{\mathrm{mot}}_{f,x}$.

  \begin{cor}\label{cor:compar}
 Let $f:U\to \Spec k[t]$ be a morphism of varieties over $k$, with smooth generic fiber,
and denote by $\cX$ the base change of $U$ from $k[t]$ to $R_0=k\llb t\rrb$.
Let $C$ be a subvariety of the zero locus $f^{-1}(0)=\cX_k$ of $f$.
 Then $\spe^{-1}_{\cX}(C)$ is a semi-algebraic set defined over $\KK_0$.
 It consists of the points $u$ in $U(\Ra)$ that satisfy $f(u)=t$ and such that the reduction of $u$ modulo the maximal ideal
 in $\Ra$ belongs to $C$.
 The image of $\Vol(\spe^{-1}_{\cX}(C))$ in the localized Grothendieck ring $\mathcal{M}^{\widehat{\mu}}_k$ is
equal to the restriction of Denef and Loeser's motivic nearby fiber of $f$ over $C$. In particular, $\psi_f^{\mathrm{mot}}=\Vol(\cX(\Ra))$ and,
for every closed point $x$ on $\cX_k$,
 $\psi^{\mathrm{mot}}_{f,x}=\Vol(\spe^{-1}_{\cX}(x))$ in $\mathcal{M}^{\gal}_{k}$.
\end{cor}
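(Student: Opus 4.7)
The strategy is to reduce the computation to a strict normal crossings model via a log resolution and then apply Theorem~\ref{thm:snc}, comparing the resulting formula with Denef and Loeser's defining formula for $\psi_f^{\mathrm{mot}}$.

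First I would verify the two elementary claims. The identification of $\spe^{-1}_{\cX}(C)$ with the set of $u \in U(\Ra)$ satisfying $f(u) = t$ and $\spe(u) \in C$ comes from unwinding $\cX = U \times_{k[t]} R_0$: an $\Ra$-point of $\cX$ is the same datum as a $k[t]$-morphism $\Spec \Ra \to U$, where $\Ra$ is regarded as a $k[t]$-algebra via $t\mapsto t$, and specialization is reduction modulo the maximal ideal of $\Ra$. The semi-algebraicity of $\spe^{-1}_{\cX}(C)$ over $\KK_0$ is then the example on reductions discussed in Section~\ref{ss:semialg} applied to the $R_0$-scheme of finite type $\cX$.

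Next I would choose a log resolution $h : \widetilde U \to U$ of the pair $(U, f^{-1}(0))$, so that $\widetilde U$ is smooth over $k$, $h$ is an isomorphism over the complement of $f^{-1}(0)$, and $(f\circ h)^{-1}(0)$ has strict normal crossings support. Setting $\widetilde \cX = \widetilde U \times_{k[t]} R_0$ produces a regular $R_0$-scheme of finite type with strict normal crossings special fiber, equipped with a proper morphism $h : \widetilde \cX \to \cX$ whose restriction to generic fibers is an isomorphism (since $t$ is invertible there). By the valuative criterion of properness applied to the valuation ring $\Ra$, $h$ induces a bijection $\widetilde{\cX}(\Ra) \to \cX(\Ra)$ that intertwines specialization with $h_{\ka}$ on the special fibers. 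Its graph is the set of $\Ra$-points of the graph of $h$, hence semi-algebraic, and Galois-equivariant because $h$ is defined over $k$; Proposition~\ref{prop:galequiv} then shows that it is defined over $\KK_0$. Restricting above $C$ gives a $\KK_0$-semi-algebraic bijection
$$\spe^{-1}_{\widetilde \cX}(h^{-1}(C)) \longrightarrow \spe^{-1}_{\cX}(C),$$
and in particular the two sides have the same motivic volume in $\gro^{\gal}(\Var_k)$.

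Applying Theorem~\ref{thm:snc} to the strict normal crossings model $\widetilde \cX$ with the $\gal$-stable locally closed subset $h^{-1}(C) \subset \widetilde\cX_k$ then evaluates $\Vol(\spe^{-1}_{\cX}(C))$ as $\sum_{\emptyset \neq J \subset I}(1-\LL)^{|J|-1}[\widetilde E_J^o \cap h^{-1}(C)]$ in $\gro^{\gal}(\Var_k)$, which is precisely the formula that computes the restriction of Denef and Loeser's motivic nearby fiber $\psi_f^{\mathrm{mot}}$ to $C$ in the localized Grothendieck ring $\mathcal{M}^{\gal}_k$. Taking $C = \cX_k$ recovers $\psi_f^{\mathrm{mot}} = \Vol(\cX(\Ra))$, and taking $C = \{x\}$ recovers $\psi^{\mathrm{mot}}_{f,x} = \Vol(\spe^{-1}_{\cX}(x))$. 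The main technical obstacle is the passage through the log resolution: confirming that the set-theoretic bijection $\widetilde\cX(\Ra) \to \cX(\Ra)$ produced by the valuative criterion really is a $\KK_0$-semi-algebraic bijection intertwining specialization with $h$. Once this is in hand, Theorem~\ref{thm:snc} does all the remaining work.
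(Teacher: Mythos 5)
Your proposal is correct and follows essentially the same route as the paper; the paper's proof is a single sentence (compute $\psi_f^{\mathrm{mot}}$ on a log resolution via Denef--Loeser's formula, then compare with Theorem~\ref{thm:snc}), and your argument fills in the reduction to the snc case that the paper leaves implicit. One small simplification: the step you flag as the ``main technical obstacle'' is actually immediate. Since $h : \widetilde U \to U$ is a morphism of $k[t]$-schemes that is an isomorphism away from $f^{-1}(0)$, its base change $h_{\KK_0} : \widetilde{\cX}_{\KK_0} \to \cX_{\KK_0}$ is already an isomorphism of $\KK_0$-varieties (on the generic fibers $t$ is invertible), so the identification $\widetilde{\cX}(\Ra) \to \cX(\Ra)$ given by the valuative criterion of properness is just the restriction of a $\KK_0$-algebraic bijection; there is no need to invoke Proposition~\ref{prop:galequiv} or reason separately about Galois-equivariance of its graph. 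Compatibility with specialization, i.e.\ $\spe_{\cX}\circ h = h_k\circ\spe_{\widetilde\cX}$, holds by functoriality of reduction, and that is all one needs to conclude $\Vol(\spe^{-1}_{\cX}(C)) = \Vol(\spe^{-1}_{\widetilde\cX}(h^{-1}(C)))$ before applying Theorem~\ref{thm:snc}.
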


\begin{proof}
The motivic nearby fiber can be computed on a log resolution for the pair $(U,f^{-1}(0))$ by means of Denef and Loeser's formula in \cite[3.5.3]{DL}.
The desired equalities then follow immediately from a comparison
with the formula in Theorem~\ref{thm:snc}.
\end{proof}

\begin{rem}\label{rem:compar}  Corollary~\ref{cor:compar} implies, in particular, that the motivic nearby fiber and the motivic Milnor fiber are well-defined
already without inverting $\LL$. This can also be proven directly: one can take Denef and Loeser's formula in terms of a log resolution
 as a definition and use weak factorization to check that it does not depend on the choice of the log resolution. Corollary~\ref{cor:compar} also provides a natural extension
 of the definitions of the motivic nearby fiber and motivic Milnor fiber to the case where the generic fiber of $f$ is singular. This extension coincides with the constructions of Bittner \cite{bittner} and
 Guibert-Loeser-Merle \cite{GLM} after inverting $\LL$. We will discuss a refinement of Corollary~\ref{cor:compar} to an equality in the relative Grothendieck ring over $f^{-1}(0)$ in Corollary~\ref{cor:DLrel}.

  Corollary~\ref{cor:compar} is
 closely related to similar comparison results by Hrushovski and Loeser for the motivic zeta function \cite{HL},
 but our approach is more direct if one only wants to retrieve the motivic nearby fiber; in particular, we do not need to consider the more complicated measured
 version of Hrushosvki and Kazhdan's motivic integration theory, and we avoid inverting $\LL$.
 \end{rem}

\section{A motivic Fubini theorem for the tropicalization map}\label{sec:fubini}
The aim of this section is to develop a flexible tool to compute the motivic volume for a large and interesting class of examples.
 The basic idea is to calculate the volume of a semi-algebraic subset $S$ of an algebraic torus $\mathbb{G}^n_{m,\KK_0}$ by first integrating
 over the fibers of the tropicalization map $\trop:(\Ka^{\times})^n\to \Q^n$ and then integrating the resulting function on $\Q^n$ with respect to the bounded Euler characteristic $\chi'$.
 As mentioned in the introduction, an important advantage of this approach is that $\chi'$ vanishes on bounded half-open intervals and half-bounded open intervals, which allows us in certain cases to discard
 the contribution of pieces of $S$ that are difficult to control directly. Concrete applications will be discussed in Section \ref{sec:appli}.

\subsection{The calculus of constructible functions}

\begin{definition}
Let $A$ be an abelian group and let $V$ be a finite dimensional affine space over $\QQ$. We say that a function $$\varphi : V \rightarrow A$$ is \emph{constructible}
if there exists a partition of $V$ into finitely many
 constructible subsets $\sigma_1,\ldots,\sigma_r$ such that $\varphi$ takes a constant value $a_i\in A$ on $\sigma_i$ for each $i$ in $\{1,\ldots,r\}$.
 In that case, we define the integral of $\varphi$ with respect
to the bounded Euler characteristic $\chi'$ by means of the formula
$$\int_{V}\varphi \,d\chi'=\sum_{i=1}^r a_i \chi'(\sigma_i) \quad \in A.$$
 If $\Gamma$ is a constructible subset of $V$, then we also write $$\int_{\Gamma}\varphi \,d\chi'=\int_{V}(\varphi \cdot \mathbf{1}_{\Gamma})\,d\chi'$$
 where $\mathbf{1}_\Gamma$ is the characteristic function of $\Gamma$.
\end{definition}

Integrals of constructible functions satisfy the following elementary Fubini property.

\begin{prop}\label{prop:fubini}
Let $A$ be an abelian group and let $f:V\to W$ be an affine linear map of finite dimensional affine spaces over $\Q$.
Let $\varphi:V\to A$ be a constructible function. Then the function
$$f_{\ast}\varphi:W\to A:w\mapsto \int_{f^{-1}(w)}\varphi \,d\chi'$$ is constructible, and
$$\int_{V}\varphi \,d\chi'=\int_{W}f_\ast\varphi \,d\chi'.$$
\end{prop}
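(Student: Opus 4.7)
The plan is to deduce the identity from the uniqueness characterization of $\chi'$, after a reduction to indicator functions of polyhedra. By the definition of constructibility, any constructible function $\varphi\colon V\to A$ is a finite $A$-linear combination of characteristic functions $\mathbf{1}_{\Gamma_i}$ with $\Gamma_i$ constructible, and each such $\mathbf{1}_\Gamma$ is in turn a $\Z$-linear combination of characteristic functions of polyhedra (write $\Gamma$ as a finite Boolean combination of polyhedra $P_1,\dots,P_r$ and expand via inclusion-exclusion, using that finite intersections of polyhedra are polyhedra). Both the constructibility of $f_*\varphi$ and the claimed equality $\int_V \varphi\,d\chi' = \int_W f_*\varphi\,d\chi'$ are linear in $\varphi$, so it suffices to treat the case $\varphi = \mathbf{1}_P$ for a single polyhedron $P\subset V$.

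For such a $P$ the key observation is simply that $f_*\mathbf{1}_P = \mathbf{1}_{f(P)}$. Indeed, for any $w\in W$, the fiber $P\cap f^{-1}(w)$ is the intersection of a polyhedron with an affine subspace, hence is itself a polyhedron in $f^{-1}(w)$; by the defining property of $\chi'$ its bounded Euler characteristic is $1$ when the fiber is nonempty and $0$ otherwise. Since the image of a polyhedron under an affine linear map is a polyhedron (standard Fourier--Motzkin elimination in convex geometry), $f(P)\subset W$ is constructible, and $f_*\mathbf{1}_P = \mathbf{1}_{f(P)}$ is a constructible function. This establishes the constructibility statement.

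For the Fubini identity, consider the $\Z$-valued set function
\[
\widetilde{\chi'}\colon\Gamma\longmapsto \int_W f_*\mathbf{1}_\Gamma\,d\chi'
\]
on the Boolean algebra of constructible subsets of $V$. It is additive on disjoint unions by linearity of $f_*$ and of the integral against $\chi'$, and for a nonempty polyhedron $P$ the computation above together with $\chi'(f(P))=1$ gives $\widetilde{\chi'}(P)=1$. By the uniqueness characterization of the bounded Euler characteristic recalled earlier, $\widetilde{\chi'}=\chi'$ on every finite-dimensional affine $\Q$-space, which is precisely the identity $\int_V\mathbf{1}_\Gamma\,d\chi' = \int_W f_*\mathbf{1}_\Gamma\,d\chi'$. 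Extending by $A$-linearity finishes the proof.

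The only non-formal ingredient is the convex-geometric fact that $f(P)$ is a polyhedron whenever $P$ is; I would invoke this as standard rather than prove it, so I do not anticipate any real obstacle — the whole argument is designed to sidestep any explicit case analysis on the qualitative shape of the fibers by routing everything through the uniqueness of $\chi'$.
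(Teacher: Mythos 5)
Your proof is correct and takes essentially the same approach as the paper: reduce by $A$-linearity to the case $\varphi = \mathbf{1}_P$ for a polyhedron $P$, observe that fibers $P \cap f^{-1}(w)$ and the image $f(P)$ are polyhedra, and conclude using the defining normalization of $\chi'$. Your appeal to the uniqueness characterization of $\chi'$ at the end is a slightly more elaborate way of packaging the direct one-line verification in the paper, but it is the same underlying argument.
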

\begin{proof}
By $A$-linearity of the integral, we may assume that $\varphi$ is the characteristic function of a polyhedron $\Gamma$ in $V$.
 Then the result follows at once from the fact that $f(\Gamma)$ and the fibers of $f$ are polyhedra in $W$, and that $\chi'$ assigns the value $1$ to every non-empty polyhedron.
\end{proof}

We can now formulate the main result of this paper.

\begin{theorem}[Motivic Fubini theorem for the tropicalization map]\label{thm:fubini}
 Let $Y$ be a variety over $\KK_0$.
Let $n$ be a positive integer and let $S$ be a semi-algebraic subset of $\mathbb{G}^n_{m,\KK_0}\times_{\KK_0}Y$.
Denote by $$\pi:\mathbb{G}^n_{m,\KK_0}\times_{\KK_0}Y\to \mathbb{G}^n_{m,\KK_0}$$ the projection morphism.
 Then the function
 $$(\trop \circ \pi)_*\mathbf{1}_S:\Q^n\to \gro^{\gal}(\Var_k):w\mapsto \Vol(S\cap (\trop\circ \pi)^{-1}(w))$$
 is constructible, and
 $$\Vol(S)=\int_{\Q^n}(\trop\circ \pi)_*\mathbf{1}_S \,d\chi'$$ in
 $\gro^{\gal}(\Var_k)$.
\end{theorem}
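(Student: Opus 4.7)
The plan is to follow the two-stage strategy described in the introduction: first prove the theorem in the sch\"on case, and then bootstrap to arbitrary semi-algebraic $S$ via the decomposition result of Luxton and Qu.

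For the first stage, by additivity of $\Vol$ and of integration against $\chi'$, together with Noetherian induction on the Zariski closure of $\pi(S)$ in $\GG_{m,\KK_0}^n \times_{\KK_0} Y$, I would reduce to the case where $\pi(S)$ is contained in a closed sch\"on subvariety $U \subset \GG_{m,\KK_0}^n$. Such a $U$ admits a polyhedral complex structure on $\trop(U) \subset \Q^n$ whose relatively open cells $\sigma$ have the property that the initial degeneration $\inn_w U$ is, canonically up to a twist by $t^w$, isomorphic to a fixed smooth $k$-variety $U_\sigma$ as $w$ ranges over $\relint(\sigma) \cap \Q^n$. Concretely, for $w = v/d \in \relint(\sigma)$, multiplication by $t^w$ should give a $\gal$-equivariant bijection between $\trop^{-1}(w) \cap U(\Ka)$ and the $\Ra$-points of a smooth $R$-model of $U_\sigma \times_k \TT_k^w$, where $\TT_k^w$ is the twisted torus from Section~\ref{ss:notation}. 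Applying Theorem~\ref{thm:HK} to this model (or the snc formula of Theorem~\ref{thm:snc}) then yields an expression for $\Vol(S \cap (\trop \circ \pi)^{-1}(w))$ that depends only on the cell $\sigma$; in particular $(\trop \circ \pi)_* \mathbf{1}_S$ is constant on $\relint(\sigma) \cap \Q^n$, which gives the constructibility claim.

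With this local constancy in hand, the sch\"on case of the Fubini identity follows by summing the contributions of the open cells: the left-hand side is computed by Theorem~\ref{thm:snc} applied to a $\gal$-equivariant snc model of $U \times_{\KK_0} Y$ whose strata refine the polyhedral decomposition, while the right-hand side is the sum, over cells $\sigma$, of the constant fiber volume times $\chi'(\relint(\sigma))$. The two expressions are matched by invoking $\Vol(\trop^{-1}(\Gamma)) = \chi'(\Gamma)(\LL - 1)^n$ from Theorem~\ref{thm:HK} cell by cell and using Proposition~\ref{prop:fubini} to handle the affine linear projections implicit in separating the torus factor from the sch\"on fiber.

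For the second stage, I would invoke the theorem of Luxton and Qu stating that any closed subvariety of $\GG_{m,\KK_0}^n$ admits, after suitable toric modifications and intersections with torus orbits, a finite stratification into locally closed sch\"on pieces. Combined with quantifier elimination in algebraically closed valued fields and Noetherian induction, this decomposes an arbitrary semi-algebraic $S$ into finitely many pieces each lying over a sch\"on subvariety in some torus, so the general theorem follows from the sch\"on case by additivity. The hard part will be the sch\"on case, specifically verifying that $\Vol(S \cap (\trop \circ \pi)^{-1}(w))$ is truly constant on $\relint(\sigma) \cap \Q^n$ with the correct $\gal$-equivariant structure, rather than merely after forgetting the Galois action: the $w$-dependent twist by $t^w$ shifts the $\gal$-action on the smooth model fiberwise, and the fact that this shift is absorbed in $\gro^{\gal}(\Var_k)$ hinges on the trivialization-of-linear-actions relation, which is where I expect most of the bookkeeping to concentrate.
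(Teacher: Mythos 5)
Your two-stage strategy (sch\"on case first, then Luxton--Qu decomposition) is the same as the paper's, and your intuition about where the difficulty concentrates in the sch\"on case -- the $w$-dependent twist by $t^w$ and its interaction with the trivialization-of-linear-actions relation -- is accurate. However, there are two genuine gaps in the reductions.

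First, your initial reduction does not actually dispose of $Y$. You propose to reduce ``to the case where $\pi(S)$ is contained in a closed sch\"on subvariety $U\subset \GG^n_{m,\KK_0}$'' by Noetherian induction on the Zariski closure of $\pi(S)$ in $\GG^n_{m,\KK_0}\times Y$ -- but $\pi(S)$ lives in $\GG^n_{m,\KK_0}(\Ka)$, not in the product, and more importantly, nothing in this step controls what $S$ looks like along the $Y$-direction, which is exactly the direction the Fubini statement integrates over last. The paper's reduction is to assume (by additivity and Noetherian induction) that $Y$ is a closed subvariety of a split torus $T$ with cocharacter lattice $N$, so that $S$ is semi-algebraic in the \emph{bigger} torus $\GG^n_{m,\KK_0}\times_{\KK_0}T$; one then applies the elementary Fubini property (Proposition~\ref{prop:fubini}) to the linear projection $\Q^n\times N_{\Q}\to\Q^n$ to reduce the statement to the case $Y=\Spec\KK_0$, with $\GG^n_{m,\KK_0}$ replaced by its product with $T$. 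This step is not cosmetic: without it, ``sch\"on'' has no meaning in the $Y$-direction and the whole reduction to sch\"on subvarieties of tori does not get off the ground.

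Second, your matching step in the sch\"on case proposes to compute $\Vol(S)$ via Theorem~\ref{thm:snc} applied to a $\gal$-equivariant snc model whose strata refine the polyhedral decomposition. The paper does not pass through snc models here; it uses the tropical compactification $\PP(\Sigma)$ directly and constructs an explicit semi-algebraic bijection $\psi\colon\spe^{-1}_{\cX}(\cY_k)\cap X(\Ka)\to\widetilde{T}_\gamma\times\cY(\Ra)$ defined over $\KK_0$ (Proposition~\ref{prop:schonfubini}), so that $\Vol(X_\gamma)=\chi'(\mathring\gamma)[\inn_\gamma X]$ falls out of Theorem~\ref{thm:HK}. Constructing a $\gal$-equivariant snc model that refines the fan strata is not an off-the-shelf operation, and it is unclear how you would extract the factor $\chi'(\mathring\gamma)$ for unbounded cells from an snc computation without essentially redoing the explicit-bijection argument anyway. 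Finally, in the second stage you will also need the re-embedding trick from Proposition~\ref{prop:schon}: a general semi-algebraic $S$ in $T$ is first turned into one of the form $X(\Ka)\cap\trop^{-1}(\Gamma)$ by replacing $X$ by its graph under $x\mapsto\bigl(x,f_1(x),g_1(x),\dots,f_r(x),g_r(x)\bigr)$, so that all valuation inequalities become tropical preimages; only then can Luxton--Qu be applied cell-by-cell. Your proposal omits this conversion, which is needed to even formulate the sch\"on case correctly.
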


We will split up the proof of Theorem~\ref{thm:fubini} into two main steps. We can immediately make a first reduction.
 By additivity and Noetherian induction, we may assume that $Y$ is a closed subvariety of a split $\KK_0$-torus $T$.
 Denote by $N$ the cocharacter lattice of $T$. Applying Proposition~\ref{prop:fubini} to the function
 $$\varphi:\Q^n\times N_{\Q}\to \gro^{\gal}(\Var_k):w\mapsto \Vol(S\cap \trop^{-1}(w))$$ and the projection $f:\Q^n\times N_{\Q}\to \Q^n$,
 we see that it suffices to prove Theorem~\ref{thm:fubini} for the function  $\trop_{\ast}\mathbf{1}_{S}$ on $\Q^n\times N_{\Q}$, replacing $\GG^n_{m,\KK_0}$ by its product with $T$.
  Thus we may assume that $Y=\Spec \KK_0$.
 We split up the remainder of the proof into two steps.

\subsection{Step 1: the sch{\"o}n case}
We first consider the case of a sch{\"o}n integral closed subvariety $X_0$ of $\mathbb{G}^n_{m,\KK_0}$.
 The sch{\"o}nness condition means that $X=X_0\times_{\KK_0} \KK$ satisfies the following non-degeneracy condition: for every
 $a\in (\KK^{\times})^n$, the schematic closure of $a^{-1}X$ in $\GG^n_{m,R}$ is smooth over $R$. We denote this schematic closure by $\cX^a$.
 In \cite[3.11]{NPS} we have given a tropical formula for $\Vol(X)$ without taking the $\gal$-action into account. We will now explain how to refine this formula
 to keep track of the $\gal$-action.
 Actually, we will prove a slightly more general result, which applies to any semi-algebraic subset $S$ of $X_0$
 of the form $X(\Ka)\cap \trop^{-1}(\Gamma)$ where $\Gamma$ is a constructible subset of $\Q^n$.

Let $a\in (\KK^{\times})^n$ and set $w=\trop(a)$. Recall from Section \ref{ss:notation} that we denote by $\GG^{w}_{m,k}$ the $k$-torus $\GG^n_{m,k}$ endowed with the left $\gal$-action with weight vector $w$.
 Then $\cX^a_k$ is stable under the $\mu_d(k)$-action on $\GG^w_{m,k}$ and thus inherits a good action of $\gal$.
 It follows immediately from the definition that $\cX^a_k$, with its $\gal$-action, depends only on $w$, and not on $a$. It is called the {\em initial degeneration} of
 $X$ at $w$ and denoted by $\mathrm{in}_wX$.

  Let $\Sigma$ be a {\em $\Q$-admissible  tropical fan} for $X$ in $\RR^n\times \RR_{\geq 0}$ in the sense of \cite[12.1]{gubler}
  (henceforth, we will simply speak of a {\em tropical fan}). It defines a toric scheme  $\PP(\Sigma)$ over $R$.
   If we write $\cX$ for the schematic closure of $X$ in $\PP(\Sigma)$, then $\cX$ is proper over $R$ and the multiplication map
  $$m:\TT\times_R \cX\to \PP(\Sigma)$$
is faithfully flat. The condition that
 $X$ is sch\"{o}n is equivalent to the property that $m$ is smooth.
  The Galois action of $\gal$ on $\GG^n_{m,\KK}$ extends uniquely to $\PP(\Sigma)$, and $\cX$ is stable under this action.

Intersecting the cones of $\Sigma$ with $\QQ^n\times\{1\}$, we obtain a $\QQ$-rational polyhedral complex in $\QQ^n$ that we denote by $\Sigma_1$.
The support of $\Sigma_1$ is equal to $\trop(X(\Ka))$, by \cite[12.5]{gubler}.  For every cell $\gamma$ in $\Sigma_1$, we denote its relative interior by $\mathring{\gamma}$.
 We
write $X_{\gamma}$ for the semi-algebraic subset
   $$X(\Ka)\cap \trop^{-1}(\mathring{\gamma})$$
of $X$. As $\gamma$ ranges over the cells in $\Sigma_1$, the sets $X_{\gamma}$ form a partition of $X(\Ka)$. We denote by
$\cX_k(\gamma)$ the intersection of $\cX_k$ with the torus orbit of $\PP(\Sigma)_k$ corresponding to the cell $\gamma$. Then we can also write $X_\gamma$ as
$$X_\gamma=\spe^{-1}_{\cX}(\cX_k(\gamma))\cap X(\Ka).$$

\begin{lemma}\label{lemm:init} Let $X_0$ be a sch{\"o}n integral closed subvariety of $\mathbb{G}^n_{m,\KK_0}$ and let $\Sigma$ be a tropical fan for $X_0$
in $\RR^n\times \RR_{\geq 0}$.
 Let $\gamma$ be a cell of $\Sigma_1$. Then $\cX_k(\gamma)$ is smooth over $k$. If $w\in \mathring{\gamma}$ then the class of $\mathrm{in}_wX$ in $\gro^{\gal}(\Var_k)$
 is equal to $[\cX_k(\gamma)](\LL-1)^{\mathrm{dim}(\gamma)}$.
In particular, it only depends on $\gamma$, and not on $w$.
\end{lemma}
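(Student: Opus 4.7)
The proof splits into two essentially independent parts.

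\emph{Smoothness of $\cX_k(\gamma)$.} This follows directly from the sch\"on hypothesis. The multiplication map $m\colon \TT\times_R \cX \to \PP(\Sigma)$ is smooth by assumption, hence so is its special fiber $m_k$. The torus orbit $O_\gamma$ is smooth in $\PP(\Sigma)_k$, and because the $\TT_k$-action preserves torus orbits we have $m_k^{-1}(O_\gamma) = \TT_k \times_k \cX_k(\gamma)$. Smoothness of $m_k$ implies smoothness of this preimage, and therefore $\cX_k(\gamma)$ is smooth over $k$.

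\emph{The Grothendieck ring formula.} Let $\TT_k(\gamma) \subset \TT_k$ denote the pointwise stabilizer of $O_\gamma$; this is a subtorus of dimension $\dim(\gamma)$. The plan is to exhibit $\mathrm{in}_w X$ as a $\TT_k(\gamma)$-torsor over $\cX_k(\gamma)$ that admits a $\gal$-equivariant product decomposition. The torsor structure can be seen tropically: the star fan of $|\Sigma_1|$ at $w$ contains the linear span of $\gamma$, so $\mathrm{in}_w X \subset \TT_k$ is invariant under the translation action of $\TT_k(\gamma)$, and a local analysis using the sch\"on compactification (as in \cite[3.11]{NPS}) identifies the quotient with $\cX_k(\gamma) \subset O_\gamma$. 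To upgrade to an equivariant product decomposition, one constructs a $\gal$-equivariant section by lifting $\cX_k(\gamma)$ to a $\gal$-stable transverse slice inside $\cX$ (using the smoothness provided by sch\"onness together with the henselian arguments from the proof of Theorem~\ref{thm:HK}) and transporting it via the chosen $a \in \TT(\Ka)$ with $\val(a)=w$. The weight-$w'$ action on the fiber is dictated by the cocycle $\sigma \mapsto \sigma(a)/a$.

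\emph{Conclusion.} Granted the equivariant decomposition $\mathrm{in}_w X \cong \cX_k(\gamma) \times_k \TT_k(\gamma)^{w'}$, where $\cX_k(\gamma)$ carries the trivial $\gal$-action and $\TT_k(\gamma)^{w'}$ carries the induced weight action, we obtain
$$[\mathrm{in}_w X] = [\cX_k(\gamma)] \cdot [\TT_k(\gamma)^{w'}] \quad \text{in } \gro^{\gal}(\Var_k).$$
The identity $[\TT_k(\gamma)^{w'}] = (\LL-1)^{\dim(\gamma)}$ follows by scissor-decomposing $\AA^{\dim(\gamma)}_k$ (with its natural linear weight-$w'$ action) as the open torus together with its coordinate strata, invoking the trivialization of linear actions (which forces $[\AA^d_k \text{ with linear action}] = \LL^d$ for every $d$), and inducting on dimension, using the binomial identity $(1+(\LL-1))^d = \LL^d$. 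In particular, the value $(\LL-1)^{\dim(\gamma)}$ is independent of the auxiliary weight $w'$, so the class $[\cX_k(\gamma)](\LL-1)^{\dim(\gamma)}$ depends only on $\gamma$ and not on the chosen $w \in \mathring{\gamma}$.

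\emph{Main obstacle.} The technical heart of the argument is the $\gal$-equivariant refinement of the product decomposition. The underlying non-equivariant structure is in \cite[3.11]{NPS}, but the Galois bookkeeping is delicate: one must verify that the canonical $\TT_k(\gamma)$-torsor structure on $\mathrm{in}_w X$ admits a $\gal$-equivariant trivialization, and that the induced action on the fiber is indeed of the expected weight, so that the reduction to the elementary Grothendieck-ring computation is justified.
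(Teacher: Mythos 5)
Your argument for the smoothness of $\cX_k(\gamma)$ is correct and is a clean, slightly different route from the paper's: you deduce it from smoothness of the multiplication map $m\colon\TT\times_R\cX\to\PP(\Sigma)$ restricted over the orbit $O_\gamma$, whereas the paper deduces it from the smoothness of $\cX^a$ (the closure of $a^{-1}X$ in $\GG^n_{m,R}$) together with the fact that $\cX^a_k$ is a torsor over $\cX_k(\gamma)$. Both are valid.

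For the Grothendieck-ring formula, however, there is a concrete error and an unresolved gap. The error: you assert that ``$\cX_k(\gamma)$ carries the trivial $\gal$-action.'' This is false. If $\gamma$ is a vertex $\{w\}$ with $w\notin\ZZ^n$, then $\cX_k(\gamma)$ is canonically identified with $\mathrm{in}_wX$, which carries the nontrivial weight-$w$ action of $\gal$ through $\mu_d(k)$. The paper's own proof of Proposition~\ref{prop:schonfubini} makes this explicit: the $\gal$-action on $\TT^w_k/\widetilde{\TT}_k$, inside which $\cX_k(\gamma)$ sits, is noted to factor through a \emph{free} $\mu_d(k)$-action. The class $[\cX_k(\gamma)]$ in the statement of the lemma refers to $\cX_k(\gamma)$ with this natural, generally nontrivial, $\gal$-action. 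Relatedly, your description of the $\gal$-action on the torus factor as the ``weight-$w'$ action dictated by the cocycle $\sigma\mapsto\sigma(a)/a$'' conflates two things: that cocycle governs the translation action of $\gal$ on the total space of the torsor (and hence on its base), but the torsor \emph{structure} is $\gal$-equivariant with $\gal$ acting trivially on the torsor group $\widetilde{\TT}_k$, because multiplication by $\widetilde{\TT}_k$ commutes with translations in the ambient torus.

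The gap: you reduce everything to a ``$\gal$-equivariant product decomposition $\mathrm{in}_wX\cong\cX_k(\gamma)\times_k\TT_k(\gamma)^{w'}$,'' which requires a $\gal$-equivariant section of the torsor, and you yourself flag this as ``the technical heart'' without establishing it. Such a section need not exist: the obstruction is a cohomology class in $H^1(\gal,\Hom_k(\cX_k(\gamma),\widetilde{\TT}_k))$ whose vanishing is not clear. The paper deliberately avoids asking for an equivariant trivialization. It works instead with the $\gal$-equivariance of the torsor \emph{structure map} (with trivial action on $\widetilde{\TT}_k$), splits $\cX^a_k$ into a product of line bundles over $\cX_k(\gamma)$ with the zero sections removed such that $\gal$ acts fiberwise linearly (citing [BuNi, 6.1.1]), and then applies the ``trivialization of linear actions'' relation in $\gro^{\gal}(\Var_k)$ directly to each line-bundle factor. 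This replaces the would-be equivariant trivialization by a softer structure that suffices for the Grothendieck-ring computation. Your final step, the inductive computation $[\,\GG^d_m\text{ with weight action}\,]=(\LL-1)^d$, is fine, but it only kicks in once the decomposition it presupposes has been produced, and that is precisely what is missing.
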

\begin{proof}
 Let $V$ be the $\Q$-linear subspace of $\Q^n$ generated by vectors of the form
 $w-w'$ with $w,w'$ in $\gamma$. We denote by $\widetilde{\TT}$ the split $R_0$-torus with cocharacter lattice
 $V\cap \ZZ^n$.
 Let $a$ be a point of $(\KK^{\times})^n$ such that $\trop(a)=w$.

 It is explained in the proof of \cite[3.10]{NPS} that $\cX^a_k$ is a trivial $\widetilde{\TT}_k$-torsor over $\cX_k(\gamma)$ in a natural way.
 Thus smoothness of $\cX_k(\gamma)$ follows from that of $\cX^a_k$.
 Moreover, an inspection of the proof reveals that the torsor structure is $\gal$-equivariant, where $\gal$ acts trivially on $\widetilde{\TT}_k$.
  This means that we can write $\cX^a_k$ as a product of line bundles on $\cX_k(\gamma)$ with the zero sections removed such that $\gal$ acts linearly on each factor -- see \cite[6.1.1]{BuNi}.
   The triviality of linear actions on vector spaces in $\gro^{\gal}(\Var_k)$ now implies that $$[\mathrm{in}_wX]=[\cX_k(\gamma)](\LL-1)^{\mathrm{dim}(\gamma)}.$$
\end{proof}
 Thus if $\gamma$ is a cell of $\Sigma_1$, no ambiguity arises from writing $[\mathrm{in}_\gamma X]$ for the class of $\mathrm{in}_wX$ in $\gro^{\gal}(\Var_k)$, where $w$ is any point in $\mathring{\gamma}$.

 \begin{prop}\label{prop:schonfubini}
Let $X_0$ be a sch{\"o}n integral closed subvariety of $\mathbb{G}^n_{m,\KK_0}$ and let $\Sigma$ be a tropical fan for $X_0$
in $\RR^n\times \RR_{\geq 0}$. Let $\Gamma$ be a constructible subset of $\Q^n$ and set $S=X(\Ka)\cap \trop^{-1}(\Gamma)$.
 Then
\begin{equation}\label{eq:schon}
\Vol(S)=\sum_{\gamma}\chi'(\Gamma\cap \mathring{\gamma})[\mathrm{in}_\gamma X]
\end{equation}
 in $\gro^{\gal}(\Var_k)$, where $\gamma$ runs through the set of cells in $\Sigma_1$.
In particular,
 $$\Vol(X_0)=\sum_{\gamma\,\mathrm{bounded}}(-1)^{\mathrm{dim}(\gamma)}[\mathrm{in}_\gamma X]$$ in $\gro^{\gal}(\Var_k)$, where the sum is taken over the bounded
 cells $\gamma$ of $\Sigma_1$.
 \end{prop}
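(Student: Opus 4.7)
Plan. My plan is to partition $S$ according to the cells of $\Sigma_1$ and reduce the computation on each cell to the two elementary cases of Theorem~\ref{thm:HK}. By additivity of the motivic volume,
$$\Vol(S)=\sum_\gamma \Vol\bigl(X(\Ka)\cap \trop^{-1}(\Gamma\cap \mathring{\gamma})\bigr),$$
the sum being over the cells $\gamma$ of $\Sigma_1$, so it suffices to prove
$$\Vol\bigl(X(\Ka)\cap \trop^{-1}(\Gamma\cap \mathring{\gamma})\bigr)=\chi'(\Gamma\cap \mathring{\gamma})[\inn_\gamma X]$$
for each individual $\gamma$.

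Before analysing a fixed cell, I would verify that both sides of \eqref{eq:schon} are invariant under refinement of $\Sigma$: any refinement subdivides $\mathring{\gamma}$ into the disjoint union of the relative interiors $\mathring{\gamma}'$ of new cells $\gamma'\subseteq \mathring{\gamma}$, and Lemma~\ref{lemm:init} yields $[\inn_{\gamma'}X]=[\inn_\gamma X]$ for each such $\gamma'$, so additivity of $\chi'$ matches the two sides. This reduces us to the case where $\Sigma$ is unimodular, in which case sch\"onness implies that $\cX\subset \PP(\Sigma)$ is regular with special fiber supported on a strict normal crossings divisor, and the stratum $\cX_k(\gamma)$ is the open stratum $E_J^o$ indexed by the ray set $J$ of the cone $\sigma$ of $\Sigma$ associated with $\gamma$.

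The core computation is then a local analogue of the proof of Theorem~\ref{thm:snc}. Working \'etale-locally on $\cX_k(\gamma)$, I would construct a $\gal$-equivariant semi-algebraic bijection defined over $\KK_0$ between $X(\Ka)\cap \trop^{-1}(\Gamma\cap \mathring{\gamma})$ and
$$\cX_k(\gamma)(\Ra)\,\times\,\bigl(\spe^{-1}_{\cU(\sigma)}(O)\cap \trop^{-1}(\Gamma\cap \mathring{\gamma})\bigr),$$
where $\cU(\sigma)$ is the affine toric chart appearing in claim~(c) of the proof of Theorem~\ref{thm:snc}. This is the same henselian gluing argument as there, now applied to an \'etale-local product model $\cX_k(\gamma)\times_k \cU(\sigma)$ of $\cX$ along $\cX_k(\gamma)$ provided by sch\"onness. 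Theorem~\ref{thm:HK}(1) gives $\Vol(\cX_k(\gamma)(\Ra))=[\cX_k(\gamma)]$, and the coordinate change of claim~(c) in the proof of Theorem~\ref{thm:snc} identifies $\spe^{-1}_{\cU(\sigma)}(O)$ with $\trop^{-1}(\mathring{\gamma})$ inside a $\mathrm{dim}(\gamma)$-dimensional split torus carrying the $\gal$-action of weight $w$ (for any $w\in\mathring{\gamma}$); Theorem~\ref{thm:HK}(2) then yields $\Vol\bigl(\spe^{-1}_{\cU(\sigma)}(O)\cap \trop^{-1}(\Gamma\cap \mathring{\gamma})\bigr)=\chi'(\Gamma\cap \mathring{\gamma})(\LL-1)^{\mathrm{dim}(\gamma)}$. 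Multiplying the two factors and invoking Lemma~\ref{lemm:init} in the form $[\inn_\gamma X]=[\cX_k(\gamma)](\LL-1)^{\mathrm{dim}(\gamma)}$ produces the claimed per-cell formula.

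For the ``in particular'' statement, take $\Gamma=\QQ^n$ and note that $\chi'(\mathring{\gamma})=(-1)^{\mathrm{dim}(\gamma)}$ when $\gamma$ is a bounded polytope (its relative interior is an open cell) and $\chi'(\mathring{\gamma})=0$ when $\gamma$ is unbounded (its relative interior then has an open half-line factor, which has $\chi'=0$). The main obstacle is the \'etale-local $\gal$-equivariant product decomposition of $\cX$ around $\cX_k(\gamma)$ with the correct $\gal$-weight $w$ on the toric factor: this is where the sch\"on hypothesis does its real work, and the analysis closely parallels claim~(c) of the proof of Theorem~\ref{thm:snc}.
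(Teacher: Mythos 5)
Your first reduction (refinement invariance of both sides plus additivity, reducing to $\Gamma = \mathring{\gamma}$ for a single cell) matches the paper exactly. You then diverge: the paper never passes through Theorem~\ref{thm:snc} or a strict normal crossings structure. Instead it directly builds the semi-algebraic bijection
\[
\psi : \spe^{-1}_{\cX}(\cY_k)\cap X(\Ka)\longrightarrow \widetilde{T}_\gamma\times \cY(\Ra),\qquad
\psi(x)=\bigl(p(x),\,\rho(q(p(x)^{-1}\cdot x))\bigr),
\]
where $\widetilde T$ is the subtorus with cocharacter lattice $V\cap\Z^n$ spanned by $\mathring{\gamma}-w$, $p$ is a $\gal$-equivariant monomial projection to $\widetilde T$, $q$ is the quotient to $\TT^w/\widetilde\TT$, and $\rho$ is a henselian retraction onto $\cY(\Ra)$. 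It then checks $\gal$-equivariance and $\KK_0$-definability. This construction is the technical heart of the argument, and your proposal replaces it by an appeal to Theorem~\ref{thm:snc} by analogy.

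There are two genuine gaps in that replacement. First, the SNC reduction does not land you in the setting of Theorem~\ref{thm:snc}: that theorem is stated for a model over $R_0=k\llb t\rrb$, whereas the closure $\cX$ of $X$ in $\PP(\Sigma)$ is a model over $R$ (equivalently, the vertices of $\Sigma_1$ are in general only in $\Q^n$, not $\Z^n$). A unimodular refinement of $\Sigma$ does not make the corresponding $R_0$-toric scheme $\PP_0(\Sigma)$ reduced; its special fiber has multiplicities, and the formula of Theorem~\ref{thm:snc} then involves the nontrivial $\mu_{N_J}(k)$-covers $\widetilde{E}^o_J$ rather than the strata $\cX_k(\gamma)$ that appear in Lemma~\ref{lemm:init}. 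You would need an extra step identifying the contribution of $\widetilde{E}^o_J$ with $[\cX_k(\gamma)]$ (including the Galois weight), which is not automatic and is precisely what the paper's direct construction sidesteps. Second, the key per-cell identity rests on a $\KK_0$-defined, $\gal$-equivariant semi-algebraic bijection; you assert its existence ``working \'etale-locally by analogy to claim~(c) of Theorem~\ref{thm:snc}'' but do not construct it, verify its $\gal$-equivariance, or establish that it is defined over $\KK_0$ (the paper does this by Proposition~\ref{prop:galequiv} and a linearization argument on completed local rings). Without that, the invocation of Theorem~\ref{thm:HK} is not justified.

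A way to repair your outline closer to your own idea: instead of trying to force the model over $R_0$, pass to a finite extension $K'_0$ with $e\cdot w\in\Z^n$ and uniformizer $t^{1/e}$, where the closure of $X$ in the corresponding $R'$-toric scheme does become reduced SNC, apply Theorem~\ref{thm:snc} there, and then descend using the $\mathrm{Res}^{\gal}_{\gal'}$-compatibility diagram after Theorem~\ref{thm:HK}. But this descent step reintroduces exactly the Galois bookkeeping the paper's proof is organized around, and one still has to check that the result is independent of $e$. The paper's route through the explicit map $\psi$ is the cleaner path.
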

\begin{proof}
 In order to deduce the formula for $\Vol(X_0)$ from equation \eqref{eq:schon},
  it suffices to observe that $\chi'(\mathring{\gamma})=(-1)^{\mathrm{dim}(\gamma)}$ when $\gamma$ is bounded, and $\chi'(\mathring{\gamma})=0$ otherwise
 -- see the proof of \cite[3.11]{NPS}.

 Therefore, we only need to prove the validity of \eqref{eq:schon}. Since both sides are additive in $\Gamma$ and invariant under refinement of the fan $\Sigma$, we may assume that
 $\Gamma=\mathring{\gamma}$ for some cell $\gamma$ in $\Sigma_1$. Then we must show that
 $\Vol(X_\gamma)=\chi'(\mathring{\gamma})[\mathrm{in}_\gamma X]$.
 We will follow a similar construction as in the proof of \cite[3.10]{NPS}, but we will need to refine it to take the $\gal$-action into account.

  We fix a point $w$ in $\mathring{\gamma}$ and set $t^{w}=(t^{w_{1}},\ldots,t^{w_{n}})\in (\KK^{\times})^n$.
 Let $\TT=\GG^n_{m,R}$ and $T=\GG^n_{m,K}$.
  We denote by $\TT^w$ the torus $\TT$ endowed with the $\gal$-action with weight vector $w$, and by $T^w$ its generic fiber.
   Set $$\mathring{\gamma}_w=\{v\in \Q^n\,|\,w+v\in \mathring{\gamma}\}$$ and
   let $V$ be the $\Q$-linear subspace of $\Q^n$ generated by $\mathring{\gamma}_w$.  We denote by $\widetilde{\TT}$ the split $R$-torus with cocharacter lattice
 $V\cap \ZZ^n$. This is a subtorus of $\TT$. We write $\widetilde{T}$ for the generic fiber of $\widetilde{\TT}$, and $\widetilde{T}_{\gamma}$ for the inverse image of
 $\mathring{\gamma}_w$ under the tropicalization map $\widetilde{T}(\Ka)\to V\cap \Q^n$.
   The quotient  $\TT_k/\widetilde{\TT}_k$ acts freely and transitively on the torus orbit
 $O(\gamma)$ of $\PP(\Sigma)_k$ corresponding to the cell $\gamma$. If we take the specialization of $t^{w}$ as base point, then we obtain a $\gal$-equivariant isomorphism between
  $O(\gamma)$ and
 $\TT^{w}_k/\widetilde{\TT}_k$, where we let $\widetilde{\TT}$ act on $\TT^{w}$ by multiplication.

 The action of $\gal$ on $\TT^w_k/\widetilde{\TT}_k$ factors through a free action of $\mu_d(k)$ for some $d>0$. Thus  we can find locally on $\TT^w_k/\widetilde{\TT}_k$
 a $\gal$-equivariant \'etale morphism to $\AA^r_k$ equipped with the trivial $\gal$-action, for some $r\geq 0$.
  Taking the base change to $R$, this implies that there exists around each point of $\cX_k(\gamma)$ a $\gal$-stable open neighbourhood $\cU$ in $\TT^{w}/\widetilde{\TT}$ that
 admits an \'etale $\gal$-equivariant morphism $$h:\cU\to \AA^r_{R}.$$ Moreover, since $\cX_k(\gamma)$ is smooth,
 we can arrange that $\cX_k(\gamma)\cap \cU=h^{-1}(\AA^s_k)$ for some $0\leq s\leq r$.
   We set $\cY=\cU\times_{\AA^r_{R}}\AA^s_{R}$ and we endow it
 with the diagonal action of $\gal$. We will construct
 a semi-algebraic bijection
 $$\psi:\spe^{-1}_{\cX}(\cY_k)\cap X(\KK)\to  \widetilde{T}_{\gamma}\times\cY(R)$$
that is defined over $\KK_0$ and commutes with the specialization maps to $\cY_k$.
 The result then follows from the fact that $\Vol(\cY(\Ra))=[\cU\cap \cX_k(\gamma)]$ and
 $\Vol(\widetilde{T}_{\gamma})=\chi'(\mathring{\gamma})(\LL-1)^{\mathrm{dim}(\gamma)}$ by Theorem~\ref{thm:HK}.

 By the henselian property of $R$, the linear projection $\pi:\AA^r_R \to \AA^s_R$ induces
 a semi-algebraic retraction $$\rho:\spe^{-1}_{\cU}(\cY_k)\to \cY(\Ra),$$
 which can be described in the following way. Let $\cZ$  be the inverse image in $\cU\times_R \cY$ of the graph of $\pi$.
 Let $\Delta$ be the image of the diagonal embedding of $\cY_k$ into $\cU_k\times \cY_k$.
  Then the graph of $\rho$ is equal to $\spe^{-1}_{\cZ}(\Delta)$.
 Thus $\rho$ is defined over $\KK_0$, by Example~\ref{exam:descent}.

 We also choose
 a $\ZZ$-linear retraction of $(V\cap \ZZ^n)\to \ZZ^n$, and we denote by $p_0:T\to \widetilde{T}$ the morphism of
 tori associated with the morphism of cocharacter lattices $\ZZ^n\to (V\cap \ZZ^n)$.
 We write $$p:T^{w}\to \widetilde{T}$$ for the composition of the morphism
 $T^{w}\to T$ given by multiplication by $t^{-w}$, and the projection morphism $p_0$. Then $p$ is $\gal$-equivariant, and thus defined over $\KK_0$.
Finally, we write $q$ for the projection map $\TT^w\to \TT^w/\widetilde{\TT}$.

 Now we consider the semi-algebraic map
 $$\psi:\spe^{-1}_{\cX}(\cY_k)\cap X(\Ka)\to \widetilde{T}_\gamma\times \cY(\Ra):x\mapsto (p(x),\rho(q(p(x)^{-1}\cdot x))).$$
 It is defined over $\KK_0$.  Note that $p(x)$ indeed lies in $\widetilde{T}_{\gamma}$ because $\trop(p(x))$ is the image of $\trop(x)-w$ under the projection
  $\QQ^n\to (V\cap \QQ^n)$. We claim that $\psi$ is bijective. To prove this, it suffices to show that the map
  $$\varphi_a:(\cX^{a}\times_{\TT/\widetilde{\TT}}\cU)(\Ra)\to \widetilde{\TT}(\Ra)\times \cY(\Ra):x\mapsto (p_0(x), \rho(q(x)))$$
  is bijective for every $a$ in $\widetilde{T}_\gamma$. This can be done in the same way as in the proof of \cite[3.10]{NPS}:
   the morphism $p_0$ induces a splitting $\TT\cong \widetilde{\TT}\times_R (\TT/\widetilde{\TT})$, and under this splitting
  the restriction of the morphism
  $$(p_0,\pi \circ h \circ q):\cX^{a}\times_{\TT/\widetilde{\TT}}\cU\to  \widetilde{\TT}\times_R \AA^s_R$$
to the special fibers coincides with the \'etale morphism
$$\mathrm{Id}\times h_k:\widetilde{\TT}_k\times_k \cY_k\to \widetilde{\TT}_k\times_k \AA^s_k.$$
 Now the henselian property of $R$ implies that $\varphi_a$ is a bijection.
    \end{proof}

\begin{cor}\label{cor:schon}
Let $X_0$ be a sch{\"o}n integral closed subvariety of $\GG^n_{m,K_0}$ and let $\Gamma$ be a constructible subset of $\Q^n$. Then Theorem~\ref{thm:fubini} holds for
$S=X_0(\Ka)\cap \trop^{-1}(\Gamma)$.
\end{cor}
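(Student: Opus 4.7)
The plan is to reduce Corollary~\ref{cor:schon} directly to the identity \eqref{eq:schon} of Proposition~\ref{prop:schonfubini}, by computing the fiberwise motivic volumes of $\trop$ on $S$ and showing that they assemble into a constructible function whose integral against $\chi'$ matches the right-hand side of \eqref{eq:schon}.

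First, I would compute $\Vol(S\cap\trop^{-1}(w))$ for a fixed $w\in\Q^n$. The intersection is empty unless $w\in\Gamma$, so assume $w\in\Gamma$. Choose any $a\in(\KK^{\times})^n$ with $\trop(a)=w$, for instance the coordinate-wise Puiseux monomial $a=t^w$. The $\gal$-equivariant multiplication bijection $\TT^w(\Ra)\to\trop^{-1}(w)$ recorded in Section~\ref{ss:notation} pulls $X(\Ka)\cap\trop^{-1}(w)$ back to the set of $\Ra$-points of the schematic closure $\cX^a$ of $a^{-1}X$ in $\GG^n_{m,R}$. By sch\"onness, $\cX^a$ is a smooth $R$-model of $a^{-1}X$ carrying a good $\gal$-action whose special fiber is, by definition, $\inn_w X$. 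Theorem~\ref{thm:HK}(1) then gives $\Vol(S\cap\trop^{-1}(w))=[\inn_w X]$ in $\gro^{\gal}(\Var_k)$.

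Next I would invoke Lemma~\ref{lemm:init}: if $\gamma$ is the cell of $\Sigma_1$ whose relative interior contains $w$, then $[\inn_w X]=[\inn_\gamma X]=[\cX_k(\gamma)](\LL-1)^{\dim\gamma}$, so the value depends only on $\gamma$. Since $|\Sigma_1|=\trop(X(\Ka))$ and the relative interiors of the cells of $\Sigma_1$ partition this support, $\trop_*\mathbf{1}_S$ takes the constant value $[\inn_\gamma X]$ on each constructible piece $\mathring{\gamma}\cap\Gamma$ and vanishes off $|\Sigma_1|\cap\Gamma$. This exhibits $\trop_*\mathbf{1}_S$ as a constructible function, and integrating against $\chi'$ yields $\int_{\Q^n}\trop_*\mathbf{1}_S\,d\chi'=\sum_\gamma\chi'(\Gamma\cap\mathring{\gamma})[\inn_\gamma X]$, which coincides with $\Vol(S)$ by \eqref{eq:schon}.

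No step looks like a serious obstacle, as the real work has already been done in Proposition~\ref{prop:schonfubini} and Lemma~\ref{lemm:init}; the corollary amounts to repackaging \eqref{eq:schon} as a Fubini-style identity. The only point that requires mild care is the $\gal$-equivariance of the translation $x\mapsto a^{-1}x$ that converts $X(\Ka)\cap\trop^{-1}(w)$ into $\cX^a(\Ra)$, and this is precisely what the weighted-torus formalism $\TT^w$ of Section~\ref{ss:notation} is designed to encode.
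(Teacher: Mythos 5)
Your proposal is correct and follows essentially the same path as the paper's proof: identify $S\cap\trop^{-1}(w)$ with $\cX^a(\Ra)$, apply Theorem~\ref{thm:HK}(1) and Lemma~\ref{lemm:init} to get the constant value $[\mathrm{in}_\gamma X]$ on each $\mathring{\gamma}\cap\Gamma$, and conclude from formula \eqref{eq:schon}. If anything, you spell out the $\gal$-equivariance via the weighted torus $\TT^w$ more carefully than the published text, which compresses this bookkeeping into a single line.
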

\begin{proof}
 Let $a$ be an element in $(\KK^{\times})^n$ and write $\trop(a)=w$. We have
 $\trop^{-1}(w)=\cX^a(\Ra)$. Thus Theorem~\ref{thm:HK} implies that $\Vol(\trop^{-1}(a))=[\cX^a_k]$. If we denote by $\gamma$ the unique cell of $\Sigma_1$ that contains $a$ in its relative interior,
we know by Lemma~\ref{lemm:init} that $[\cX^a_k]=[\mathrm{in}_\gamma X]$.
Thus the function $w\mapsto \Vol(\trop^{-1}(w))$ is constant with value $[\mathrm{in}_\gamma X]$ on the
relative interior $\mathring{\gamma}$
 of each cell $\gamma$ of $\Sigma_1$. Hence, Theorem~\ref{thm:fubini} follows from formula \eqref{eq:schon}.
 \end{proof}

\subsection{Intrinsic torus embeddings}\label{ss:intrinsic}
Before we prove the general case of Theorem~\ref{thm:fubini}, we collect some results on intrinsic tori that
will be used in the proof.

\begin{definition}
Let $F$ be a field.  If $T$ and $T'$ are split $F$-tori, then a {\em monomial morphism} $T'\to T$
  is a morphism of $F$-varieties that can be written as the composition of a morphism of tori followed by a translation by an element in $T(F)$.
  \end{definition}

Let $T'\to T$ be a monomial morphism of $K_0$-tori. If we denote by $N$ and $N'$ the cocharacter lattices of $T$ and $T'$, respectively, then there exists a unique
 integral affine linear map $\nu:N'\to N$
such that the following diagram commutes:
 $$\begin{CD}
 T'(\Ka)@>\trop>> N'_\Q
\\ @VVV @VV\nu_{\Q} V
\\ T(\Ka)@>>\trop> N_\Q
 \end{CD}$$

Let $F$ be a field. A variety $U$ over $F$ is called {\em very affine} if it admits a closed embedding
into a split $F$-torus. Then the group $M_U=\mathcal{O}(U)^{\times}/F^{\times}$ is a free $\ZZ$-module of finite rank.
  The corresponding split $F$-torus $T'=\Spec F[M_U]$ is called the intrinsic torus of $U$. The choice of a section
  $s:M_U\to \mathcal{O}(U)^{\times}$ determines a closed embedding $f:U\to T'$, which we call an intrinsic torus embedding. Changing $s$ amounts to composing this embedding
  with a translation by a point in $T'(F)$. Now let $h:U\to T$ be a locally closed embedding into a split $F$-torus $T$, with character lattice $M$.
   This embedding induces a morphism of lattices $M\to M_U$ and hence a morphism of tori $g:T'\to T$.
 The morphisms $h$ and $h'=g\circ f$ correspond to two homomorphisms $\psi,\psi':M\to \mathcal{O}(U)^{\times}$ that coincide
 after composition with the projection map $\mathcal{O}(U)^{\times}\to M_U$.  This means that the image of the quotient $\psi/\psi'$ is contained in $F^{\ast}$, and
 $h$ and $h'$
 coincide up to translation by the point $a$ in $T(F)$ defined by $\psi/\psi'$. Composing the morphism $T'\to T$
 with the translation by $a$, we obtain a monomial morphism of tori $T'\to T$ such that the restriction to $U$ is an isomorphism onto $U$.

\subsection{Step 2: the general case}
We will now prove Theorem~\ref{thm:fubini} by reducing it to the case that was treated in Proposition~\ref{prop:schonfubini}.
 We will achieve this reduction by partitioning any semi-algebraic set into pieces to which  Proposition~\ref{prop:schonfubini} can be applied.

 \begin{lemma}\label{lemm:schon}
 Let $T$ be a split algebraic torus over $\KK_0$ and let $X$ be a subvariety of $T$.
 Then we can find:
 \begin{itemize}
 \item a partition of $X$ into subvarieties $U_1,\ldots,U_r$,
 \item for every $i$ in $\{1,\ldots,r\}$, a sch{\"o}n closed embedding $U_i\to T_i$ where $T_i$ is a split torus over $\KK_0$,
 \item for every $i$ in $\{1,\ldots,r\}$, a monomial morphism of tori $T_i\to T$  such that the restriction of $T_i(\Ka)\to T(\Ka)$ to $U_i(\Ka)$ is a bijection onto $U_i(\Ka)$ for every $i$.
 \end{itemize}
  \end{lemma}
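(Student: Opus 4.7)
The plan is to argue by Noetherian induction on $\dim X$, reducing the problem to the construction of a single dense open stratum $U \subset X$ together with the required data. The complement $X \smallsetminus U$ is closed in $X$ and of strictly smaller dimension, so the inductive hypothesis applies to it and the two outputs can be concatenated to yield the desired finite partition.

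For the single-stratum step, one may first decompose $X$ into its irreducible components and treat each one separately, so assume $X$ is irreducible. Passing to a nonempty affine open subvariety (which is very affine, as any affine open in a torus is), we may further assume that $X$ is very affine. Let $T_X$ be the intrinsic torus of $X$ as in Section~\ref{ss:intrinsic}, and fix an intrinsic torus embedding $X \hookrightarrow T_X$. The construction reviewed in Section~\ref{ss:intrinsic} shows that the given locally closed embedding $X \hookrightarrow T$ factors, up to translation, through a monomial morphism $g : T_X \to T$ whose restriction to $X(\Ka)$ is a bijection onto $X(\Ka) \subset T(\Ka)$.

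Now invoke the theorem of Luxton and Qu \cite{LQ}: every irreducible very affine variety admits a dense open subvariety which is sch{\"o}n in its intrinsic torus. Applied to $X$, this yields a dense open $U \subset X$ together with a sch{\"o}n closed embedding $U \hookrightarrow T_U$, where $T_U$ denotes the intrinsic torus of $U$. Applying the functorial content of Section~\ref{ss:intrinsic} once more to the locally closed embedding $U \hookrightarrow X \hookrightarrow T$, we obtain a monomial morphism $T_U \to T$ whose restriction to $U(\Ka)$ is a bijection onto $U(\Ka)$. This produces the required data on the stratum $U$, and the inductive hypothesis applied to $X \smallsetminus U$ completes the argument.

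The only nontrivial input is the Luxton-Qu theorem itself; everything else is essentially formal, since the universal property of the intrinsic torus embedding automatically supplies the monomial morphism $T_i \to T$ and ensures that its restriction to $U_i$ is a bijection onto the original copy of $U_i$ in $T$. The expected main obstacle is therefore not in the argument above, but in ensuring that the correct form of Luxton-Qu — producing a sch{\"o}n open dense subvariety inside the \emph{intrinsic} torus — is the one cited, so that the chaining of monomial morphisms through intrinsic tori described in Section~\ref{ss:intrinsic} gives the compatibility with $T$ that the lemma demands.
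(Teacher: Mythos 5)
Your argument is correct and follows essentially the same route as the paper: invoke Luxton--Qu to obtain a dense open $U$ with sch\"on intrinsic torus embedding, use the intrinsic torus construction from Section~\ref{ss:intrinsic} to produce the monomial morphism to $T$ restricting to a bijection on $U$, and close with Noetherian induction. Two small points: the preliminary reduction to the very affine case is redundant, since \cite[7.10]{LQ} as cited already supplies a \emph{very affine} open $U$ of $X$ directly; and the side remark that ``any affine open in a torus is very affine'' is imprecise as stated (you want affine opens of a locally closed subvariety of a torus, and the general affine open need not be very affine until one shrinks to a principal one), though this claim plays no essential role once Luxton--Qu is invoked. The intermediate construction of the intrinsic torus $T_X$ of $X$ itself is likewise unused, since only the intrinsic torus of the schön open $U$ enters the final data.
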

  \begin{proof}
  By \cite[7.10]{LQ}, we can find a very affine non-empty open subvariety $U$ of $X$ such that every intrinsic torus embedding  of $U$  is sch{\"o}n.
   We choose such an intrinsic embedding $U\to T'$ .
As explained in Section \ref{ss:intrinsic}, the embedding $U\to T$ gives rise to
a monomial morphism of tori $T'\to T$ such that the restriction to $U$ is an isomorphism onto $U$.
 Now the result follows from Noetherian induction on $X$.
  \end{proof}

 \begin{prop}\label{prop:schon}
 Let $T$ be a split $\KK_0$-torus and let $S$ be a semi-algebraic subset of $T$. Then there exists
 a finite partition of $S$ into semi-algebraic subsets $S'$ such that, for each $S'$, there exist:
\begin{itemize}
\item a subvariety $U$ of $T$ such that $S'$ is contained in $U(\Ka)$,
\item a sch{\"o}n closed embedding of $U$ into a split $\KK_0$-torus $T'$ with cocharacter lattice $N'$
such that
 $S'$ is of the form $U(\Ka)\cap \trop^{-1}(\Gamma)$ for some constructible subset $\Gamma$ in $N'_\QQ$,
\item  a monomial morphism $T'\to T$ such that the restriction of $T'(\Ka)\to T(\Ka)$ to $S'\subset T'(\Ka)$
 is a bijection onto $S'\subset T(\Ka)$.
\end{itemize}
 \end{prop}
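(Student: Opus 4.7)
The plan is to iteratively refine $S$ using Noetherian induction, Lemma~\ref{lemm:schon}, and a second application of Luxton--Qu's generic schönness result, so that the defining valuation inequalities ultimately become affine linear conditions on the tropicalization. First, by additivity and Noetherian induction on the Zariski closure of $S$ in $T$, I would reduce to the case where $S$ is contained in $X(\Ka)$ for an irreducible closed subvariety $X\subset T$; any contribution of $S$ supported on a proper closed subvariety drops in dimension and is handled by the inductive hypothesis. Lemma~\ref{lemm:schon} applied to $X$ then yields a finite partition of $X$ into subvarieties $U_i$, each equipped with a schön closed embedding $U_i \hookrightarrow T_i$ into a split $\KK_0$-torus and a monomial morphism $T_i \to T$ restricting to a bijection onto $U_i(\Ka)$. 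Passing to one piece at a time, the problem reduces to a semi-algebraic subset of $U_i$ viewed inside $T_i$.

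Next, I would unravel the defining data of $S\cap U_i(\Ka)$ as a Boolean combination of sets $\{x : \val f(x) \leq \val g(x)\}$ for finitely many regular functions $f,g$ on affine opens of $U_i$. Partitioning $U_i$ according to whether each such function vanishes gives a further decomposition in which the pieces where some function vanishes identically lie in a proper closed subvariety of $U_i$ and are handled by Noetherian induction, leaving an open $U_i^{\circ} \subset U_i$ on which all of the $f$'s and $g$'s are invertible. A second application of \cite[7.10]{LQ} to $U_i^{\circ}$ supplies a dense open subvariety $V \subset U_i^{\circ}$ whose intrinsic torus embedding $V \hookrightarrow T'$ is schön, with the complement dispatched again by induction. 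On $V$ each invertible defining function equals, up to a scalar in $\KK_0^\times$, a character of $T'$ pulled back to $V$, so its valuation becomes an affine linear function of $\trop(x)$; hence $S \cap V(\Ka)$ takes precisely the desired form $V(\Ka) \cap \trop^{-1}(\Gamma)$ for a constructible subset $\Gamma \subset N'_\QQ$, where $N'$ is the cocharacter lattice of $T'$.

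Finally, the required monomial morphism $T' \to T$ is obtained by composing the monomial morphism $T' \to T_i$ induced by the locally closed embedding $V \hookrightarrow U_i \hookrightarrow T_i$, as constructed in Section~\ref{ss:intrinsic}, with the monomial morphism $T_i \to T$ from Lemma~\ref{lemm:schon}; its restriction to $V \subset T'$ is an isomorphism onto $V \subset T$, and in particular induces a bijection on the $\Ka$-points of $S'$. The main delicate point will be the bookkeeping for the several nested layers of Noetherian induction --- on the Zariski closure of $S$, then on the vanishing loci of the defining functions on each schön piece, and then on the complement of the schön locus of the intrinsic embedding --- together with verifying that each composition of monomial morphisms preserves the identifications that allow the final piece $S'$ to be written as the preimage of $\Gamma$ under $\trop$ in $T'$.
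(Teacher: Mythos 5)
Your plan is correct, but it takes a genuinely different and somewhat more roundabout route than the paper's proof. The paper begins with a re-embedding trick: after reducing to the case where $S$ is cut out on a subvariety $X$ of $T$ by finitely many valuation inequalities $\val(f_i)\,\Box_i\,\val(g_i)$ with $f_i,g_i$ invertible on $X$, it re-embeds $X$ into $T\times_{\KK_0}\GG^{2r}_{m,\KK_0}$ via $x\mapsto (x,f_1(x),g_1(x),\ldots,f_r(x),g_r(x))$. In the enlarged ambient torus the valuation conditions become constraints on the last $2r$ tropical coordinates, so $S$ is immediately of the form $X(\Ka)\cap\trop^{-1}(\Gamma)$; a single application of Lemma~\ref{lemm:schon}, together with the observation that $\nu_\Q^{-1}(\Gamma)$ is again constructible, then finishes the proof. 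Your argument reaches the same goal by a different device: you note that on a very affine variety $V$ every invertible regular function is a $\KK_0^\times$-multiple of the pullback of a character of the intrinsic torus $T'$, so its valuation is an affine linear function of $\trop_{T'}$, and hence the valuation conditions again descend to a constructible $\Gamma\subset N'_\Q$. Both tricks work; yours is perhaps more intrinsic, while the paper's keeps the ambient torus explicit and requires only one appeal to Luxton--Qu.

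One concrete simplification: the initial application of Lemma~\ref{lemm:schon} in your plan is superfluous. Once you pass to the locus $X^{\circ}\subset X$ where the defining functions are invertible and apply \cite[7.10]{LQ} to $X^{\circ}$ directly, the intrinsic torus embedding $V\hookrightarrow T'$ and the monomial morphism $T'\to T$ furnished by Section~\ref{ss:intrinsic} (applied to the locally closed embedding $V\hookrightarrow T$) already give everything required, with no need to pass through the intermediate tori $T_i$. Eliminating that layer would remove one of the nested Noetherian inductions you correctly flag as the delicate point. The remaining steps --- Noetherian induction on the Zariski closure of $S$, partitioning by vanishing loci, the character computation on $V$, and the verification that the composite monomial morphism restricts to a bijection on $S'$ --- are all sound.
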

 \begin{proof}
Let $N$ be the cocharacter lattice of $T$. Partitioning $S$, we may assume that there exist a subvariety $X$ of $T$ and invertible regular functions
$$f_1,g_1,\ldots,f_r,g_r$$ on $X$ such that $S$ is given by
$$S=\{x\in X(\Ka)\,|\,\val(f_i(x))\Box_i \val(g_i(x))\mbox{ for each }i\}$$
where $\Box_i$ is either $\leq$ or $<$.  If we re-embed $X$ via the morphism
$$X\to T\times_{\KK_0}\GG^{2r}_{m,\KK_0}: x\mapsto (x,f_1(x),g_1(x),\ldots,f_r(x),g_r(x))$$
then $S$ is of the form $X(\Ka)\cap \trop^{-1}(\Gamma)$ where $\Gamma$ is a constructible subset of
$N_{\Q}\times \QQ^{2r}$. Thus we may assume that $S$ is of the form $X(K)\cap \trop^{-1}(\Gamma)$ where $\Gamma$ is a constructible subset of
$N_{\Q}$.

Let $U$ be a subvariety of $X$ and $U\to T'$ a sch{\"o}n closed embedding into a split $\KK_0$-torus $T'$. Assume that there exists a
monomial morphism of tori $T'\to T$ such that the restriction of $T'(\Ka)\to T(\Ka)$ to $U(\Ka)$ is a bijection onto $U(\Ka)$.
 Then the restriction of $T'(\Ka)\to T(\Ka)$ to $U(\Ka)\cap S$ is a bijection onto $U(\Ka)\cap S$.
  Moreover, if we denote by $\nu:N'\to N$ the integral affine linear map of cocharacter lattices associated with $T'\to T$, then
  $$ S\cap U(\Ka)=\trop^{-1}(\nu_{\Q}^{-1}(\Gamma)).$$
 The result now follows from the fact that we can partition $X$ into subvarieties $U$ of this form, by Lemma~\ref{lemm:schon}.
 \end{proof}

Now we can finish the proof of Theorem~\ref{thm:fubini}. By additivity, we may assume that there exist
 $U$, $T'$ and $\Gamma$ as in Proposition~\ref{prop:schon} for $S'=S$. Let $\nu:N'\to N$ be the integral affine linear map of cocharacter lattices associated with
  the monomial morphism $T'\to T$.
 We consider the commutative diagram
 $$\begin{CD}
 T'(\Ka)@>\trop'>> N'_\Q
\\ @VVV @VV\nu_{\Q} V
\\ T(\Ka)@>>\trop> N_\Q
 \end{CD}$$
 where we write $\trop'$ to distinguish between the two tropicalization maps.
   By the sch{\"o}n case of the Fubini theorem (Proposition~\ref{prop:schonfubini}), we know that Theorem~\ref{thm:fubini} holds
 for the embedding of $S\cap (\trop')^{-1}(\Gamma')$ into $T'(\Ka)$, for every constructible subset $\Gamma'$ of $N'_{\Q}$.
  Taking $\Gamma'=N'_{\Q}$ we see that $\trop'_{\ast}\mathbf{1}_S$ is constructible and
  $$\Vol(S)=\int_{N'_{\Q}} \trop'_{\ast}\mathbf{1}_S\,d\chi'.$$
  Taking for $\Gamma'$ a fiber of the affine linear map $\nu_{\Q}$, we also find that
 $$\trop_{\ast}\mathbf{1}_S=(\nu_{\Q})_{\ast}\trop'_{\ast}\mathbf{1}_S.$$
 Now it follows from  Proposition~\ref{prop:fubini} that
 $\trop_{\ast}\mathbf{1}_S$ is constructible and
  $$\Vol(S)=\int_{N_{\Q}} \trop_{\ast}\mathbf{1}_S\,d\chi'.$$
  This concludes the proof of Theorem~\ref{thm:fubini}. \hfill \qed

\subsection{Properties of the volumes of tropical fibers}\label{ss:tropfib}
In order to apply Theorem~\ref{thm:fubini} to concrete problems, it is useful to have some information about the shape
of the constructible decomposition of $\Q^n$ on which the tropical fibers have piecewise constant volumes.
We will prove two statements that will be important in the proofs of Conjectures A and B. We start with a basic
proposition on torus-equivariance.

\begin{prop}\label{prop:torstable}
 We keep the notations of Theorem~\ref{thm:fubini}.
 Let $\Ka^{\times}$ act on $(\Ka^{\times})^n$ with weight vector $w\in \ZZ^n$, and trivially on $Y(\Ka)$.
 Assume that $S$ is stable under this action.
 Then the function $$\varphi:\Q^n\to \gro^{\gal}(\Var_k):v\mapsto \Vol(S\cap (\trop\circ \pi)^{-1}(v))$$ is constant along the line $v+\Q w$ for every $v$ in $\Q^n$.
\end{prop}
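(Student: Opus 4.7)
The plan splits into an integer-shift step, a constructibility step, and an elementary combinatorial step. For every integer $s$, the tuple $t^{sw}:=(t^{sw_1},\ldots,t^{sw_n})$ lies in $(K_0^{\times})^n$, so multiplication by $t^{sw}$ in the weight-$w$ action defines a $K_0$-morphism $\mu_s:\GG^n_{m,K_0}\times_{K_0}Y\to\GG^n_{m,K_0}\times_{K_0}Y$ that acts trivially on $Y$. The $\Ka^{\times}$-stability of $S$ forces $\mu_s(S)=S$, and by construction $\trop\circ\pi\circ\mu_s=(\trop\circ\pi)+sw$. Hence $\mu_s$ restricts to a $K_0$-semi-algebraic bijection
$$S\cap(\trop\circ\pi)^{-1}(v)\;\risom\;S\cap(\trop\circ\pi)^{-1}(v+sw),$$
and passing to motivic volumes gives $\varphi(v+sw)=\varphi(v)$ in $\gro^{\gal}(\Var_k)$ for every $v\in\Q^n$ and every $s\in\ZZ$.

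To promote this $\ZZ$-invariance to $\Q$-invariance I would invoke constructibility. By Theorem~\ref{thm:fubini}, $\varphi$ is a constructible function on $\Q^n$. Fixing $v_0\in\Q^n$ and pulling back along the $\Q$-affine map $\Q\to\Q^n$, $s\mapsto v_0+sw$, produces a constructible function $\psi:\Q\to\gro^{\gal}(\Var_k)$. The previous step shows that $\psi(s+1)=\psi(s)$ for every $s\in\Q$, so each of the finitely many level sets of $\psi$ is a constructible subset of $\Q$ invariant under integer translation.

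It then suffices to observe that the only constructible subsets of $\Q$ that are stable under $\ZZ$-translation are $\emptyset$ and $\Q$. Indeed, a constructible subset of $\Q$ is a finite disjoint union of points and intervals; an isolated point would generate an infinite $\ZZ$-orbit of isolated points, and a bounded interval would produce infinitely many pairwise disjoint translates, each contradicting the finiteness of the decomposition. This forces $\psi$ to be constant, which is exactly the desired statement.

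The genuine obstacle is conceptual rather than computational: one is tempted to multiply directly by $t^s$ for arbitrary $s\in\Q$, but for $s\notin\ZZ$ the element $t^s$ lies in $K\setminus K_0$, the resulting bijection is not defined over $K_0$, and the equality of volumes one obtains holds only in $\gro^{\gal'}(\Var_{k'})$ after restricting to a suitable inertia subgroup $\gal'\subset\gal$. The constructibility of $\varphi$ furnished by Theorem~\ref{thm:fubini} is exactly the bridge that upgrades the $\ZZ$-invariance coming from honest $K_0$-isomorphisms to $\Q$-invariance in the full ring $\gro^{\gal}(\Var_k)$.
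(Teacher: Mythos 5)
Your proposal is correct and follows essentially the same route as the paper: establish $\ZZ w$-periodicity via the $K_0$-isomorphism given by the $t$-action, invoke constructibility of $\varphi$ from Theorem~\ref{thm:fubini}, and conclude that a constructible periodic function on a line must be constant. The paper's proof states this in two sentences and leaves the combinatorial fact about $\ZZ$-periodic constructible subsets of $\Q$ implicit; you spell it out, and you correctly flag that one cannot act by $t^s$ for non-integer $s$ without leaving $\VF_{K_0}$ -- this is precisely why constructibility is needed to pass from $\ZZ$- to $\Q$-invariance.
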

\begin{proof}
The function $\varphi$ is constructible, by Theorem~\ref{thm:fubini}, and it is periodic with period
 $w$
 because of the semi-algebraic bijection
 $$S\cap (\trop\circ \pi)^{-1}(v)\to S\cap (\trop\circ \pi)^{-1}(v+w)$$
 defined by the action of $t\in K_0^{\times}$,  for every $v$ in $\Q^{n}$.
  Thus the restriction of $\varphi$ to every line of the form $v+\Q w$ in $\Q^{n}$ is both constructible and periodic, which is only possible
 if it is constant.
\end{proof}

To formulate the second result, we need to make some preparations. Let $n$ be a positive integer and let $w$ be an element of $\Q^n$.
We denote by
$\GG^w_{m,k}$
 the $k$-torus $\GG^n_{m,k}$
 endowed with the left $\gal$-action with weight vector $w$ (see Section \ref{ss:notation}).
 We set $\GG^w_{m,K}=\GG^w_{m,k}\times_k K$, endowed with the diagonal $\gal$-action.
  We define the tropicalization map
  $$\trop:\GG^w_{m,K}(\Ka)\to \Q^n$$ by ignoring the $\gal$-action on $\GG^w_{m,K}$.
 Let $Y$ be a $k$-variety with trivial $\gal$-action.
 We say that a semi-algebraic subset $S$ of $\GG^w_{m,K}\times_k Y$ is {\em defined over $k$} if we can write it as a finite Boolean combination of
 sets of the form   $$\{x\in (\GG^w_{m,K}\times_k U)(\Ka)\,|\,\val(f(x))\leq \val(g(x))\}$$
 where $U$ is an affine open subvariety of $Y$ and $f$ and $g$ are regular functions on $\GG^w_{m,k}\times_k U$
 that are invariant under the $\gal$-action.
  Then $S$ is also defined over $K_0$, that is, it is a semi-algebraic subset of the $K_0$-scheme of finite type
 $(\GG^w_{m,K}/\gal)\times_k Y$. The torus $\GG^n_{m,K}$ acts on $\GG^w_{m,K}$ by multiplication from the left, and the multiplication morphism
 $$\GG^n_{m,K}\times_{K} \GG^w_{m,K}\to \GG^w_{m,K}$$ is $\gal$-equivariant and descends to a morphism
 $$\GG^n_{m,K_0}\times_{K_0} (\GG^w_{m,K}/\gal)\to \GG^w_{m,K}/\gal$$
 that makes $\GG^w_{m,K}/\gal$ into a $\GG^n_{m,K_0}$-torsor.
 We can trivialize this torsor by means of the $\gal$-equivariant isomorphism
 $$\GG^n_{m,K}\to \GG^w_{m,K}:(x_1,\ldots,x_n)\mapsto (t^{w_1}x_1,\ldots,t^{w_n}x_n)$$
that maps the identity of $\GG^n_{m,K}$ to the $\gal$-fixed point $(t^{w_1},\ldots,t^{w_n})$ of $\GG^w_{m,K}$.

\begin{rem}
The expression ``defined over $k$'' is a slight abuse of terminology; more precisely, what we are considering here are twisted forms over $K_0$ of semi-algebraic sets in $\GG^n_{m,K}\times_k Y$ defined over $k$.
\end{rem}

\begin{prop}\label{prop:fan}
Let $w$ be an element of $\Q^n$, for some $n>0$. Let $Y$ be a $k$-variety with trivial $\gal$-action and let $S$ be a semi-algebraic subset of
$\GG^w_{m,K}\times_k Y$ that is defined over $k$.
 Denote by $$\pi:\GG^w_{m,K}\times_k Y\to \GG^w_{m,K}$$ the projection morphism.
Then there exists a complete fan  in $\Q^n$
such that the function
$$\varphi:\Q^n\to \gro^{\gal}(\Var_k):v\mapsto \Vol(S\cap (\trop\circ \pi)^{-1}(v))$$ is constant on
the relative interior of each cone.
\end{prop}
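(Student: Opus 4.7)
By Theorem~\ref{thm:fubini}, the function $\varphi$ is constructible on $\Q^n$. A constructible subset of $\Q^n$ is a finite union of relative interiors of cones in a complete rational polyhedral fan if and only if it is stable under positive rational dilation. So my plan is to prove $\varphi(\lambda v) = \varphi(v)$ for every $\lambda \in \Q_{>0}$ and every $v \in \Q^n$; the required complete fan will then be obtained as the coarsest common refinement of the level sets of $\varphi$, completed by adjoining all faces.

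By additivity of the motivic volume and Noetherian induction on $Y$, I would reduce to the case in which $Y$ is affine and $S$ is defined by a single inequality $\val(f) \leq \val(g)$ with $f, g \in \mathcal{O}(\GG^w_{m,k} \times_k Y)^{\gal}$. Such a $\gal$-invariant function is a $k$-polynomial of the form $\sum_{m \in M_w} c_m(y)\chi^m$ with $M_w = \{m \in \ZZ^n : \langle w, m\rangle \in \ZZ\}$ and $c_m \in \mathcal{O}(Y)$, and the key structural feature is that these coefficients carry no explicit $t$-dependence.

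For the scaling invariance, I would use the base change $K_0 \hookrightarrow K_0' = k(\!(t')\!)$ given by $t = (t')^N$ for a positive integer $N$. Under this base change one has $\val_{t'} = N \val_t$, so the underlying set $S \cap \trop_t^{-1}(v)$ is exactly the same as $S \cap \trop_{t'}^{-1}(Nv)$. Because $f$ and $g$ involve no $t$, the defining formulas for $S$ are literally unchanged by the base change. Combining this with a compatible $\gal$-equivariant renormalization on the torus coordinates---one that exploits the fact that the twisted tori $\GG^w_{m,K}/K_0$ and $\GG^{Nw}_{m,K}/K_0'$ differ only through the explicit rescaling $t \mapsto t^N$---I would produce a $\gal$-equivariant semi-algebraic identification of these fibers compatible with the motivic volume, yielding $\varphi(v) = \varphi(Nv)$. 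The case $\lambda = 1/N$ is symmetric (run the construction in reverse), and together these give invariance under all of $\Q_{>0}$. Proposition~\ref{prop:torstable}, applied to the $\Ka^{\times}$-actions by cocharacters $u \in \ZZ^n$ that lie in the lineality of each candidate cone, would then promote this pointwise scaling invariance to constancy on relative interiors of cones.

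The main difficulty I anticipate is the $\gal$-descent step. Direct base change, via the functoriality of $\Vol$ stated after Theorem~\ref{thm:HK}, yields only the equality $\mathrm{Res}^{\gal}_{\gal'}(\varphi(v)) = \mathrm{Res}^{\gal}_{\gal'}(\varphi(Nv))$ in $\gro^{\gal'}(\Var_k)$, where $\gal' = \Gal(\KK/\KK_0')$ has index $N$ in $\gal$; this restriction map is typically not injective. To upgrade to an equality in $\gro^{\gal}(\Var_k)$, one must verify that the residual $\gal/\gal' = \mu_N$-action on the two sides agrees. This in turn should follow from the explicit transformation relating the weight-$w$ twist over $K_0$ to the weight-$Nw$ twist over $K_0'$ (which matches $\mu_N \subset \gal$ on both sides) and reduces, via Theorem~\ref{thm:snc}, to a concrete $\mu_N$-equivariance check on the initial degenerations of a suitable strict normal crossings model; this is the technical heart of the argument.
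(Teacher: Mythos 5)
Your proposal is the ``substitute $t$ by $t^q$'' reparameterization argument, which the authors explicitly acknowledge in the remark immediately following the proof of Proposition~\ref{prop:fan}: they note that scaling gives the most intuitive explanation for the fan structure, but that keeping track of the $\gal$-action makes a proof along these lines ``somewhat tedious,'' and they opt for a different route. You correctly locate the obstruction --- the $\gal$-descent step --- and leave it unfilled, so as written there is a genuine gap at exactly the point the authors flag. Spelling it out: writing $\Psi$ for the automorphism of $K$ sending $t\mapsto t^{1/N}$, you get $\Vol_{K_0}(S\cap\trop^{-1}(v))=\Vol_{K_0'}(S\cap\trop^{-1}(v/N))$ in $\gro^{\gal}(\Var_k)$, but only with the \emph{canonical} identification $\Gal(\KK/\KK_0')\cong\gal$ via the uniformizer $t^{1/N}$; meanwhile the base-change compatibility after Theorem~\ref{thm:HK} relates $\Vol_{K_0'}$ to $\Vol_{K_0}$ only through $\mathrm{Res}^{\gal}_{\gal'}$ where $\gal'\subset\gal$ has index $N$, and the two ways of identifying the Galois group with $\gal$ differ by multiplication by $N$. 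Bridging these in $\gro^{\gal}(\Var_k)$, rather than after restriction, is precisely the $\mu_N$-comparison you name. Your proposed route to close it via Theorem~\ref{thm:snc} is also not quite on target: the tropical fibers $S\cap\trop^{-1}(v)$ are not a priori of the form $\spe^{-1}_{\cX}(C)$ for an snc model $\cX$ over $R_0$, and the mechanism that actually washes out the $\gal$-action discrepancy between $v$ and $Nv$ is the torsor/linear-action-triviality argument already visible in Lemma~\ref{lemm:init} (initial degenerations depend only on the cone), not the snc comparison formula.

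The paper's own proof sidesteps the reparameterization entirely. In the sch\"on case it uses that a $\gal$-stable sch\"on subvariety $X\subset\GG^n_{m,k}$ defined over $k$ has a tropical fan $\Sigma_1'$ in $\R^n$ that is already a genuine rational fan (cones through the origin); translating by $-w$ gives a tropical fan for $X'=t^{-w}X$ over $K_0$, and Proposition~\ref{prop:schonfubini} delivers constancy on relatively open cones directly. The general case is handled by a $\gal$-equivariant refinement of the Luxton-Qu decomposition: pass to the quotient torus $\widetilde T=\GG^w_{m,k}/\gal$, apply Luxton-Qu to the image $\widetilde X$, and pull the sch\"on intrinsic embedding back through the finite \'etale cover $T\to\widetilde T$; finiteness and flatness show the pullback embedding is sch\"on and the whole construction is $\gal$-equivariant. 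This produces the fan structurally in one pass, rather than establishing pointwise dilation invariance and reconstructing the fan afterward. Two smaller remarks on your draft: the reduction to a single inequality is not available by additivity alone (you can only reduce to finite conjunctions, as in Proposition~\ref{prop:schon}), though this does not hurt your argument; and the closing appeal to Proposition~\ref{prop:torstable} is both misplaced (that proposition requires $S$ itself to be stable under a $\Ka^\times$-action, which is not part of your hypotheses) and unnecessary, since constructibility of $\varphi$ together with dilation invariance already forces its level sets to be finite unions of relatively open rational cones that can be assembled into a complete fan.
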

\begin{proof}
We can make a similar reduction as at the beginning of the proof of Theorem~\ref{thm:fubini}: by additivity we may assume that $Y$ is
a closed subvariety of a $k$-torus with trivial $\gal$-action, and by absorbing this torus into $\GG^w_{m,k}$ we can reduce to the case where
$Y=\Spec k$. We first deal with the sch{\"o}n case.
 An integral closed subvariety $X$ of $\GG^n_{m,k}$ is called sch{\"o}n if $X\times_k K$ is sch{\"o}n in $\GG^n_{m,K}$.
  Let $X$ be a sch\"on integral closed subvariety of $\GG^n_{m,k}$ that is stable under the $\gal$-action.
 Let $\Gamma$ be a constructible subset of $\Q^n$ that is stable under
scalar multiplication with elements in $\Q_{>0}$, and let $S=X(\Ka)\cap \trop^{-1}(\Gamma)$.
 Let $\Sigma'_1$ be a tropical fan for $X$ in $\R^n$ in the sense of \cite{tevelev}.
  We may assume that $\Gamma$ is a union of relatively open cones in $\Sigma'_1$.
 Set $\Sigma_1=-w+\Sigma'_1$ and let
 $\Sigma$ be the fan over $\Sigma_1\times \{1\}$ in $\R^n\times \R_{\geq 0}$. This is a tropical fan
 for the sch{\"o}n subvariety $X'=t^{-w}X$ of $\GG^n_{m,K}$, and $X'$ is defined over $K_0$.
 Multiplication with $t^{-w}$ yields an isomorphism between $X$ and
 $X'$ defined over $K_0$ that commutes with the tropicalization maps up to translation by $w$. Thus we can deduce Proposition~\ref{prop:fan} for $S$
 by applying Proposition~\ref{prop:schonfubini} to the semi-algebraic subset $t^{-w}S$ of $X'$.

    To prove the general case, we can proceed in a similar way as in step 2 of the proof of Theorem~\ref{thm:fubini}.
 It is sufficient to show that, for every $\gal$-stable integral subvariety $X$ of $\GG^w_{m,k}$, we can find
 a $\gal$-stable dense subvariety $U$ of $X$, a $\gal$-equivariant sch{\"o}n closed embedding $U\to \GG^{w'}_{m,k}$ for some $w'\in \Q^{n'}$, and a
a $\gal$-equivariant monomial morphism of tori $\GG^{w'}_{m,k}\to \GG^w_{m,k}$
that induces an isomorphism $U\to U$. Then the rest of the proof of Theorem~\ref{thm:fubini} immediately carries over to our setting.

We set $T=\GG^w_{m,k}$. The quotient of $T$ by the action of $\gal$ is a split $k$-torus $\widetilde{T}$, and the quotient map $T\to \widetilde{T}$ is a Kummer finite \'etale cover
 of degree $d$, the smallest positive integer such that $d\cdot w$ lies in $\Z^n$. Let $\widetilde{X}$ be the image of $X$ in $\widetilde{T}$.
  By \cite[1.4]{LQ}, we can find a dense very affine open subvariety $\widetilde{U}$ in $\widetilde{X}$ such that every intrinsic torus embedding of $\widetilde{U}$  is sch{\"o}n.
   We choose such an intrinsic embedding $\widetilde{U}\to \widetilde{T}'$.  By the discussion in Section \ref{ss:intrinsic}, the embedding of $\widetilde{U}$ into $\widetilde{T}'$ induces a monomial morphism of tori $f:\widetilde{T}'\to \widetilde{T}$  that restricts to an isomorphism from $\widetilde{U}$ onto $\widetilde{U}$.
  Let $U=\widetilde{U}\times_{\widetilde{T}}T$ be the inverse image of $\widetilde{U}$ in $T$; it is a $\gal$-stable dense open subvariety of $X$.
  Set $T'= \widetilde{T}'\times_{\widetilde{T}} T$; then we can endow $T'$ with the structure of a split $k$-torus such that $T'\to \widetilde{T}'$ is a morphism of tori and $T'\to T$ is a monomial morphism.
   The torus $T'$ inherits a good $\gal$-action from $T=\GG^w_{m,k}$ such that $\widetilde{T}'=T'/\gal$, and $T'$ is of the form $\GG^{w'}_{m,k}$ for some finite tuple $w'$ of rational numbers.

  The closed embedding $\widetilde{U}\to \widetilde{T}'$ induces a $\gal$-equivariant closed embedding
  $g:U\to T'$ by base change.
   The projection morphism $T'\to \widetilde{T}'$ induces a finite \'etale morphism of split $R$-tori $h:T'\times_k R\to \widetilde{T}'\times_k R$ by base change.
  For every point $a$ of $T(K)$, the closure of $a^{-1}(U\times_k K)$ in $T'\times_k R$ is the inverse image under $h$ of the closure of $(h(a))^{-1}(\widetilde{U}\times_k K)$ in $\widetilde{T}'\times_k R$
 because $h$ is finite and flat. Restricting $h$ to the special fibers, we see that the initial degeneration of $U\times_k K$ with respect to $a$ is a finite \'etale cover of the initial degeneration of $\widetilde{U}\times_k K$ with respect to $h(a)$. The latter is smooth because $\widetilde{U}$ is sch\"on, and it follows that all the initial degenerations of the embedding $U\times_k K\to T'\times_k K$ are smooth, as well. In other words, the closed embedding $U\to T'$ is sch\"on.
    This concludes the proof.
  \end{proof}

\begin{rem}
A more intuitive explanation for the fan structure in Proposition~\ref{prop:fan} is that we can reparameterize points in $S$ by substituting $t$ by $t^q$
 for any positive rational number $q$. Since we need to keep track of the $\gal$-action,
 writing down a proof along these lines is somewhat tedious,
 which is why we have opted for the cleaner argument in the proof of Proposition~\ref{prop:fan}.
\end{rem}

\section{Proofs of Conjectures A and B}\label{sec:appli}
\subsection{The conjecture of Davison and Meinhardt}
As a first application, we prove Conjecture A from the introduction. Using the comparison result in Corollary~\ref{cor:compar},
 the following theorem is a strengthening of Conjecture A (the strengthening being that we do not invert $\LL$).

\begin{thm}\label{thm:DM}
Let $Y$ be a smooth $k$-variety endowed with the trivial $\GG_{m,k}$-action, and let $\GG_{m,k}$ act on $\AA^n_k$ with weights $w_1,\ldots,w_n> 0$.
 Set $U= \AA^n_k \times_k Y$ and let $$f:U\to \AA^1_k=\Spec k[t]$$ be a morphism of $k$-varieties that is $\GG_{m,k}$-equivariant,
 where $\GG_{m,k}$ acts on $\AA^1_k$ with weight $d>0$.
    If we set $\cV=U\times_{k[t]}R_0$, then $$\Vol(\cV(\Ra))=[f^{-1}(1)]$$ in $\gro^{\gal}(\Var_k)$, where the $\gal$-action on $f^{-1}(1)$ factors through $\mu_d(k)$ and is given by
      $$\mu_d(k)\times f^{-1}(1)\to f^{-1}(1):(\zeta,(x_1,\ldots,x_n,y))\mapsto (\zeta^{w_1}x_1,\ldots,\zeta^{w_n}x_n,y).$$
\end{thm}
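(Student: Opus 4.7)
$\textbf{Plan.}$

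The strategy is to apply the tropical motivic Fubini theorem (Theorem~\ref{thm:fubini}) to $\cV(\Ra)$ via the projection to the $\GG^n_{m,\KK_0}$-factor, then use $\GG_{m,k}$-equivariance of $f$ to concentrate the contribution at a single point of $\Q^n$. First reduce to the open stratum where all coordinates $x_i$ are nonzero: decompose $\cV(\Ra)$ by the subset $Z\subseteq\{1,\ldots,n\}$ of vanishing coordinates. The $Z=\{1,\ldots,n\}$ stratum is empty, since $\GG_{m,k}$-equivariance with $d>0$ forces $f(0,y)=0\neq t$; each proper stratum falls under the same theorem applied to the restricted $\GG_{m,k}$-equivariant morphism $f|_{x_Z=0}:\AA^{Z^c}_k\times_k Y\to\AA^1_k$. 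Induction on $n$ reduces the problem to computing $\Vol(S^{\ast})$, where $S^{\ast}=\cV(\Ra)\cap(\GG_m^n\times_k Y)(\Ka)$.

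The key geometric input is the change of variables $x_i=s^{w_i}x_i'$ with $s=t^{1/d}\in R$ fixed. It defines
\[
\gamma:X_0(\Ra)\longrightarrow\cV(\Ra),\qquad(x',y)\longmapsto(s^{w_1}x'_1,\ldots,s^{w_n}x'_n,y),
\]
where $X_0=f^{-1}(1)$, whose image is precisely $S^{\geq}=\{(x,y)\in\cV(\Ra):\val(x_i)\geq w_i/d\text{ for all }i\}$. Because $\sigma_\zeta(s)=\zeta s$ for $\zeta\in\mu_d$, one verifies directly that $\gamma$ is $\Gal(\Ka/\KK_0)$-equivariant once $X_0(\Ra)$ is equipped with the twisted $\gal$-action of the theorem statement (in which $\zeta$ acts by weight $w_i$ on $x_i'$); by Proposition~\ref{prop:galequiv}, $\gamma$ is a semi-algebraic bijection over $\KK_0$. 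Theorem~\ref{thm:HK}\,(1), applied to the smooth locus of $X_0$ and combined with Noetherian induction over the singular locus, then yields $\Vol(S^{\geq})=[X_0]$ in $\gro^{\gal}(\Var_k)$ with precisely the $\mu_d$-action prescribed by the theorem.

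It remains to prove that $\Vol(\cV(\Ra)\setminus S^{\geq})=0$. Extending $\gamma$ to a bijection from $\{(x',y)\in X_0(\Ka):\val(x_i')\geq-w_i/d\ \forall i\}$ onto $\cV(\Ra)$, this set is identified with the subset of $X_0^{\ast}(\Ka)=(X_0\cap(\GG_m^n\times_k Y))(\Ka)$ cut out by $\val(x_i')\geq -w_i/d$ for all $i$ and $\val(x_i')<0$ for at least one $i$. Because $f$ is $\mu_d$-invariant on $\GG_m^n$, the variety $X_0^{\ast}$ is a $\mu_d$-stable closed subvariety of $\GG_m^n\times_k Y$ with the $\mu_d$-action of integer weights $w$, and Proposition~\ref{prop:fan} produces a complete fan $\Sigma$ in $\Q^n$ on whose relative interiors of cones the fiber volume $v\mapsto\Vol(X_0^{\ast}(\Ka)\cap\trop^{-1}(v))$ is constant. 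Refining $\Sigma$ to be compatible with the hyperplanes $v_i=0$ and $v_i=-w_i/d$ and stratifying by which coordinates satisfy $v_i\in[-w_i/d,0)$, each stratum of the region becomes a disjoint product with at least one bounded half-open factor $[-w_i/d,0)$; iterated application of Proposition~\ref{prop:fubini} then yields the vanishing, since $\chi'([-w_i/d,0))=0$.

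The main obstacle lies in this final combinatorial vanishing: one must verify that the fan $\Sigma$ can be refined so that on every cone meeting the relevant region, the piecewise constant fiber volume factors cleanly through the half-open direction, allowing the Fubini integral to vanish coordinatewise. This is the incarnation, in the present setting, of the $\GG_m$-argument flagged in the introduction; the positivity of the weights $w_i>0$ is essential, as it is what forces the contributing intervals to be bounded and half-open with $\chi'=0$. An analogous argument appears in the proof of the Kontsevich--Soibelman integral identity (Theorem~\ref{thm:KS}).
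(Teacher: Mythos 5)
Your proposal follows the same route as the paper: the monomial substitution $x_i = t^{w_i/d}x_i'$ relating $\cV(\Ra)$ to a twist of $f^{-1}(1)\times_k R$, reduction by additivity to the open torus stratum, an application of Theorem~\ref{thm:fubini}, Proposition~\ref{prop:fan} to get a complete fan on which the fiber--volume function is constant, and a concluding vanishing of bounded Euler characteristic. (You stratify by vanishing coordinates before the change of variables, the paper does it after; this is immaterial. A small simplification you missed: $f^{-1}(1)$ is automatically smooth by the weighted Euler relation $\sum w_i x_i \partial f/\partial x_i = d\cdot f$, which cannot vanish on $f^{-1}(1)$ since $Y$ is smooth, so no Noetherian induction over a singular locus is needed to get $\Vol(S^{\geq})=[f^{-1}(1)]$.)

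The gap is exactly where you flag an ``obstacle,'' and it is a real one. You cannot refine a fan in $\Q^n$ to be compatible with the affine hyperplanes $v_i=-w_i/d$: fan cones have apex at the origin and these hyperplanes miss it. Moreover, even after stratifying $\Gamma$ into box-sectors $\prod_{i\in I}[-w_i/d,0)\times\prod_{i\notin I}[0,\infty)$, the intersection of such a piece with a cone $\sigma$ of the fan from Proposition~\ref{prop:fan} is \emph{not} a coordinate-wise product, so iterated application of Proposition~\ref{prop:fubini} along coordinate axes does not factor out $\chi'([-w_i/d,0))$: the pushforward along a coordinate integrates a function that is piecewise constant on intervals determined by $\sigma$, not constant on the interval. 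The correct concluding step is the paper's radial argument, which avoids any further refinement: for each relatively open cone $\sigma$ of the fan, $\sigma\cap\Gamma$ is closed under scaling by $(0,1]$ and does not contain the origin (since $\Gamma$ does not), so $(\sigma\cap\Gamma)_{\R}\cap[-r,r]^n$ is homeomorphic to a product with a half-open interval factor in the radial direction, giving $\chi'(\sigma\cap\Gamma)=0$ and hence $\Vol(S^o)=0$. The direction of the interval you should peel off is radial, not coordinate-wise; the positivity of the weights enters in that it forces the origin to lie outside $\Gamma$ while all of $\Gamma$ scales radially into itself.
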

\begin{proof}
Set $\cW=f^{-1}(1)\times_k R$ and endow it with the diagonal $\gal$-action. Then $\cW$ is smooth over $R$ and $\Vol(\cW(\Ra))=[f^{-1}(1)]$ in $\gro^{\gal}(\Var_k)$
by the definition of the motivic volume.
 The map
$$\cV(\Ka)\to \cW(\Ka):(x_1,\ldots,x_n,y)\mapsto (t^{-w_1/d}x_1,\ldots,t^{-w_n/d}x_n,y)$$
is a semi-algebraic bijection that is defined over $\KK_0$, and identifies $\cV(\Ra)$ with a subset of $\cW(\Ka)$ that contains $\cW(\Ra)$.
It suffices to show that the motivic volume of the complement of $\cW(\Ra)$ in the image of $\cV(\Ra)$ vanishes.
This complement is given by
$$S=\{(x_1,\ldots,x_n,y)\in \cW(\Ka)\,|\,\val(x_i)\geq -w_i/d\mbox{ for all }i,\,\val(x_i)<0\mbox{ for some }i,\,y\in Y(\Ra)\}.$$
 By induction on $n$, it is enough to prove that
 $$S^o=S\cap ((\Ka^{\times})^n\times Y(\Ra))$$
has motivic volume $0$.

We can view $\cW(\Ka)\cap ((\Ka^{\times})^n\times Y(\Ra))$
as a semi-algebraic subset of $\GG^w_{m,K}\times_k Y$ that is
 defined over $k$, in the sense of Section \ref{ss:tropfib}.
 We denote by $\pi$ the projection map
 $$\pi: (\Ka^{\times})^n\times Y(\Ra)\to (\Ka^{\times})^n$$
and we consider the function
$$\varphi:\Q^n\to \gro^{\gal}(\Var_k):v\mapsto \Vol(\cW(\Ka)\cap (\trop \circ \pi)^{-1}(v)).$$
 Set
 $$\Gamma=\{v\in \Q^n\,|\,v_i\geq -w_i/d\mbox{ for all }i,\,v_i<0 \mbox{ for some }i\}.$$
 By Theorem~\ref{thm:fubini}, the function $\varphi$ is constructible, and we can compute the motivic volume of $S^o$ as
 $$\Vol(S^o)=\int_{\Gamma}\varphi \,d\chi'.$$
  By Proposition~\ref{prop:fan}, we can find a complete fan in $\Q^n$ such that $\varphi$ is constant on every relatively open cone $\sigma$ in this fan.
 If we denote by $(\sigma\cap \Gamma)_{\R}$ the subset of $\R^n$ associated with $\sigma\cap \Gamma$,
  then the intersection of $(\sigma\cap \Gamma)_{\R}$ with any box $[-r,r]^n$, $r\in \R_{>0}$, is homeomorphic to $[0,1)\times \partial$
 where $\partial$ denotes the boundary of $(\sigma\cap \Gamma)_{\R}\cap [-r,r]^n$. Since the compactly supported Euler characteristic of $[0,1)$ vanishes, it follows that
 $\chi'(\sigma\cap \Gamma)=0$ for all $\sigma$, so that $\Vol(S^o)=0$.
\end{proof}
\if false
\begin{rem}
If we denote by $\rho:(K^{\times})^n\times Y(K)\to (K^{\times})^n$ the projection map, then
our proof also shows
that $$\Vol(\cV(R)\cap ((K^{\times})^n\times Y(K)))=\Vol(\cV(K)\cap (\trop \circ \rho)^{-1}(w/d)).$$
Thus the entire contribution from $\mathbb{G}^n_{m,k}\times Y$ to the motivic nearby fiber of $f$ comes
from the $K$-points that tropicalize to $w/d$. Note that the intersection of $f^{-1}(1)$ with $\GG^n_{m,k}$ is precisely
the initial degeneration of $f$ at $w/d$.
\end{rem}
\fi

\subsection{The integral identity of Kontsevich and Soibelman}
As a second application, we give a short proof of the integral identity conjecture of Kontsevich and Soibelman (Conjecture B in the introduction).
L\^e Quy Thuong proved this conjecture in \cite{thuong}, also using Hrushovski-Kazhdan motivic integration.
  Our comparison statement in Corollary~\ref{cor:compar} and our Fubini theorem for the tropicalization map allow us to substantially
  simplify the proof and avoid the inversion of $\LL$. We also generalize the statement by allowing
  arbitrary positive weights on $\AA^{d_1}_k$ and arbitrary negative weights on $\AA^{d_2}_k$, and replacing the factor $\AA^{d_3}_k$ by any $k$-variety with trivial
  $\GG_{m,k}$-action.
      By Corollary~\ref{cor:compar}, the following theorem is a generalization of Conjecture B.

\begin{thm}\label{thm:KS}
 Let $Z$ be a $k$-variety with trivial $\GG_{m,k}$-action and let $p$ be a closed point in $Z$.
Let $d_1$ and $d_2$ be nonnegative integers and let $\GG_{m,k}$ act diagonally on $$U=\AA^{d_1}_k\times_k \AA^{d_2}_k\times_k Z$$
with positive weights on the first factor, with negative weights on the second factor, and trivially on $Z$.
 Let $$f:U\to \AA^1_k=\Spec k[t]$$ be a dominant morphism
that is $\GG_{m,k}$-equivariant, where $\GG_{m,k}$ acts trivially on the target $\AA^1_k$, and such that $f(0,0,p)=0$.

   We view $\AA^{d_1}_k$ and $Z$ as closed subvarieties of $U$ via the embeddings $x\mapsto (x,0,p)$ and $z\mapsto (0,0,z)$, respectively.
 Then $f$ vanishes on $\AA^{d_1}_k$.
 We set $\cV=U\times_{k[t]}R_0$  and $\cW=Z\times_{k[t]}R_0$.
 Then
$$\Vol(\spe^{-1}_{\cV}(\AA^{d_1}_k))=\LL^{d_1}\Vol(\spe^{-1}_{\cW}(p))$$
 in $\gro^{\gal}(\Var_k)$.
\end{thm}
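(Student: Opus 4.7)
The plan is to describe $S = \spe^{-1}_{\cV}(\AA^{d_1}_k)$ explicitly, identify the ``main'' contribution coming from the stratum $\{y=0\}$ as equal to the right-hand side, and show that every other stratum has vanishing motivic volume by combining the tropical Fubini theorem with the scaling action of $\Ka^\times$ of weight $(w,-w')$, where $w=(w_i)$ are the positive weights on $\AA^{d_1}_k$ and $w'=(w'_j)$ are the absolute values of the weights on $\AA^{d_2}_k$. Unravelling definitions, $S$ consists of $(x,y,z)\in U(\Ra)$ with $\val(x_i)\geq 0$, $\val(y_j)>0$, $z\in\spe^{-1}_Z(p)$, and $f(x,y,z)=t$. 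Writing $f=\sum_{a,b} x^a y^b g_{a,b}(z)$, $\GG_{m,k}$-equivariance forces $\langle a,w\rangle=\langle b,w'\rangle$; since all entries of $w,w'$ are strictly positive, every monomial other than $g_{0,0}(z)$ involves both some $x_i$ and some $y_j$. This yields $f|_{\AA^{d_1}_k}\equiv 0$ and shows that $f|_Z=g_{0,0}$, so $\cW$ is cut out by $g_{0,0}(z)=t$.

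Stratify $S=\bigsqcup_{A\subseteq[d_1],\,B\subseteq[d_2]} S_{A,B}$, with $S_{A,B}$ the locus where $x_i=0\iff i\in A$ and $y_j=0\iff j\in B$. When $B=[d_2]$ the equation $f=t$ reduces to $g_{0,0}(z)=t$ (independent of $x$), so $S_{A,[d_2]}$ is the product $(\Ra\setminus\{0\})^{d_1-|A|}\times\{0\}\times\spe^{-1}_{\cW}(p)$, of motivic volume $(\LL-1)^{d_1-|A|}\Vol(\spe^{-1}_{\cW}(p))$. Summing the binomial expansion over $A\subseteq[d_1]$ gives $\LL^{d_1}\Vol(\spe^{-1}_{\cW}(p))$, which is the right-hand side. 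It remains to show $\Vol(S_{A,B})=0$ for every $B\subsetneq[d_2]$.

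For fixed $(A,B)$ with $B\subsetneq[d_2]$, enlarge $S_{A,B}$ to the semi-algebraic set $\widetilde{S}_{A,B}\subseteq \GG^{|A^c|}_{m,\KK_0}\times_{\KK_0}\GG^{|B^c|}_{m,\KK_0}\times_{\KK_0} Z$ obtained by dropping the valuation constraints on $x_{A^c}$ and $y_{B^c}$, retaining only $z\in\spe^{-1}_Z(p)$ and $f|_{x_A=0,\,y_B=0}(x_{A^c},y_{B^c},z)=t$. The restricted $f$ is still $\GG_{m,k}$-invariant with weights $(w_{A^c},-w'_{B^c})$, so $\widetilde{S}_{A,B}$ is stable under the corresponding $\Ka^\times$-scaling. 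By Theorem~\ref{thm:fubini} and Proposition~\ref{prop:torstable}, the constructible function
\[
\varphi:\QQ^{|A^c|}\times\QQ^{|B^c|}\to \gro^{\gal}(\Var_k),\qquad (v,u)\mapsto \Vol\bigl(\widetilde{S}_{A,B}\cap(\trop\circ\pi)^{-1}(v,u)\bigr)
\]
is constant along the lines of direction $(w_{A^c},-w'_{B^c})$. Since $S_{A,B}=\widetilde{S}_{A,B}\cap(\trop\circ\pi)^{-1}(\Gamma_{A,B})$ with $\Gamma_{A,B}=\QQ^{|A^c|}_{\geq 0}\times\QQ^{|B^c|}_{>0}$, Proposition~\ref{prop:fubini} applied to the quotient map $\QQ^{|A^c|+|B^c|}\to\QQ^{|A^c|+|B^c|}/\QQ(w_{A^c},-w'_{B^c})$ reduces $\Vol(S_{A,B})$ to an integral of $\varphi(q)\cdot\chi'(\ell_q\cap\Gamma_{A,B})$, where $\ell_q$ is the one-dimensional line fiber. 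Because $B\subsetneq[d_2]$, the direction $(w_{A^c},-w'_{B^c})$ has at least one strictly negative entry (in $B^c$), so $\ell_q\cap\Gamma_{A,B}$ is a bounded half-open interval when $A^c\neq\emptyset$ (closed lower endpoint from the $\geq 0$ constraints on $A^c$, open upper endpoint from the $>0$ constraints on $B^c$) and an open half-line when $A^c=\emptyset$. In both cases $\chi'=0$, so $\Vol(S_{A,B})=0$ and the proof concludes.

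The main obstacle will be the bookkeeping: checking that each stratum embeds as a semi-algebraic subset of $\GG_m^n\times Y$ in the form required by Theorem~\ref{thm:fubini} and Proposition~\ref{prop:torstable}, identifying the correct polyhedral region and scaling direction in every stratum, and confirming that the mixed signs in $(w,-w')$ together with the mixed strict and nonstrict inequalities defining $S$ force $\chi'(\ell_q\cap\Gamma_{A,B})=0$ in every case except the main $B=[d_2]$ one.
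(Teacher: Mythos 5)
Your proof is correct and follows essentially the same strategy as the paper's: isolate the $y=0$ locus (which gives $\LL^{d_1}\Vol(\spe^{-1}_{\cW}(p))$), stratify the rest by torus orbits, and on each orbit use Theorem~\ref{thm:fubini} together with Proposition~\ref{prop:torstable} and the $\GG_m$-scaling to show that the relevant polyhedral fibers (half-open intervals or open half-lines, since $B\subsetneq[d_2]$ contributes a strictly negative component to the scaling direction) have vanishing bounded Euler characteristic. The only differences are cosmetic: you further subdivide the $y=0$ stratum by $A\subseteq[d_1]$ and apply the binomial theorem, where the paper directly identifies $S_1 = \spe^{-1}_{\cW}(p)\times\Ra^{d_1}$; and you write out the restriction to $\{x_A=0,\,y_B=0\}$ for each orbit explicitly, where the paper compresses this into an ``induction on $d_1+d_2$.''
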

\begin{proof}
The vanishing of $f$ on $\AA^{d_1}_k$ follows from $\GG_{m,k}$-equivariance, together with the fact that $f(0,0,p)=0$.
 We partition $S=\Vol(\spe^{-1}_{\cV}(\AA^{d_1}_k))$ into the semi-algebraic sets
\begin{eqnarray*}
S_0&=&\{v=(x,y,z)\in \cV(\Ra)\,|\,\spe_{\cV}(v)\in \AA^{d_1}_k\mbox{ and }y\neq 0\},
\\ S_1&=&\{v=(x,y,z)\in \cV(\Ra)\,|\,\spe_{\cV}(v)\in \AA^{d_1}_k\mbox{ and }y= 0\}.
\end{eqnarray*}
We first compute the volume of $S_1$. By the $\GG_{m,k}$-equivariance of $f$, the value of $f(x,0,z)$ only depends on $z$, and we can write
 $S_1=\spe^{-1}_{\cW}(p)\times \Ra^{d_1}$.
 The second factor has motivic volume $\LL^{d_1}$, so that
 $$\Vol(S_1)= \LL^{d_1}\Vol(\spe^{-1}_{\cW}(p)).$$

Therefore, it suffices to show that the motivic volume of $S_0$ vanishes. By additivity, it is enough to prove this after replacing $S_0$
by $S'_0=S_0\cap (O\times Z(\Ra))$, where $O$ is any $(\Ka^{\times})^{d_1+d_2}$-orbit in $\Ka^{d_1}\times \Ka^{d_2}$.
 If $O$ is contained in $\Ka^{d_1}\times \{0\}$ then $S'_0$ is empty.
 Thus, by induction on $d_1+d_2$, we may assume that $d_2>0$ and $$O=(\Ka^{\times})^{d_1}\times (\Ka^{\times})^{d_2}.$$

 We write $$\pi:U\to \AA_k^{d_1}\times \AA^{d_2}_k$$ for the projection onto the first two factors.
 Consider the semi-algebraic set $$S'=\{(x,y,z)\in \cV(\Ka)\,|\,(x,y)\in O,\ z\in Z(\Ra),\ \spe_{Z\times_k R_0}(z)=p\}$$
  and the function $$\varphi:\Q^{d_1}\times \Q^{d_2}\to \gro^{\gal}(\Var_k):v\mapsto \Vol(S'\cap (\trop\circ \pi)^{-1}(v)).$$
   The set $S'_0$
   is the subset of $S'$ defined by the conditions $\val(x_i)\geq 0$ and $\val(y_j)>0$ for $i\in \{1,\ldots,d_1\}$ and $j\in \{1,\ldots,d_2\}$.
     Let $\Gamma=(\QQ_{\geq 0})^{d_1}\times (\Q_{>0})^{d_2}$.
By Theorem~\ref{thm:fubini} we can compute $\Vol(S'_0)$ by means of the constructible integral
$$\Vol(S'_0)=\int_{\Gamma}\varphi\,d\chi'.$$
  Let $w\in \Z^{d_1}_{>0}\times \Z_{<0}^{d_2}$ be the weight vector of the $\GG_{m,k}$-action on
 $\AA^{d_1}_k\times \AA^{d_2}_k$, and let $$\nu:\Q^{d_1}\times \Q^{d_2}\to \Q^{d_1}\times \Q^{d_2-1}$$ be the projection
 in the direction of $w$ onto the product of $\Q^{d_1}$ with any coordinate hyperplane in $\Q^{d_2}$.
   Since $S'$ is stable under the $\GG_{m,k}(\Ka)$-action on $O\times Z(\Ka)$, Proposition~\ref{prop:torstable} implies that
   the function $\varphi$ is constant on every fiber of $\nu$. Moreover,
   the intersection of $\Gamma$ with any fiber of $\nu$ is either a half-open bounded line segment (if $d_1>0$) or an open half-line (if $d_1=0$).
   In both cases, its bounded Euler characteristic vanishes.  Thus
 $\Vol(S'_0)=0$ by Proposition~\ref{prop:fubini}.
  \end{proof}

\begin{rem}\label{rem:KS}
In the general statement of Kontsevich and Soibelman's integral identity conjecture, $f$ is a formal function, rather than a polynomial.
 It is claimed in \cite{thuong} that the results of Hrushovski and Kazhdan extend to the formal-rigid setting, and this claim is then used to prove
 Conjecture B when $f$ is a formal power series. Unfortunately, we have not been able to construct a proof of this claim. If it holds, then
 our methods can also be generalized to the case where $f$ is formal.
\end{rem}

\section{Further generalizations}\label{sec:general}

\subsection{Relative Grothendieck rings}\label{ssec:rel}
We can refine the preceding constructions by working relatively over a base variety instead of over $k$. Let $B$ be a Noetherian $k$-scheme
 with trivial $\gal$-action.
  When we speak of a $B$-scheme $X$ with $\gal$-action, we will always assume that the structural morphism $X\to B$ is $\gal$-equivariant.
 The Grothendieck group $\gro^{\gal}(\Var_B)$ of $B$-varieties with $\gal$-action is the abelian group characterized by the following presentation:
\begin{itemize}
\item {\em Generators}: isomorphism classes of $B$-schemes of finite type $X$ endowed with a good $\gal$-action. Here ``good'' means that the action factors through $\mu_n(k)$ for some $n>0$ and that every orbit is contained in an affine open subscheme of $X$.
  Isomorphism classes are taken with respect to $\gal$-equivariant isomorphisms over $B$.
\item {\em Relations}: we consider two types of relations.
\begin{enumerate}
\item {\em Scissor relations}: if $X$ is a $B$-scheme of finite type with a good $\gal$-action and $Y$ is a $\gal$-stable closed subscheme of $X$, then
$$[X]=[Y]+[X\setminus Y].$$
\item {\em Trivialization of linear actions}: let $X$ be a $B$-scheme with a good $\gal$-action, and let $V$ be a $k$-vector space
of dimension $d$ with a good linear action of $\gal$.
Then $$[X\times_k V]=[X\times_k \AA^d_k]$$ where the $\gal$-action on $X\times_k V$ is the diagonal action, the action on $\AA^d_k$ is trivial, and the $B$-structures are induced by the one on $X$.
\end{enumerate}
\end{itemize}
The group $\gro^{\gal}(\Var_B)$ has a unique ring structure such that $[X]\cdot [X']=[X\times_B X']$ for all $B$-schemes of finite type $X$, $X'$ with good $\gal$-action. Here the $\gal$-action
on $X\times_B X'$ is the diagonal action.
 The identity element in $\gro^{\gal}(\Var_B)$ is $[B]$, the class of the base scheme $B$.
 With a slight abuse of notation, we continue to write $\LL$ for the class of $\AA^1_B$ (with the trivial $\gal$-action) in the ring $\gro^{\gal}(\Var_B)$.

 Every morphism $p:B'\to B$ of Noetherian $k$-schemes induces a base change morphism of rings
 $$p^*:\gro^{\gal}(\Var_B)\to \gro^{\gal}(\Var_{B'}):[X]\mapsto [X\times_B B']$$
 and, if $p$ is of finite type, a pushforward morphism of groups
 $$p_{!} :\gro^{\gal}(\Var_{B'})\to \gro^{\gal}(\Var_{B}):[X]\mapsto [X]$$
 that forgets the $B'$-structure. The pullback morphism $p^*$ sends $\LL$ to $\LL$; the pushforward morphism $p_{!}$ sends $\LL$ to $[B']\cdot \LL$.

 The construction of the motivic volume can be refined to a relative setting by means of the following results.
\begin{lemma}\label{lemm:spread}
Let $B$ be a Noetherian $k$-scheme. For every point $b$ of $B$, we denote by $\kappa(b)$ the residue field of $B$ at $b$ and by $\iota_b:\Spec \kappa(b)\to B$ the inclusion map. Then the morphism
$$\iota=\prod_{b\in B}\iota_b^{\ast}:\gro^{\gal}(\Var_B)\to \prod_{b\in B} \gro^{\gal}(\Var_{\kappa(b)})  $$
is injective.
\end{lemma}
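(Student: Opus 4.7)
The plan is to argue by Noetherian induction on $B$. Suppose $\alpha \in \gro^{\gal}(\Var_B)$ lies in the kernel of $\iota$, and let $\eta_1, \ldots, \eta_r$ be the generic points of the irreducible components of $B$. By hypothesis, $\iota_{\eta_j}^*(\alpha) = 0$ in $\gro^{\gal}(\Var_{\kappa(\eta_j)})$ for every $j$, and we wish to deduce that $\alpha$ already vanishes after restriction to some open neighborhood $U_j$ of $\eta_j$ in $B$. Granted such $U_j$, I would shrink them to be pairwise disjoint (possible since distinct generic points of a Noetherian scheme have disjoint open neighborhoods), set $U = \bigsqcup_j U_j$ and $Z = B \setminus U$, and use the scissor relations to write
$$\alpha = j_!(j^*\alpha) + i_!(i^*\alpha) = i_!(i^*\alpha),$$
where $j: U \hookrightarrow B$ and $i: Z \hookrightarrow B$ are the inclusions. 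For every $b \in Z$, the map $\iota_b: \Spec \kappa(b) \to B$ factors through $i$ via some $\iota_b^Z: \Spec\kappa(b) \to Z$, and proper base change for immersions gives $\iota_b^* \circ i_! = (\iota_b^Z)^*$, while $\iota_b^* \circ i_! = 0$ for $b \in U$ (since the fiber of a $Z$-scheme over a point of $U$ is empty). Hence $\iota(\alpha) = 0$ implies that the analogous morphism for $Z$ vanishes on $i^*\alpha$, and since $Z$ is a proper closed subscheme of $B$, Noetherian induction yields $i^*\alpha = 0$, and therefore $\alpha = 0$.

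The heart of the argument is thus the spreading-out step: for each generic point $\eta$ of $B$, I would show that the natural map
$$\varinjlim_{U \ni \eta} \gro^{\gal}(\Var_U) \longrightarrow \gro^{\gal}(\Var_{\kappa(\eta)})$$
is an isomorphism, where the colimit runs over open neighborhoods of $\eta$ in $B$. Surjectivity is a standard EGA IV \S 8 spreading-out argument for finite type schemes, combined with the observation that a good $\gal$-action factors through some finite quotient $\mu_n(k)$ and therefore also extends over a suitable open neighborhood, as does the covering by $\gal$-stable affine open subschemes. Injectivity rests on the fact that any identity witnessing $\iota_{\eta}^*(\alpha) = 0$ in $\gro^{\gal}(\Var_{\kappa(\eta)})$ is built from finitely many applications of scissor relations and linear trivialization relations, each of which involves only finitely many $\kappa(\eta)$-schemes, morphisms, and closed subschemes; all of these spread out over a common open neighborhood of $\eta$, on which the corresponding identity then holds in the relative Grothendieck ring.

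The main obstacle is this spreading-out identification, and in particular the need to spread out not merely the underlying $\kappa(\eta)$-schemes but also the good $\gal$-actions, the $\gal$-stable affine coverings, and the closed immersions that witness the scissor relations. Once this colimit identification is in place, the reduction to a proper closed subscheme via scissor relations and the Noetherian induction are formal, and the argument terminates after finitely many steps because $B$ is Noetherian.
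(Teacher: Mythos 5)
Your proof is correct and follows essentially the same strategy as the paper's: both rely on the spreading-out identification (the paper cites \cite[3.4]{NiSe-K0}, which is the ungraded, non-equivariant analogue you are re-deriving) to show that vanishing of $\iota_\eta^*(\alpha)$ forces vanishing of $\alpha$ on a neighborhood of $\eta$, and then apply Noetherian induction together with the scissor relations. Your write-up is a useful expansion of what the paper leaves terse: you make the Noetherian induction explicit (separating the generic points of the irreducible components by disjoint opens, decomposing $\alpha = j_!(j^*\alpha) + i_!(i^*\alpha)$, and transporting the hypothesis to the closed complement), and you correctly flag that the spreading-out must handle not only the underlying $\kappa(\eta)$-schemes but also the good $\gal$-actions, the $\gal$-stable affine covers, and the closed immersions witnessing each relation. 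One small stylistic difference: the paper phrases the colimit as running over open neighborhoods of $b$ in $\overline{\{b\}}$ (with its reduced structure), whereas you take opens of $B$ containing $\eta$; for a generic point of an irreducible component these two directed systems are cofinal (shrinking any open $U\ni\eta$ by removing the other irreducible components yields an open of $B$ contained in $\overline{\{\eta\}}$), so the colimits agree and either formulation works.
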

\begin{proof}
Let $b$ be a point of $B$ and let $Z$ be its Zariski closure in $B$, endowed with its reduced induced structure.
Then one can copy the proof of \cite[3.4]{NiSe-K0} to show that $\gro^{\gal}(\Var_{\kappa(b)})$ is the direct limit of the rings
$\gro^{\gal}(\Var_{U})$ where $U$ runs through any fundamental system of open neighbourhoods of $b$ in $Z$. Now the result follows from
Noetherian induction and the scissor relations in the Grothendieck ring.
\end{proof}

Let $Y$ be a $\Ka$-scheme of finite type and let $T$ be a semi-algebraic subset of $Y$.
 For every algebraically closed valued field extension $L$ of $\Ka$, the formulas that define $T$ in $Y(\Ka)$ also define a semi-algebraic subset of
 $Y(L)$, which we will denote by $T(L)$. This set does not depend on the choice of the formulas defining $T$, by quantifier elimination for algebraically closed
 valued fields.

\begin{prop}\label{prop:relvol}
Let $\cY$ be an $R_0$-scheme of finite type and let $S$ be a semi-algebraic subset of $\cY(\Ra)$ defined over $K_0$.
  For every point $y$ in $\cY_k$ we denote by $\kappa(y)$ the residue field of $\cY_k$ at $y$. We set $R_y=\kappa(y)\llb t\rrb$ and $K_y=\kappa(y)\llpar t\rrpar$ and we fix an algebraic closure $\Ka_y$ of $K_y$.
  We denote by $\iota_y$ the inclusion map $\Spec \kappa(y)\to \cY_k$.
 Then there exists a unique element $\alpha$ in $K_0^{\gal}(\Var_{\cY_k})$ such that, for every point $y$ of $\cY_k$, we have
 $$\iota_y^\ast(\alpha)=\Vol(S(\Ka_y)\cap \spe^{-1}_{\cY\times_R R_y}(y))$$ in $\gro^{\gal}(\Var_{\kappa(y)})$.
  This element satisfies
  $p_{!}(\alpha)=\Vol(S)$ in $\gro^{\gal}(\Var_k)$, where $p$ denotes the projection $\cY_k\to \Spec k$.
\end{prop}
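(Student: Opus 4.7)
The plan is to deduce uniqueness from Lemma \ref{lemm:spread} and to establish existence by relativizing the construction in the proof of Theorem \ref{thm:HK} over the base $\cY_k$.  Uniqueness is immediate: since $\iota$ is injective, at most one element of $\gro^{\gal}(\Var_{\cY_k})$ can realize the prescribed family of fiberwise classes $\iota_y^\ast(\alpha)$ as $y$ ranges through $\cY_k$.

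For existence, I would revisit the proof of Theorem \ref{thm:HK}, enhancing each step with compatibility over $\cY_k$ via specialization.  Recall that the proof shows $\gro(\VF_{K_0})$ is generated by classes of the form $[\cX(\Ra) \times \trop^{-1}(\Gamma)]$, where $\cX$ is a smooth $R$-model with good $\gal$-action and $\Gamma$ is a polyhedron in $\Q^n$.  Applied to the semi-algebraic subset $S \subset \cY(\Ra)$, this yields a decomposition in which each smooth piece $\cX_i$ is equipped with a $\gal$-equivariant morphism $\cX_i \to \cY$ refining the specialization map, because the semi-algebraic bijections produced by Hensel's lemma in the proof can be arranged to commute with the projection to $\cY$.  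I would then set
$$\alpha \;=\; \sum_i \chi'(\Gamma_i)\,(\LL-1)^{n_i}\,[\cX_{i,k}\to\cY_k] \;\in\; \gro^{\gal}(\Var_{\cY_k}),$$
where each summand denotes the class of $\cX_{i,k}$ viewed as a $\cY_k$-scheme via the induced structural morphism.  The fiberwise condition $\iota_y^\ast(\alpha)=\Vol(S(\Ka_y)\cap \spe^{-1}_{\cY\times_R R_y}(y))$ then follows by base-changing the decomposition from $K_0$ to $K_y$ and invoking Theorem \ref{thm:HK} over $K_y$ to compute the volume of each base-changed piece $\cX_i\times_R R_y$.  The identity $p_{!}(\alpha)=\Vol(S)$ is immediate, since $p_{!}$ forgets the $\cY_k$-structure, sending $[\cX_{i,k}\to\cY_k]$ to $[\cX_{i,k}]$ and thereby reproducing the absolute formula of Theorem \ref{thm:HK}.

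The main obstacle lies in the decomposition step: ensuring that the semi-algebraic bijections in the proof of Theorem \ref{thm:HK} --- constructed from fiber products of \'etale morphisms via Hensel's lemma and currently required to respect only the $\gal$-action --- can be refined to simultaneously respect an auxiliary morphism to $\cY$.  Since such fiber products are stable under arbitrary base change and the resulting bijections are uniquely determined by their reductions modulo $t$, the construction should go through for formal reasons analogous to those used to establish $\gal$-equivariance (namely, by linearizing the relevant action on completed local rings).  Carrying both the $\gal$-action and the $\cY$-structure through the argument simultaneously is the only substantive bookkeeping difficulty.
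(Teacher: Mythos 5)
Your uniqueness argument via Lemma \ref{lemm:spread} is exactly the paper's. For existence, however, you attempt a route that the paper deliberately avoids, and the step you flag as ``the only substantive bookkeeping difficulty'' is in fact a genuine gap, not a bookkeeping issue.

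The problem is in the sentence ``this yields a decomposition in which each smooth piece $\cX_i$ is equipped with a $\gal$-equivariant morphism $\cX_i \to \cY$ refining the specialization map.'' The statement that $\gro(\VF_{K_0})$ is generated by classes $[\cX(\Ra)\times\trop^{-1}(\Gamma)]$ (claim (a) in the proof of Theorem~\ref{thm:HK}) is a \emph{ring-theoretic} consequence of the surjectivity of the Hrushovski--Kazhdan map $\Theta$; it does not assert that a given semi-algebraic set $S\subset\cY(\Ra)$ admits a geometric partition into pieces semi-algebraically isomorphic to sets of that form, still less that the smooth models $\cX_i$ arising in such an isomorphism come equipped with $R$-morphisms to $\cY$ compatible with specialization. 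The pieces $\cX_i$ in the Hrushovski--Kazhdan theory are abstractly defined in the $\RES$ sort; a semi-algebraic bijection $\cX_i(\Ra)\times\trop^{-1}(\Gamma_i)\to S_i\subset\cY(\Ra)$ carries no algebraic morphism $\cX_{i,k}\to\cY_k$ with it, and there is no analogue of the formal rigidity argument (linearizing on completed local rings) that would produce one: that argument controls automorphisms that are trivial on the special fiber, which is unrelated to compatibility with an auxiliary projection to $\cY$. Your appeal to ``fiber products of \'etale morphisms and Hensel's lemma'' conflates claim (b) of the proof of Theorem~\ref{thm:HK} (a local comparison between $\cX(\Ra)$ and $(\cX_k\times_k R)(\Ra)$, constructed \emph{a posteriori}) with the decomposition itself, which comes from the model-theoretic machinery of \cite{HK} and is not built from \'etale covers of $\cY$.

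The paper circumvents all of this by going through the tropical/sch\"on machinery rather than the Hrushovski--Kazhdan generators: in Step~1 it starts from a sch\"on subvariety $Y$ of a torus and uses the toric $R_0$- and $R$-schemes $\PP_0(\Sigma)$ and $\PP(\Sigma)$, whose schematic closures $\cY$ and $\cX$ come with a canonical morphism $h:\cX_k\to\cY_k$ by construction; the element $\alpha$ is then defined by the explicit formula $\alpha=\sum_{\gamma\ \mathrm{bounded}}[\cX_k(\gamma)\cap h^{-1}(C)](1-\LL)^{\dim\gamma}$, and the fiberwise compatibility is checked by noting that the toric constructions and the semi-algebraic bijection $\psi$ from Proposition~\ref{prop:schonfubini} commute with extensions of the residue field. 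Step~2 then reduces the general case to the sch\"on case via Proposition~\ref{prop:schon} (Luxton--Qu), where again the monomial morphism $T'\to T$ produces the needed morphism $\cU^o_k\to\cY_k$ of special fibers. It is precisely this explicit geometric packaging that supplies the morphisms to $\cY_k$ that your approach would need to extract from the abstract decomposition, and that extraction is the missing ingredient in your proposal.
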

\begin{proof}
 Rather than going through all the constructions in \cite{HK}, we will give a proof based on our computation of the motivic volume in the sch{\"o}n case (Proposition \ref{prop:schonfubini}).
 Uniqueness of $\alpha$ follows immediately from Lemma \ref{lemm:spread}, so it suffices to prove existence.

{\em Step 1: the sch{\"o}n case.}
We first prove the assertion in the following special case. Let $Y$ be a sch{\"o}n closed subvariety of $\GG^{n}_{m,K_0}$, for some $n>0$.
Set $X=Y\times_{K_0}K$ and let $\Sigma$ be a tropical fan for $X$ in $\R^n\times\R_{\geq 0}$. This fan defines a toric $R_0$-scheme $\PP_0(\Sigma)$ as well as a toric $R$-scheme $\PP(\Sigma)$; the latter is the normalization of $\PP_0(\Sigma)\times_{R_0}R$.
 Let $\cY$ be the schematic closure of $Y$ in $\PP_0(\Sigma)$, and let $\cX$ be the schematic closure of $X$ in $\PP(\Sigma)$.
 For every cell $\gamma$ in $\Sigma_1$, we denote by $\cX_k(\gamma)$ the intersection of $\cX_k$ with the torus orbit of $\PP(\Sigma)_k$ corresponding to $\gamma$.
  The natural $R_0$-morphism $\PP(\Sigma)\to \PP_0(\Sigma)$ induces a morphism of $k$-schemes $h:\cX_k\to \cY_k$.
  Let $C$ be a constructible subset of $\cY_k$ and set $S=\spe^{-1}_{\cY}(C)\cap Y(\Ka)$. We set
 $$\alpha=\sum_{\gamma\ \mathrm{bounded}}[\cX_k(\gamma)\cap h^{-1}(C)](1-\LL)^{\dim(\gamma)}$$ in $\gro^{\gal}(\Var_{\cY_k})$, where $\gamma$ runs through the set of bounded cells in $\Sigma_1$. We claim that $\alpha$ satisfies all the properties in the statement.
  To prove this, we may assume that $h^{-1}(C)$ is contained in a unique stratum $\cX_k(\gamma)$, by additivity.
 Then the claim follows immediately from the fact that the construction of the pairs $(\PP_0(\Sigma),\cY)$ and $(\PP(\Sigma),\cX)$ is compatible with extensions of the residue field $k$,
 and the semi-algebraic bijection $\psi$ constructed in the proof of Proposition \ref{prop:fubini} commutes with specialization.

{\em Step 2: the general case.}
  By additivity, we may assume that $\cY$ is a subscheme of a split $R_0$-torus $\TT$ with cocharacter lattice $N$. By Proposition \ref{prop:schon}, we can further reduce to the case
  where there exist a monomial morphism of $K_0$-tori $\varphi:T'\to T=\TT_{K_0}$, a sch{\"o}n closed subvariety $U$ of $T'$ and a constructible subset $\Gamma$ of $N'_{\QQ}$ (where $N'$ is the cocharacter lattice of $T'$) such that $\varphi$ maps $S'=U(\Ka)\cap \trop^{-1}(\Gamma)$ bijectively onto $S$.
   Let $\nu:N'\to N$ be the affine map of cocharacter lattices associated with $\varphi$. Since $S$ is contained in $\TT(\Ra)$, we know that $\trop(S')$ is contained in the affine subspace
   $A=\nu_{\Q}^{-1}(0)$ of $N'_{\Q}$. Intersecting $\Gamma$ with $A$, we may assume that $\Gamma$ is contained in $A$.

 Let $\Sigma$ be a tropical fan for $U$ in $N'_{\R}\times \R_{\geq} 0$.
 Every refinement of $\Sigma$ is still a tropical fan for $U$, so that we may assume that $A\cap \Sigma_1$ and $\Gamma$  are unions of cells in the polyhedral complex $\Sigma_1$.
  By additivity, we can then further reduce to the case where $\Gamma$ is a relatively open cell in $\Sigma_1$.
The morphism
 $\varphi:T'\to T$ extends on an $R_0$-morphism $$\widetilde{\varphi}:(\PP_0(\Sigma)\setminus D) \to \TT$$ where $D$ is the union of the irreducible components in $\PP_0(\Sigma)_k$ corresponding to vertices of $\Sigma_1$ that do not lie in $A$.
 If we denote by $\cU$ the schematic closure of $U$ in $\PP_0(\Sigma)$, then $S'$ is contained in $(\cU\setminus D)(\Ra)$ because $\trop(S')$ is contained in $A$. Moreover, we can write $S'$ as $\spe^{-1}_{\cU}(C)\cap U(\Ka)$ where $C$ is the the intersection of $\cU_k$ with the torus orbit of $\PP_0(\Sigma)_k$ corresponding to the cell $\Gamma$.

   We can apply step 1 of the proof to the $R_0$-scheme $\cU$ and the semi-algebraic set
   $S'$.
  This yields an element $\alpha'$ in
 $\gro^{\gal}(\Var_{\cU^o_k})$, where $\cU^o_k=\cU_k\setminus D$. Let $\phi:\cU^o_k\to \cY_k$ be the morphism obtained from $\widetilde{\varphi}$ by restriction. Then $\alpha=\phi_{!}(\alpha')$ satisfies all the properties in the statement: we have $$p_{!}(\alpha)=(p\circ \phi)_{!}(\alpha')=\Vol(S')=\Vol(S).$$
 Furthermore, let $y$ be a point of $\cY_k$,
 write $Z=\cU_k\times_{\cY_k}y$, and denote by $\iota_Z$ the projection morphism $Z\to \cU_k$ and by $q$ the projection $Z\to \Spec \kappa(y)$. Then
  $$\iota_{y}^\ast\phi_{!}(\alpha')=q_{!}\iota_{Z}^{\ast}(\alpha')=\Vol(S'(\Ka_y)\cap \spe^{-1}_{\cU\times_{R_0} R_y}(Z))=\Vol(S(\Ka_y)\cap \spe^{-1}_{\cY\times_{R_0} R_y}(y))$$
 in $\gro^{\gal}(\Var_{\kappa(y)})$, where the second equality again follows from step 1, applied to the $R_y$-scheme $\cV=\cU\times_{R_0}R_y$ and the semi-algebraic set
 $$S'(\Ka_y)\cap \spe^{-1}_{\cV}(Z)=\spe^{-1}_{\cV}(Z\cap (C\times_k \kappa(y)))\cap U(\Ka_y) .$$
\end{proof}

Let $\cY$ be an $R_0$-scheme of finite type and let $S$ be a semi-algebraic subset of $\cY(\Ra)$ defined over $K_0$. Then
we will continue to denote the unique object $\alpha$ in Proposition \ref{prop:relvol} by
$$\Vol(S)\in \gro^{\gal}(\Var_{\cY_k}).$$ This is a harmless abuse of notation: this object is mapped to the motivic volume $\Vol(S)\in \gro^{\gal}(\Var_k)$ from Theorem \ref{thm:HK} by the pushforward morphism $$p_{!}:\gro^{\gal}(\Var_{\cY_k})\to \gro^{\gal}(\Var_{k})$$ associated with the projection $p:\cY_k\to \Spec k$.
 If $\iota:Z\to \cY$ is an immersion, then the definition of $\Vol(S)$ implies at once that
 $$\Vol(S\cap \spe^{-1}_{\cY}(Z))=\iota_{!}\iota^\ast\Vol(S)$$ in $\gro^{\gal}(\Var_{\cY_k})$.

With these refinements at hand, we can now upgrade the comparison result with Denef and Loeser's motivic nearby fiber and prove the relative version of the Davison-Mainhardt conjecture.

\begin{prop}[Motivic volume of a strict normal crossings model]\label{prop:sncrel}
With the notations and assumptions of Theorem \ref{thm:snc}, we have
$$\Vol(\cX(\Ra))=\sum_{\emptyset \neq J \subset I}(1-\LL)^{|J|-1}[\widetilde{E}_J^o]$$ in $\gro^{\gal}(\Var_{\cX_k})$.
\end{prop}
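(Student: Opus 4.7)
My plan is to use the characterization of the relative motivic volume in Proposition \ref{prop:relvol} to reduce the identity to a pointwise statement, and then invoke Theorem \ref{thm:snc} at each residue field. Writing $\alpha:=\Vol(\cX(\Ra))$ and $\beta:=\sum_{\emptyset\neq J\subset I}(1-\LL)^{|J|-1}[\widetilde{E}_J^o]$ for the two sides of the desired equality in $\gro^{\gal}(\Var_{\cX_k})$, Lemma \ref{lemm:spread} allows us to verify that $\iota_y^*\alpha=\iota_y^*\beta$ for every point $y$ of $\cX_k$.

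For the right hand side, set $J(y):=\{i\in I:y\in E_i\}$. Since the strata $E_J^o$ partition $\cX_k$, the point $y$ lies in $E_{J(y)}^o$ and in no other stratum. Because each $\widetilde{E}_J^o\to E_J^o$ is a $\mu_{N_J}(k)$-torsor and the structure morphism $\widetilde{E}_J^o\to\cX_k$ factors through $E_J^o$, we obtain
$$\iota_y^*\beta=(1-\LL)^{|J(y)|-1}\bigl[\widetilde{E}_{J(y)}^o\times_{E_{J(y)}^o}\Spec\kappa(y)\bigr]$$
in $\gro^{\gal}(\Var_{\kappa(y)})$. For the left hand side, Proposition \ref{prop:relvol} gives
$$\iota_y^*\alpha=\Vol\bigl(\spe^{-1}_{\cX\times_{R_0}R_y}(y)\bigr),$$
where $R_y=\kappa(y)\llb t\rrb$ and $y$ is viewed as a closed point of the special fiber via the diagonal embedding $\Spec\kappa(y)\to\cX_k\times_k\kappa(y)$. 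The next step is to apply Theorem \ref{thm:snc} to the $R_y$-model $\cX\times_{R_0}R_y$ with $C=\{y\}$: the components of each $E_i\times_k\kappa(y)$ are smooth and pairwise disjoint, so there is a unique lifted stratum containing $y$ whose cardinality equals $|J(y)|$, and the normalized cover $\widetilde{E}_J^o$ is compatible with the base change from $k$ to $\kappa(y)$ in characteristic zero. Consequently the one surviving term on the right hand side of Theorem \ref{thm:snc} is precisely $(1-\LL)^{|J(y)|-1}[\widetilde{E}_{J(y)}^o\times_{\cX_k}\Spec\kappa(y)]$, matching $\iota_y^*\beta$.

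The main obstacle will be verifying that Theorem \ref{thm:snc} genuinely applies to the base change $\cX\times_{R_0}R_y$, since the extension $R_0\to R_y$ is not of finite type when $\kappa(y)$ is a proper extension of $k$ (in particular when $y$ is not closed). Concretely, one must check that $\cX\times_{R_0}R_y$ remains regular with a strict normal crossings special fiber in a Zariski neighborhood of $y$, and that the formation of $\widetilde{E}_J^o$ commutes with this base change, both for the normalization step and for the induced $\mu_{N_J}$-action. Since $k$ has characteristic zero and contains all roots of unity, and the proof of Theorem \ref{thm:snc} is ultimately étale-local on $\cX$, these compatibilities can be read off from the explicit local coordinates $u_j^{M_j}$ used in its proof, which behave transparently under base extension of the residue field.
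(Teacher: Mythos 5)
Your proposal is correct and matches the paper's approach: the paper's proof is the one-line remark that the claim ``follows immediately from Theorem~\ref{thm:snc} and the fact that the property of being a strict normal crossings model is preserved under extensions of the residue field $k$,'' which is exactly the reduction you carry out in detail via Proposition~\ref{prop:relvol}, Lemma~\ref{lemm:spread}, and Theorem~\ref{thm:snc} applied over each $R_y$. Your final paragraph correctly identifies and resolves the only point the paper leaves implicit, namely the compatibility of the snc structure and the construction of the covers $\widetilde{E}^o_J$ with the (not finite type, but regular in characteristic zero) base change $R_0\to R_y$.
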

\begin{proof}
This follows immediately from Theorem \ref{thm:snc} and the fact that the property of being a strict normal crossings model is preserved under extensions of the residue field $k$.
\end{proof}
\begin{cor}[Comparison with the motivic nearby fiber]\label{cor:DLrel}
 Let $f:U\to \Spec k[t]$ be a morphism of varieties over $k$, with smooth generic fiber,
and denote by $\cX$ the base change of $U$ from $k[t]$ to $R_0=k\llb t\rrb$.
 Then
 the image of $\Vol(\cX(\Ra))\in \gro^{\gal}(\Var_{\cX_k})$ in the localized Grothendieck ring $\gro^{\gal}(\Var_{\cX_k})[\LL^{-1}]$ is
equal to Denef and Loeser's motivic nearby fiber of $f$.
\end{cor}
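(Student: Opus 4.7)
The plan is to mimic the proof of Corollary~\ref{cor:compar} in the relative setting, replacing Theorem~\ref{thm:snc} by its refinement Proposition~\ref{prop:sncrel} and carefully tracking the $\cX_k$-structure on every class that appears.

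First, I would choose a log resolution $h:Y\to U$ of the pair $(U,f^{-1}(0))$: a proper birational morphism with $Y$ smooth such that $(f\circ h)^{-1}(0)$ is a strict normal crossings divisor and $h$ is an isomorphism over the smooth locus of $f^{-1}(0)_{\red}$. Base change from $k[t]$ to $R_0$ produces a proper $R_0$-morphism $h:\cY\to\cX$ where $\cY=Y\times_{k[t]}R_0$ is regular with strict normal crossings special fiber, so that Proposition~\ref{prop:sncrel} applies to $\cY$.

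Next, I would verify that $h$ induces a semi-algebraic bijection $\alpha:\cY(\Ra)\to\cX(\Ra)$ defined over $K_0$ and satisfying $\spe_{\cX}\circ\alpha=h_k\circ\spe_{\cY}$. Surjectivity and injectivity come from the valuative criteria: every $\Ra$-point of $\cX$ restricts to a $\Ka$-point of $\cX_K$, which lifts uniquely to $\cY_K(\Ka)$ via the generic isomorphism, and then extends uniquely to an $\Ra$-point of $\cY$ by properness of $\cY\to\cX$ combined with separatedness of $\cY\to\Spec R_0$; the inclusion $h(\cY(\Ra))\subset\cX(\Ra)$ is automatic. The graph of $\alpha$ is cut out inside $(\cY\times_{R_0}\cX)(\Ra)$ by the graph of $h$ and is thus semi-algebraic; it is Galois-stable because $h$ is defined over $R_0$, so by Proposition~\ref{prop:galequiv} the map $\alpha$ is defined over $K_0$.

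From $\alpha$ and its compatibility with specialization I would deduce the relative pushforward identity
\[h_{k,!}\Vol(\cY(\Ra))=\Vol(\cX(\Ra))\]
in $\gro^{\gal}(\Var_{\cX_k})$: by the uniqueness statement of Proposition~\ref{prop:relvol} together with Lemma~\ref{lemm:spread}, it suffices to check the equality $\iota_x^{\ast}h_{k,!}\Vol(\cY(\Ra))=\iota_x^{\ast}\Vol(\cX(\Ra))$ at each point $x\in\cX_k$, and this follows because $\alpha$ identifies the fiber $\spe^{-1}_{\cX\times_{R_0}R_x}(x)$ with the disjoint union of the fibers $\spe^{-1}_{\cY\times_{R_0}R_x}(y)$ for $y\in h_k^{-1}(x)$, compatibly with the Galois actions. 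Applying Proposition~\ref{prop:sncrel} to $\cY$ and pushing forward along $h_{k,!}$ then yields
\[\Vol(\cX(\Ra))=\sum_{\emptyset\neq J\subset I}(1-\LL)^{|J|-1}\bigl[\widetilde{E}_J^o\xrightarrow{h_k}\cX_k\bigr]\]
in $\gro^{\gal}(\Var_{\cX_k})$, where the sum is taken over the strata of the special fiber of $\cY$. Comparing with Denef and Loeser's definition \cite[3.5.3]{DL}, which expresses $\psi_f^{\mathrm{mot}}\in\gro^{\gal}(\Var_{\cX_k})[\LL^{-1}]$ by precisely this formula on a log resolution, finishes the proof. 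The main obstacle I anticipate is the rigorous verification of the pushforward identity: checking that $\alpha$ is compatible with specialization in the strong sense demanded by Proposition~\ref{prop:relvol}, uniformly over all residue fields $\kappa(x)$, so that the identity holds in the relative ring $\gro^{\gal}(\Var_{\cX_k})$ and not merely after pushing forward to $\gro^{\gal}(\Var_k)$ as in Corollary~\ref{cor:compar}.
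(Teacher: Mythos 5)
Your proposal is correct and takes essentially the same route as the paper's one-line proof, which invokes Proposition~\ref{prop:sncrel} and Denef-Loeser's formula and leaves implicit precisely the steps you spell out: passing to a log resolution $\cY\to\cX$, constructing the $K_0$-defined semi-algebraic bijection $\cY(\Ra)\to\cX(\Ra)$ compatible with specialization, and deducing the pushforward identity $h_{k,!}\Vol(\cY(\Ra))=\Vol(\cX(\Ra))$ via Lemma~\ref{lemm:spread} and the uniqueness clause of Proposition~\ref{prop:relvol}. Your remark that the pushforward identity over all residue fields $\kappa(x)$ is the genuine content is exactly right, and that is the step the published proof delegates silently to the reader.
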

\begin{proof}
This follows from a direct comparison of the formula in Proposition \ref{prop:sncrel} with Denef and Loeser's formula for the motivic nearby fiber in \cite[3.5.3]{DL}.
\end{proof}

\begin{prop}[Relative Davison-Meinhardt conjecture]\label{prop:DMrel}
With the notations and assumptions of Theorem \ref{thm:DM}, the equality $$\Vol(\cV(\Ra))=[f^{-1}(1)]$$ is valid already in $\gro^{\gal}(\Var_Y)$, where we view both sides of the
equation as objects over $Y$ {\em via} the projection $p:U\to Y$.
\end{prop}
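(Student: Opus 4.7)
The plan is to track the $Y$-structure through the proof of Theorem \ref{thm:DM}, replacing each motivic volume in $\gro^{\gal}(\Var_k)$ by its relative refinement from Proposition \ref{prop:relvol} living in $\gro^{\gal}(\Var_{\cY_k})$, and then pushing forward to $\gro^{\gal}(\Var_Y)$ via the structure map $\cY_k \to Y$.

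First I would establish the two sides in $\gro^{\gal}(\Var_Y)$. For the smooth $R$-model $\cW = f^{-1}(1) \times_k R$, the construction underlying Proposition \ref{prop:relvol} gives $\Vol(\cW(\Ra)) = [f^{-1}(1)]$ in $\gro^{\gal}(\Var_{f^{-1}(1)})$ — the relative analogue of Theorem \ref{thm:HK}(1); pushing forward along $f^{-1}(1) \to Y$ produces the right-hand side $[f^{-1}(1)] \in \gro^{\gal}(\Var_Y)$. The left-hand side $\Vol(\cV(\Ra)) \in \gro^{\gal}(\Var_{f^{-1}(0)})$ is given directly by Proposition \ref{prop:relvol} and pushes forward along $f^{-1}(0) \to Y$. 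The $K_0$-definable semi-algebraic bijection $\cV(\Ka) \to \cW(\Ka)$, $(x,y) \mapsto (t^{-w_i/d} x_i, y)$, fixes the $y$-coordinate, so it commutes with both projections to $Y$ and the resulting decomposition of $\cV(\Ra)$ respects the $Y$-structure. An induction on $n$ reduces us to verifying $\Vol(S^o) = 0$ in $\gro^{\gal}(\Var_Y)$, where $S^o \subset (\Ka^\times)^n \times Y(\Ra)$ is the complement piece from the proof of Theorem \ref{thm:DM}.

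Second, I would verify this vanishing via a relative version of the Fubini computation. Since $Y$ enters Theorem \ref{thm:fubini} and Proposition \ref{prop:fan} only as a passive factor — all the tropical manipulations, sch\"on embeddings, and semi-algebraic bijections in Propositions \ref{prop:schonfubini}, \ref{prop:schon}, and \ref{prop:fan} are constructed over $Y$ — these results upgrade to statements in $\gro^{\gal}(\Var_Y)$: the function $\varphi(v) = \Vol(S^o \cap (\trop \circ \pi)^{-1}(v)) \in \gro^{\gal}(\Var_Y)$ is constructible, constant on the relative interiors of the cones of a suitable complete fan in $\Q^n$, and $\Vol(S^o) = \int_{\Gamma} \varphi \, d\chi'$ there. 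The $[0,1) \times \partial$ decomposition from the proof of Theorem \ref{thm:DM} then shows $\chi'(\sigma \cap \Gamma) = 0$ for each relevant cone $\sigma$, giving $\Vol(S^o) = 0$ in $\gro^{\gal}(\Var_Y)$.

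The main obstacle is the bookkeeping needed to verify that every semi-algebraic bijection and fan decomposition built in Sections \ref{sec:motvol} and \ref{sec:fubini} is functorial in an auxiliary base $Y$; no new ideas are required, but the verification is extensive. An alternative route would use Lemma \ref{lemm:spread} to reduce the claim to an equality in $\gro^{\gal}(\Var_{\kappa(y)})$ for each residue field $\kappa(y)$ of $Y$, where Theorem \ref{thm:DM} applies directly; this, however, depends on a base-change compatibility for the relative motivic volume along $\Spec \kappa(y) \to Y$ that, while natural, is not explicitly established in the excerpt.
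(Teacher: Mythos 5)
Your main proposal — upgrading the entire Fubini machinery of Section~\ref{sec:fubini} to relative versions over $Y$ — would work, since $Y$ enters Theorem~\ref{thm:fubini} and Proposition~\ref{prop:fan} only as a passive factor, but it entails exactly the kind of extensive re-verification you acknowledge at the end. The paper instead takes precisely the ``alternative route'' you sketch in your last paragraph: by Lemma~\ref{lemm:spread}, it suffices to check equality after pulling back along $\iota_y : \Spec \kappa(y) \to Y$ for each point $y$ of $Y$; after a base change from $k$ to $\kappa(y)$, the point $y$ becomes rational and Proposition~\ref{prop:relvol} together with the restriction formula
$\Vol(S\cap \spe^{-1}_{\cY}(Z))=\iota_{!}\iota^\ast\Vol(S)$
identifies the pullback of the relative volume with $\Vol(\spe^{-1}_{\cV\times_{R_0}R_y}(U_0\times_Y y))$; one then reruns the proof of Theorem~\ref{thm:DM} verbatim with $Y(\Ra)$ replaced by $\spe^{-1}_{Y\times_k R_0}(y)$. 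So what your alternative route ``buys'' is exactly the shortcut the authors took: one avoids re-establishing relative versions of Proposition~\ref{prop:schonfubini}, \ref{prop:schon}, and \ref{prop:fan}, and needs only the pointwise characterization of the relative volume. Your concern about base-change compatibility along $\Spec\kappa(y) \to \Spec k$ is a fair observation — it is indeed used implicitly — but it follows from the explicit sch\"on-model construction of $\alpha$ in the proof of Proposition~\ref{prop:relvol} (which is manifestly compatible with residue-field extensions) together with the two commuting restriction diagrams stated after Theorem~\ref{thm:HK}, so it is not a genuine gap, only a terse spot in the paper's exposition.
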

\begin{proof}
 The special fiber $\cV_k$ is canonically isomorphic to the closed subscheme $U_0$ of $U$ defined by $f=0$.
 Let $y$ be a point of $Y$ with residue field $\kappa(y)$, and set $R_y=\kappa(y)\llb t\rrb$.
We must show that
 $$\Vol(\spe^{-1}_{\cV\times_R R_y}(U_0\times_Y y))=[f^{-1}(1)\times_{Y} y]$$
 in $\gro^{\gal}(\Var_{\kappa(y)})$. Performing a base change from $k$ to $\kappa(y)$, we can reduce to the case where $y$ is $k$-rational.
 Now one can simply copy the proof of Theorem \ref{thm:DM}, replacing $Y(\Ra)$ by $\spe^{-1}_{Y\times_k R_0}(y)$.
\end{proof}

We now state and prove further natural generalizations of Theorems~\ref{thm:DM} and \ref{thm:KS}, replacing the affine spaces on which $\GG_m$ acts with positive weights by invariant subvarieties of circle compact toric varieties with $\GG_m$-action, as explained below.  The proofs are essentially identical to those in Section~\ref{sec:appli}.  Rather than repeating the details verbatim, we briefly indicate the minor changes that need to be made in each case.

\subsection{Circle compactness} Let $F$ be a field. Following \cite{BBS}, we say that a variety $X$ over $F$ with $\GG_{m,F}$-action is \emph{circle compact} if the limit of $s \cdot x$, as $s$ goes to zero in $\GG_{m,F}$, exists for every point $x$ in $X$.  Note that circle compactness depends on the choice of the $\GG_{m,F}$-action, not just the underlying variety.  For instance, if $\GG_{m,F}$ acts on $\AA^n_F$ with weights $w_1, \ldots, w_n$, then $\AA^n_F$ is circle compact if and only if each weight $w_i$ is nonnegative.  We characterize circle compact toric varieties as follows.

\begin{lem}\label{lem:circompact} Let $\TT_F$ be a split torus over $F$ with cocharacter lattice $N$. Let $\Sigma$ be a rational polyhedral fan in $N_{\R}$ and let
$X(\Sigma)$ be the corresponding toric variety. We choose a point $w$ in $N$ and we let $\GG_{m,F}$ act on $X(\Sigma)$ via the cocharacter $\gamma_w : \GG_{m,F} \rightarrow \TT_F$.  Then the following are equivalent.
\begin{enumerate}
\item The toric variety $X(\Sigma)$ is circle compact.
\item  The support  $|\Sigma|$ of the fan $\Sigma$ is star-shaped around $w$.
\item  The support  $|\Sigma|$ of the fan $\Sigma$ contains $|\Sigma| + w$.
\end{enumerate}
\end{lem}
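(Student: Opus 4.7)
The plan is to prove $(2)\Leftrightarrow(3)$ and $(1)\Leftrightarrow(3)$ separately; together these give all the equivalences. Both parts exploit that $|\Sigma|$ is a closed union of rational polyhedral cones through the origin, hence in particular closed under multiplication by positive scalars.

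For $(2)\Leftrightarrow(3)$, the argument is pure convex geometry. Given $(2)$ and $v\in|\Sigma|$, the midpoint $(w+v)/2$ of the segment from $w$ to $v$ lies in $|\Sigma|$ by star-shapedness, and scaling by $2$ gives $w+v\in|\Sigma|$, proving $(3)$. Conversely, iterating $(3)$ yields $v+nw\in|\Sigma|$ for every integer $n\geq 0$ and every $v\in|\Sigma|$; combining with positive rational scaling and closedness of $|\Sigma|$, one obtains $v+rw\in|\Sigma|$ for every real $r\geq 0$. Applying this with $r=(1-t)/t$ for $t\in(0,1]$ and rescaling by $t$ gives $tv+(1-t)w\in|\Sigma|$ for every $t\in[0,1]$, which is $(2)$.

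For $(1)\Leftrightarrow(3)$, the key ingredient is the standard characterization that for a point $x$ in the torus orbit $O(\tau)$ associated with some $\tau\in\Sigma$, the limit $\lim_{s\to 0}\gamma_w(s)\cdot x$ exists in $X(\Sigma)$ if and only if there exists $\sigma\in\Sigma$ with $\tau\preceq\sigma$ and $w\in\sigma+\Span(\tau)$. This is verified by computing in the affine chart $U_\sigma=\Spec F[\sigma^\vee\cap M]$, using that $O(\tau)\subseteq U_\sigma$ corresponds to the face $\tau^\perp\cap\sigma^\vee$ of $\sigma^\vee$, whose dual cone is $\sigma+\Span(\tau)$. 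Note also that when $\tau\preceq\sigma$ one has $\sigma+\Span(\tau)=\sigma-\tau:=\{s-t\mid s\in\sigma,\,t\in\tau\}$.

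For $(3)\Rightarrow(1)$, fix $\tau\in\Sigma$ and $v\in\relint(\tau)$; iterating $(3)$ and rescaling as in the convex-geometry argument yields $v+rw\in|\Sigma|$ for every real $r\geq 0$. For $r>0$ sufficiently small, $v+rw$ lies in a single cone $\sigma\in\Sigma$ containing $v$ (using finiteness of $\Sigma$ and closedness of its cones); since $v\in\relint(\tau)$ and $v\in\sigma$, this forces $\tau\preceq\sigma$. Then $w=r^{-1}((v+rw)-v)\in r^{-1}(\sigma-\tau)=\sigma-\tau$, as required. For $(1)\Rightarrow(3)$, given $v\in|\Sigma|$ with $v\in\relint(\tau)$, condition $(1)$ supplies a cone $\sigma\succeq\tau$ and a decomposition $w=s-t$ with $s\in\sigma$ and $t\in\tau$. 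Replacing $(s,t)$ by $(s+(1-\lambda)t,\lambda t)$ for $\lambda\in(0,1]$ preserves the decomposition, and choosing $\lambda$ small enough that $v-\lambda t\in\relint(\tau)$ gives $v+w=(v-\lambda t)+(s+(1-\lambda)t)\in\tau+\sigma=\sigma\subseteq|\Sigma|$. The main technical input is the cone-theoretic characterization of limit existence; once that is granted, each remaining step reduces to a short convex-geometry computation.
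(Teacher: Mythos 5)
Your $(2)\Leftrightarrow(3)$ argument is correct, and your statement of the criterion for existence of the limit $\lim_{s\to 0}\gamma_w(s)\cdot x$ for $x\in O(\tau)$ — namely that $w\in\sigma+\Span(\tau)$ for some $\sigma\succeq\tau$ — is also correct and is exactly equivalent to the paper's formulation in terms of $|\Star_\Sigma(\tau)|$. Your $(3)\Rightarrow(1)$ is a valid and rather clean alternative to the paper's $(2)\Rightarrow(1)$; the small-$r$ argument works because each set $\{r\geq 0: v+rw\in\sigma\}$ is a closed interval (the cones are convex), so one of the finitely many cones containing $v$ absorbs a half-open interval $[0,\epsilon)$.

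However, your proof of $(1)\Rightarrow(3)$ has a genuine gap. First, the algebra does not check out: with the substitution $(s,t)\leftarrow(s+(1-\lambda)t,\,\lambda t)$ you get $(s+(1-\lambda)t)-\lambda t = s+(1-2\lambda)t$, which equals $w=s-t$ only when $\lambda=1$, so the ``decomposition'' is not actually preserved for $\lambda<1$. Second, and more seriously, the underlying idea cannot be repaired at this level of locality: you are trying to show that $v+w$ lands in a cone $\sigma\succeq\tau$, where $\tau$ is the cone with $v\in\relint(\tau)$ and $\sigma$ is the cone supplied by condition $(1)$ at $\tau$. That is false in general. Take $\Sigma$ to be the fan of $\PP^2$ with rays $(1,0)$, $(0,1)$, $(-1,-1)$; it is complete, so circle compact for every $w$. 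Take $w=(1,-5)$, $v=(0,1)$, so $\tau=\R_{\geq 0}(0,1)$; the only cone $\sigma\succeq\tau$ with $w\in\sigma+\Span(\tau)$ is $\sigma=\R_{\geq 0}\langle(1,0),(0,1)\rangle$, but $v+w=(1,-4)$ lies in $\R_{\geq 0}\langle(1,0),(-1,-1)\rangle$, which does not contain $\tau$ as a face. Equivalently, in this example there is no decomposition $w=s-t$ with $s\in\sigma$, $t\in\tau$, and $v-t\in\tau$: any such $t$ must satisfy $t_2\geq 5$ while $v-t\in\tau$ requires $t_2\leq 1$.

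To close the gap you need a global argument. Two standard options: (i) prove $(1)\Rightarrow(2)$ by contraposition as the paper does, using a ray from $w$ that re-enters $|\Sigma|$ and showing the limit fails at the orbit of the re-entry point; or (ii) prove $(1)\Rightarrow(3)$ by a path-following argument: fix $v\in|\Sigma|$ and let $T=\sup\{t\geq 0: v+sw\in|\Sigma|\text{ for all }s\in[0,t]\}$. The set is closed since $|\Sigma|$ is closed, and your local computation (applied at the cone $\tau$ with $v+Tw\in\relint(\tau)$, with $l\in\Span(\tau)$ small enough that $v+Tw+\epsilon l\in\tau$) shows it is open to the right, so $T=\infty$. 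Either repair gives a complete proof; as written, your cycle of implications is broken at $(1)\Rightarrow(3)$.
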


\begin{proof}
Note that $X(\Sigma)$ is circle compact if and only if the limit of $s \cdot x$ exists for one point $x$ in each orbit of the dense torus.  Let $x_\tau$ be a point in the orbit corresponding to a cone $\tau \in \Sigma$.  Then the limit of $s \cdot x_\tau$ exists if and only if the image of $w$ in $N/ \mathrm{span}(\tau)$ is contained in $|\Star_\Sigma(\tau)|$.

Now, if $|\Sigma|$ is star-shaped around $w$, then the image of $w$ is contained in $|\Star_\Sigma(\tau)|$ for all $\tau$.  On the other hand, if $|\Sigma|$ is not star-shaped around $w$, then there is a closed ray starting from $w$ whose intersection with $|\Sigma|$ is disconnected. Let $w'$ be the point closest to $w$ in a connected component that does not contain $w$, and let $\tau \in \Sigma$ be the cone that contains $w'$ in its relative interior.  Then the limit of $s \cdot x_\tau$ does not exist.  This shows that the first two conditions are equivalent.

We now show that the second and third conditions are equivalent.  First, if $|\Sigma|$ is star shaped around $w$, then it is a union of convex cones that contain $w$, and hence it contains $|\Sigma| + w$.  On the other hand, suppose $|\Sigma|$ contains $|\Sigma| + w$ and let $w'$ be any point in $|\Sigma|$.  Then $|\Sigma|$ contains $rw'$ for all positive real numbers $r$, and hence it contains $rw' + w$.  Rescaling again shows that $|\Sigma|$ contains $\frac{rw' + w}{r+1}$ for all positive real numbers $r$, and hence it contains the open interval $(w,w')$.  This shows that $|\Sigma|$ is star shaped around $w$, and therefore the second and third conditions are equivalent, as claimed.
\end{proof}

\subsection{A generalization of the Theorem~\ref{thm:DM}}

We generalize~\ref{thm:DM}, replacing the affine space $\AA^d_k$ on which $\GG_{m,k}$ acts with positive weights by a $\GG_{m,k}$-invariant subvariety of a circle compact toric variety.

\begin{thm} \label{thm:genDM}
Let $X(\Sigma)$ be a toric variety over $k$, with $\GG_{m,k}$ acting by a cocharacter  $\gamma_w : \GG_{m,k} \rightarrow \TT_k$ of the dense torus $\TT_k$.  Suppose that $X(\Sigma)$ is circle compact, and let $$X \subset X(\Sigma)$$ be a $\GG_{m,k}$-invariant subvariety.

Let $Y$ be a $k$-variety endowed with the trivial $\GG_{m,k}$-action, let $U= X\times_k Y$, and let $$f:U\to \AA^1_k=\Spec k[t]$$ be a $\GG_{m,k}$-equivariant function, where $\GG_{m,k}$ acts on $\AA^1_k$ with weight $d>0$.
 We set $\cV=U\times_{k[t]}R_0$, and we endow $f^{-1}(1)$ with the $\gal$-action that factors through $\mu_d(k)$ and is given by
      $$\mu_d(k)\times f^{-1}(1)\to f^{-1}(1):(\zeta,(x,y))\mapsto (\gamma_w(\zeta) x,y).$$
            We view $\cV_k$ and $f^{-1}(1)$ as $Y$-schemes {\em via} the projection $U\to Y$.
 Then $$\Vol(\cV(\Ra))=[f^{-1}(1)]$$ in $\gro^{\gal}(\Var_Y)$
\end{thm}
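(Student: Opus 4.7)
I plan to follow the proof of Theorem~\ref{thm:DM} closely, replacing the affine space by the $\GG_{m,k}$-invariant subvariety $X\subset X(\Sigma)$ of a circle compact toric variety. First, I would set $\cW = f^{-1}(1)\times_k R$ with the $\gal$-action factoring through $\mu_d$ as in the statement; by Proposition~\ref{prop:relvol} one already has $\Vol(\cW(\Ra))=[f^{-1}(1)]$ in $\gro^{\gal}(\Var_Y)$. Then I would introduce the semi-algebraic bijection
$$\Phi: \cV(\Ka)\to f^{-1}(1)(\Ka),\quad (x,y)\mapsto(\gamma_w(t^{-1/d})x,\,y),$$
which is defined over $K_0$ since the substitution $t^{1/d}\mapsto \zeta t^{1/d}$ for $\zeta\in\mu_d$ is absorbed by the $\mu_d$-action on $\cW$. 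The key observation is that $\Phi(\cV(\Ra))$ contains $\cW(\Ra)$: by Lemma~\ref{lem:circompact}(3), circle compactness is equivalent to $|\Sigma|+w\subset|\Sigma|$, which in turn says that the $\GG_{m,k}$-action via $\gamma_w$ extends to a monoid action of $\AA^1_k$ on $X(\Sigma)$; since $t^{1/d}$ is an $\Ra$-point of $\AA^1_k$ (its valuation being $1/d>0$), $\gamma_w(t^{1/d})$ maps $X(\Sigma)(\Ra)$ to itself. It will then suffice to show $\Vol(T)=0$ for $T:=\Phi(\cV(\Ra))\setminus\cW(\Ra)$.

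The next step is to stratify $X(\Sigma)$ by torus orbits $O_\tau$ and, by additivity of $\Vol$, reduce to showing $\Vol(T_\tau)=0$ for each slice $T_\tau:=T\cap(O_\tau(\Ka)\times Y(\Ra))$. Let $w_\tau$ denote the image of $w$ in the cocharacter lattice $N_{(\tau)}:=N/(\spn(\tau)\cap N)$ of $O_\tau$. If $w_\tau=0$, then $\gamma_w$ acts trivially on $O_\tau$, the two conditions defining membership in $T_\tau$ become incompatible, and $T_\tau=\emptyset$. If $w_\tau\neq 0$, I would view $T_\tau$ as a semi-algebraic subset of $\GG^{w_\tau}_{m,k}\times_k Y$ defined over $k$ in the sense of Section~\ref{ss:tropfib}, and combine Theorem~\ref{thm:fubini} with Proposition~\ref{prop:fan} to write $\Vol(T_\tau)=\int_{\Gamma_\tau}\varphi_\tau\, d\chi'$, where
$$\Gamma_\tau:=(|\Star_\Sigma(\tau)|-w_\tau/d)\setminus|\Star_\Sigma(\tau)|$$
and $\varphi_\tau$ is a constructible function on $N_{(\tau),\Q}$ that is constant on the relatively open cones of some complete fan.

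The hard part, and the main obstacle, is the generalization of the $[0,1)\times\partial$ geometric lemma from the proof of Theorem~\ref{thm:DM}: one must show that $\chi'(\sigma\cap\Gamma_\tau)=0$ for each relatively open cone $\sigma$ of the refinement fan. The input is that circle compactness of $X(\Sigma)$ propagates to the star fan toric variety $X(\Star_\Sigma(\tau))$ with respect to $\gamma_{w_\tau}$, so that $|\Star_\Sigma(\tau)|$ is star-shaped around $w_\tau$ in $N_{(\tau),\R}$, and consequently $\Gamma_\tau$ forms a half-open slab of width $1/d$ in the $w_\tau$-direction. I would establish the key lemma by refining each $\sigma$ into polyhedral pieces on which the escape function $h(v):=\inf\{\lambda: v+\lambda w_\tau\in|\Star_\Sigma(\tau)|\}$ is linear, and then performing a Fubini-type reduction along the $h$-coordinate: the $(0,1/d]$-factor of the slab contributes $\chi'((0,1/d])=0$, while the transverse level sets within each piece are homothetic cross-sections of a cone and carry a constant compactly supported Euler characteristic. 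Summing $\Vol(T_\tau)$ over the strata $\tau$ then gives $\Vol(T)=0$, completing the proof.
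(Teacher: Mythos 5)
Your proposal follows the paper's intended route (adapt the proof of Theorem~\ref{thm:DM} and Proposition~\ref{prop:DMrel}, using Lemma~\ref{lem:circompact} to show the relevant constructible sets have $\chi'=0$), and the way you flesh it out — stratifying by torus orbits, identifying the tropical shell $\Gamma_\tau=(|\Star_\Sigma(\tau)|-w_\tau/d)\setminus|\Star_\Sigma(\tau)|$, and then slicing along a piecewise-linear ``escape'' function to integrate against $\chi'\bigl((0,1/d]\bigr)=0$ — is a legitimate way to make precise the $\chi'=0$ computation that the paper leaves implicit. One caveat in your justification of the containment $\cW(\Ra)\subset\Phi(\cV(\Ra))$: circle compactness does \emph{not} imply that the $\GG_{m,k}$-action via $\gamma_w$ extends to a morphism of schemes $\AA^1_k\times X(\Sigma)\to X(\Sigma)$; already $\PP^1_k$ with a nontrivial $\GG_m$-action is circle compact, yet the limit map at $s=0$ is discontinuous at the repelling fixed point, so the ``monoid action of $\AA^1_k$'' does not exist as a morphism. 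The statement you actually need — that $\gamma_w(t^{1/d})$ carries $X(\Sigma)(\Ra)$ into itself — is nevertheless true and should be argued stratum by stratum via tropicalization: a $\Ka$-point of $O_\tau$ extends to an $\Ra$-point of its orbit closure iff its tropicalization lies in $|\Star_\Sigma(\tau)|$, each orbit closure inherits circle compactness from $X(\Sigma)$, and Lemma~\ref{lem:circompact}(3) applied to the star fan gives $|\Star_\Sigma(\tau)|+w_\tau/d\subset|\Star_\Sigma(\tau)|$ — precisely the inclusion you invoke again later when analyzing $\Gamma_\tau$.
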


\begin{proof}
The proof is similar to that of Theorem~\ref{thm:DM} and Proposition~\ref{prop:DMrel}, using the fact that $|\Sigma|$ is star-shaped around $w/d$ and contains $|\Sigma|+w/d$, by Lemma~\ref{lem:circompact}, to show that the relevant constructible subsets of $N_{\Q}$ have bounded Euler characteristic zero.
\end{proof}

\subsection{A generalization of Theorem~\ref{thm:KS}}

As in the generalization of Theorem~\ref{thm:DM}, we replace the affine space $\AA^{d_1}_k$ on which $\GG_{m,k}$ acts with negative weights by a $\GG_{m,k}$-invariant subvariety of a circle compact toric variety.
 Let $X(\Sigma)$ be a toric variety over $k$, with $\GG_{m,k}$ acting by a cocharacter $\gamma_w : \GG_{m,k} \rightarrow \TT_k$ of the dense torus.  Suppose $X(\Sigma)$ is circle compact, and let $X \subset X(\Sigma)$ be a connected $\GG_{m,k}$-invariant closed subvariety, with $x_0 \in X$ a closed point.

We also replace the affine space $\AA^{d_2}_k$ on which $\GG_{m,k}$ acts with negative weights by a $\GG_{m,k}$-invariant subvariety of an affine toric variety with repelling fixed point, as follows.
 Let $U_{\sigma'}$ be the affine toric variety over $k$ corresponding to a rational polyhedral cone $\sigma'$, equipped with the $\GG_{m,k}$-action given by a cocharacter $\gamma_{w'}$, where $-w'$ is a lattice point in the interior of $\sigma'$.  Let $X' \subset U_{\sigma'}$ be a $\GG_{m,k}$-invariant closed subvariety.  Since $-w'$ is in the interior of $\sigma'$, the fixed point
\[
x_0' = \lim_{s' \rightarrow \infty} \gamma_{w'}(s').
\]
is repelling, and hence $X'$ is connected and contains $x_0'$.

\begin{rem}
The statement we are going to prove depends only on a $\GG_{m,k}$-invariant affine neighborhood of the repelling fixed point, so there is no loss of generality in assuming that this factor is affine.
\end{rem}

With the notation above, we have the following generalization of Theorem~\ref{thm:KS}.

\begin{thm}  \label{thm:genKS}

Let $Z$ be a variety over $k$, equipped with the trivial $\GG_{m,k}$-action, and let $p$ be a closed point on $Z$.  Let $U = X \times X' \times Z$, and let
\[
f: U \rightarrow \AA^1_k
\]
be a $\GG_{m,k}$-invariant function such that $f(x_0,x_0',p) = 0$.

We view $X$ and $Z$ as closed subvarieties of $U$ via the embeddings $x\mapsto (x,x_0',p)$ and $z\mapsto (x_0,x'_0,z)$, respectively, and set $\cV=U\times_{k[t]}R_0$  and $\cW=Z\times_{k[t]}R_0$.
 Then $f$ vanishes on $X$, and
$$\Vol(\spe^{-1}_{\cV}(X))= [X] \Vol(\spe^{-1}_{\cW}(p))$$
 in $\gro^{\gal}(\Var_k)$.
\end{thm}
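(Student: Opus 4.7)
The plan is to follow the proof of Theorem~\ref{thm:KS} with essentially the same strategy, replacing the explicit weighted-coordinate analyses on $\AA^{d_1}_k$ and $\AA^{d_2}_k$ by the combinatorial input of Lemma~\ref{lem:circompact} (circle compactness of $X(\Sigma)$ as the star-shaped condition $|\Sigma|+w\subset |\Sigma|$) together with the repelling fixed point condition $-w'\in \mathrm{int}(\sigma')$. I would first verify that $f$ vanishes on $X$: since $x_0'$ and $p$ are $\GG_{m,k}$-fixed, the restriction $g:x\mapsto f(x,x_0',p)$ is a $\GG_{m,k}$-invariant morphism $X\to \AA^1_k$, and circle compactness of $X(\Sigma)$ gives that every $\GG_{m,k}$-orbit in $X$ has a limit in $X^{\GG_{m,k}}$ as $s\to 0$; combining the connectedness of $X$, the vanishing $g(x_0)=0$, and the $\GG_{m,k}$-invariance of $g$ then forces $g\equiv 0$.

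Next I would partition $S=\spe^{-1}_{\cV}(X)$ into
$$S_1=\{v=(x,x',z)\in\cV(\Ra)\,|\,x'=x_0',\ \spe_{\cV}(v)\in X\}$$
and its complement $S_0=S\setminus S_1$. On $S_1$ the $\GG_{m,k}$-invariance of $f$ combined with the vanishing above gives $f(x,x_0',z)=f(x_0,x_0',z)=:h(z)$, independent of $x$, so $S_1\cong X(\Ra)\times \spe^{-1}_{\cW}(p)$ with $\cW=Z\times_{k[t]}R_0$ formed via $h$. Stratifying $X$ into smooth pieces and applying Theorem~\ref{thm:HK}\eqref{it:smoothvol} yields $\Vol(X(\Ra))=[X]$, and hence $\Vol(S_1)=[X]\cdot \Vol(\spe^{-1}_{\cW}(p))$.

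To prove $\Vol(S_0)=0$, I would stratify $X\times X'$ by orbits of the dense tori $T\subset X(\Sigma)$ and $T'\subset U_{\sigma'}$; by additivity and induction on the stratification (as in the proof of Theorem~\ref{thm:KS}) it suffices to treat a single piece $S'_0\subset (T\times T')(\Ka)\times Z(\Ra)$. The tropical Fubini theorem (Theorem~\ref{thm:fubini}) gives $\Vol(S'_0)=\int_\Gamma \varphi\,d\chi'$ for a constructible $\Gamma\subset N_\Q\oplus N'_\Q$ encoding the reduction conditions $\trop(x)\in|\Sigma|$ and $\trop(x')\in\mathrm{int}(\sigma')$. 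Proposition~\ref{prop:torstable} ensures that $\varphi$ is constant along the line spanned by the weight vector $(w,w')$. Lemma~\ref{lem:circompact} implies $\{t\,|\,v+tw\in|\Sigma|\}$ is a closed half-line $[a,+\infty)$, while $-w'\in\mathrm{int}(\sigma')$ implies $\{t\,|\,v'+tw'\in\mathrm{int}(\sigma')\}$ is an upper-bounded open interval $(-\infty,c)$; their intersection is a half-open bounded segment (or an open half-line if the $X$-factor is trivial), and in both cases the bounded Euler characteristic vanishes. Hence $\Vol(S'_0)=0$ by Proposition~\ref{prop:fubini}.

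The hard part will be the sch\"on reduction needed to handle $\GG_{m,k}$-invariant subvarieties of $X(\Sigma)\times U_{\sigma'}$ that are not themselves tori: one must adapt the stratification of Proposition~\ref{prop:schon} so that the positions of $|\Sigma|$ and $\mathrm{int}(\sigma')$ in the ambient tropical space are preserved under the intrinsic-torus embeddings, so that the decomposition of $\Gamma$ along the weight direction can be carried out uniformly. Once this tropical bookkeeping is in place, the vanishing of the bounded Euler characteristic of the relevant fibers transports from the toric model directly to $S'_0$, and the proof concludes as above.
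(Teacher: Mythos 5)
Your proof correctly identifies the skeleton of the argument, which is indeed the same as the paper's: partition $\spe^{-1}_{\cV}(X)$ according to whether $x'=x_0'$, compute the first piece directly, and kill the second using the Fubini theorem (Theorem~\ref{thm:fubini}), torus-equivariance (Proposition~\ref{prop:torstable}), and the combinatorics of $|\Sigma|\times\mathring{\sigma}'$ along the $(w,w')$ direction. Your analysis of why $|\Sigma|\cap(v+\Q w)$ is a closed half-line (from Lemma~\ref{lem:circompact}) and $\mathring{\sigma}'\cap(v'+\Q w')$ is an open, upper-bounded interval (from $-w'\in\mathrm{int}(\sigma')$) is exactly what the paper points to, and the conclusion that every nonempty fiber of the projection meets $\Gamma$ in a half-open segment or open ray with $\chi'=0$ is correct.

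Two points need adjustment. First, the concern you flag as "the hard part" — adapting the sch\"on decomposition of Proposition~\ref{prop:schon} to track $|\Sigma|$ and $\mathring{\sigma}'$ — is not actually an obstacle. Theorem~\ref{thm:fubini} is already stated for an arbitrary semi-algebraic subset of $\GG^n_{m,K_0}\times_{K_0}Y$; the sch\"on stratification is used internally in its proof and never needs to be revisited by the user. After stratifying $X\times X'$ by torus orbits, you simply apply the Fubini theorem plus Proposition~\ref{prop:torstable} to the $\Ka^\times$-stable set $S'$ (note that $\GG_m$-invariance of $X$ and $X'$ makes $S'$ stable under the full $\Ka^\times$-action with weight $(w,w')$, even though the tropical constraints from $|\Sigma|$ and $\mathring\sigma'$ are not), and the combinatorics of $\Gamma$ does the rest. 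Second, and more seriously, your derivations that $g=f(\cdot,x_0',p)$ vanishes identically on $X$ and that $f(x,x_0',z)$ is independent of $x$ (which you need for $\Vol(S_1)=[X]\Vol(\spe^{-1}_{\cW}(p))$) are incomplete. A $\GG_m$-invariant function $g$ on a connected circle-compact $X$ factors through the Bia\l ynicki-Birula limit map $x\mapsto\lim_{s\to 0}s\cdot x$ into $X^{\GG_{m}}$, but unless that image is a single point (as it is for $\AA^{d_1}_k$ with all weights positive in Theorem~\ref{thm:KS}), $g(x_0)=0$ does not force $g\equiv 0$; for instance $\AA^2$ with $\GG_m$ acting with weights $(1,0)$ is circle compact and connected, and $g=x_2$ is $\GG_m$-invariant, vanishes at $(0,0)$, yet is not identically zero. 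This step needs either an additional hypothesis guaranteeing a unique attractor (equivalently, $w$ in the interior of a unique maximal cone of $\Sigma$ that dominates every cone) or a different argument. The paper's own proof of this theorem is too terse to resolve this, but your write-up should at least flag the needed condition rather than asserting it follows from connectedness alone.
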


\begin{proof}
The proof is similar to that of Theorem~\ref{thm:KS}, using the fact that $|\Sigma|$ is star shaped around $w$, by Lemma~\ref{lem:circompact}, and that $\sigma'$ is convex with $-w'$ in its interior to show that the non-empty intersections of $|\Sigma| \times \mathring{\sigma}'$ with lines parallel to $(w,w')$ are half-open intervals or open rays.
  \end{proof}


\begin{thebibliography}{SGAIII00}
\bibitem[BBS13]{BBS}
K.~Behrend, J.~Bryan and B.~Szendr{\H o}i.
\newblock Motivic degree zero Donaldson-Thomas invariants.
\newblock {\em Invent. Math.}, 192(1):111--160, 2013.


\bibitem[Bi05]{bittner}
F.~Bittner.
\newblock On motivic zeta functions and the motivic nearby fiber.
\newblock {\em Math. Z.}, 249(1):63--83, 2005.

\bibitem[Bo15]{borisov}
L.~Borisov.
\newblock Class of the affine line is a zero divisor in the Grothendieck ring.
\newblock Preprint, arXiv:1412.6194.

\bibitem[BN16]{BuNi}
E.~Bultot and J.~Nicaise.
\newblock Computing motivic zeta functions on log smooth models.
\newblock Preprint, arXiv:1610.00742.

\bibitem[DM15]{DM-quiver}
B.~Davison and S.~Meinhardt.
\newblock Motivic DT-invariants for the one loop quiver with potential.
\newblock {\em Geom.~Topol.}, 19:2535--2555, 2015.

\bibitem[DM17]{DM}
B.~Davison and S.~Meinhardt.
\newblock The motivic Donaldson-Thomas invariants of (-2) curves.
\newblock To appear in {\em Algebra {\&} Number Theory}, arXiv:1208.2462v3.



\bibitem[DL01]{DL}
J.~Denef and F.~Loeser.
\newblock Geometry on arc spaces of algebraic varieties.
\newblock In: {\em {E}uropean {C}ongress of {M}athematics, {V}ol. I (Barcelona, 2000)}. Vol.~201 of {\em Progr. Math.}, Birkh\"auser, Basel, pages 327--348, 2001.


\bibitem[Gu11]{gubler}
W.~{Gubler}, \emph{{A guide to tropicalizations}}, {Algebraic and combinatorial
  aspects of tropical geometry. Proc. of CIEM workshop in Castro Urdiales,
  2011},  Contemp.\ Math., vol. 589, AMS, 2013, pp.~125--189.

\bibitem[GLM06]{GLM}
G.~Guibert, F.~Loeser and M.~Merle.
\newblock Iterated vanishing cycles, convolution, and a motivic analogue of a conjecture of Steenbrink.
\newblock {\em Duke Math. J.},  132:409--457, 2006.

 \bibitem[HK06]{HK}
E.~Hrushovski and D.~Kazhdan, \emph{Integration in valued fields}. In: Algebraic
  geometry and number theory. Vol.~253 of {\em Progr. Math.} Birkh\"auser Boston,
  Boston, MA, 2006, pp.~261--405.

\bibitem[HL15]{HL}
E.~Hrushovski and F.~Loeser,
\newblock Monodromy and the {L}efschetz fixed point
  formula.
  \newblock {\em Ann. Sci. \'Ec. Norm. Sup\'er. (4)}, 48(2):313--349, 2015.

\bibitem[KS08]{KS}
M.~Kontsevich and Y.~Soibelman.
\newblock Stability structures, motivic
Donaldson-Thomas invariants and cluster
transformations.
\newblock Preprint, arXiv:0811.2435v1.

\bibitem[Le15]{thuong}
Q.~Th.~L\^e,
\newblock Proofs of the integral identity conjecture over algebraically closed fields.
\newblock {\em Duke Math. J.}, 164(1):157--194, 2015.

\bibitem[LQ11]{LQ}
M.~Luxton and Z.~Qu,
\newblock Some results on tropical compactifications.
\newblock	{\em Trans. Amer. Math. Soc.}, 363(9):4853--4876, 2011.

 \bibitem[Ni13]{Ni-tame}
J.~Nicaise.
\newblock {Geometric criteria for tame ramification.}
\newblock {\em Math. Z.}, 273(3):839--868, 2013.

\bibitem[Ni17]{Ni-HK}
J.~Nicaise.
\newblock Geometric invariants of non-archimedean semialgebraic sets.
\newblock To appear in the Proceedings of the 2015 AMS Research Institute on Algebraic Geometry, arXiv:1603.08732.

\bibitem[NPS16]{NPS}
J.~Nicaise, S.~Payne and F.~Schroeter.
\newblock Tropical refined curve counting via motivic integration.
\newblock Preprint, arXiv:1603.08424.

\bibitem[NS11]{NiSe-K0}
J.~Nicaise and J.~Sebag.
\newblock{The Grothendieck ring of varieties.} In: R.~Cluckers,
J.~Nicaise and J.~Sebag (editors). {\em Motivic integration and its
interactions with model theory and non-archimedean geometry.}
 Vol.~383 of London Mathematical Society Lecture Notes Series, pages 145--188. Cambridge
University Press, 2011.


\bibitem[Te07]{tevelev}
J.~Tevelev.
\newblock Compactifications of subvarieties of tori.
\newblock {\em Amer. J. Math.}, 129(4):1087--1104, 2007.

 \end{thebibliography}
\end{document}